\newtheorem{remark}[theorem]{Remark}
\DeclareMathOperator\erf{erf}
\DeclareMathOperator\erfc{erfc}
\newcommand{\be}{\begin{equation}}
\newcommand{\ee}{\end{equation}}
\newcommand{\ba}{\begin{aligned}}
\newcommand{\ea}{\end{aligned}}
\newcommand{\x}{\boldsymbol{x}}
\newcommand{\y}{\boldsymbol{y}}
\newcommand{\bc}{\boldsymbol{c}}
\newcommand{\bk}{\boldsymbol{k}}
\newcommand{\bm}{\boldsymbol{m}}
\newcommand{\bn}{\boldsymbol{n}}
\newcommand{\bq}{\boldsymbol{q}}
\newcommand{\psic}{{\psi_0^c}}
\newcommand{\hpsic}{{\hat{\psi}_0^c}}
\newcommand{\bxi}{\boldsymbol{\xi}}
\newcommand{\cf}{\mathcal{F}}
\newcommand{\cd}{\mathcal{D}}
\newcommand{\bfa}{\boldsymbol{a}}
\newcommand{\bff}{\boldsymbol{f}}
\newcommand{\bi}{\boldsymbol{i}}
\newcommand{\bj}{\boldsymbol{j}}
\newcommand{\br}{\boldsymbol{r}}
\newcommand{\acron}{DMK }
\newcommand{\cR}{r}
\def\ccm{Center for Computational Mathematics, Flatiron Institute, Simons Foundation,
  New York, New York 10010}
\def\nyu{Courant Institute of Mathematical Sciences,
  New York University, New York, New York 10012}
\def\papertitle{A dual-space multilevel kernel-splitting framework
for discrete and continuous convolution}
\title{\papertitle}
\author{Shidong Jiang\thanks{\ccm\,
    ({\tt sjiang@flatironinstitute.org})}
  \and
Leslie Greengard%
  \thanks{\ccm\, \& \nyu\,
    ({\tt greengard@courant.nyu.edu}).}}
\begin{document}

\maketitle

\begin{abstract}
We introduce a new class of multilevel, adaptive, dual-space 
methods for computing fast convolutional transforms.
These methods can be applied to a broad class of 
kernels, from the Green's functions for classical 
partial differential equations (PDEs) to power functions and radial basis
functions such as those used in statistics and machine learning. 
The \acron ({\em dual-space multilevel kernel-splitting}) framework 
uses a hierarchy of grids, computing a smoothed interaction at the 
coarsest level, followed by a sequence of corrections at finer and finer scales
until the problem is entirely local, at which point direct summation is applied. 

Unlike earlier multilevel summation schemes, \acron exploits the fact that
the interaction at each scale
is diagonalized by a short Fourier transform, permitting the use of separation of 
variables, but without relying on the FFT. This requires careful attention to the
discretization of the Fourier transform at each spatial scale.
Like multilevel summation, we make use of a recursive (telescoping) decomposition of the
original kernel into the sum of a smooth far-field kernel, a sequence of difference kernels, and 
a residual kernel, which plays a role only at leaf boxes in the adaptive tree.
At all higher levels in the grid hierarchy,
the interaction kernels are designed to be smooth in both physical 
and Fourier space, admitting efficient Fourier spectral approximations.
The DMK framework substantially simplifies the algorithmic 
structure of the fast multipole method (FMM) and unifies the FMM, Ewald summation,
and multilevel summation, achieving speeds comparable to the FFT in work per
gridpoint, even in a fully adaptive context. 

For continuous source distributions, the evaluation of local interactions is 
further accelerated by approximating the kernel at the finest level 
as a sum of Gaussians with a highly localized remainder. The Gaussian
convolutions are calculated using tensor product transforms, and the remainder
term is calculated using asymptotic methods.  We illustrate the performance 
of \acron for both continuous and discrete sources with extensive numerical examples in two 
and three dimensions.
\end{abstract} 

\begin{keywords}
  kernel split, dual space, diagonal translation,
  fast algorithms, multigrid, fast multipole method,
  prolate spheroidal wave functions,
  radially symmetric kernels
\end{keywords}

\begin{AMS}
31A10, 65F30, 65E05, 65Y20
\end{AMS}

\pagestyle{myheadings}
\thispagestyle{plain}
\markboth{S. Jiang and L. Greengard}
{A dual-space multilevel kernel-split framework for fast transforms}


%
%
%

\section{Introduction}\label{intro}

Many problems in computational science involve the  
evaluation of discrete sums of the form
\be\label{discretesum}
u_i=\sum_{j=1}^{N}K(\x_i-\y_j)\rho_j, \qquad i=1,\ldots,N,
\ee
or continuous convolutions of the form
\be\label{volumepotential}
u(\x)=\int_B K(\x-\y)\rho(\y)d\y,
\ee
where $\x, \y\in \mathbb{R}^d$ and $B$ is a rectangular box in $\mathbb{R}^d$.
In the present paper, we restrict our attention to kernels 
$K(\x)$ which are not highly oscillatory.
Examples of such kernels include many of the Green's functions governing 
classic partial differential equations, 
the Matern kernels in statistics
and machine learning, the general power function $1/|\x-\y|^\alpha$,
and the radial basis functions used for function
approximation from point clouds of data.
For both the discrete and continuous cases, we will refer to $\rho$ as the charge 
and to $u$ as the potential. 
Since this class of problems is ubiquitous, the 
corresponding literature is vast.
We will not attempt a comprehensive review here, but simply point out that
there are, broadly speaking, two classes of fast algorithms
for such problems: (a) tree-based methods such as the fast multipole 
method (FMM) and its variants
\cite{carrier1988sisc,fmm2,fmm3,fmm4,greengard1987jcp,fmm6,rokhlin1985jcp,
fmm7,zhang2011jcp}
or multilevel summation
\cite{brandt1990jcp,brandt1998,hardy2009pc,multilevel_summation_2015,
multilevel_summation_bspline,tensor_multilevel_ewald,skeel2002},
and (b) uniform grid-based methods that rely on the 
fast Fourier transform (FFT) such as Ewald summation
(see, for example, 
\cite{darden1993jcp,HE-1981,shamshirgar2021jcp,shaw_gaussian_ewald,TOUKMAJI199673}).
The tree-based methods have the advantage of permitting adaptive discretization
in the case of either continuous or discrete sources, and can achieve linear 
scaling.
The FFT-based methods, on the other hand, achieve $O(N \log N)$ complexity for 
more or less uniform
discretizations with a small constant prefactor implicit in the
$O(N \log N)$ notation. They are typically preferred in applications 
that do not require adaptivity because of their ease of implementation and the wide
availability of high-performance packages such as FFTW \cite{fftw97,fftw98}.
In the discrete setting, the FFT-based methods can be understood in one of two
ways: (a) mollification of the 
given source distribution, using the FFT to diagonalize 
the convolution, followed by a local correction step, or
(b) splitting of the kernel $K(\x)$ into a smooth far-field component and a 
singular near-field component  (Ewald's original approach). 
Both approaches can also be recast as applications of
the nonuniform fast Fourier transform (NUFFT) 
\cite{finufft,nufft2,nufft3,nufft6,potts2004}.
Finally, we should also note that a ``multi-level Ewald" method was proposed in 
\cite{multilevel_ewald}, aimed primarily at achieving better 
parallel FFT performance.

The \acron method developed here
draws on all of the ideas mentioned above, as well as 
multiresolution methods 
\cite{beylkin2009jcp,beylkinmohl2002,beylkin2010acha,harrisonbeylkin2004}.
It combines hierarchical, tree-based kernel-splitting with spatially localized Fourier 
transforms and leads to {\em adaptive} methods with $O(N)$ complexity.
While it is described in detail below, we summarize the novel 
features that make it faster and more general than prior approaches:

\vspace{.2in}

\begin{enumerate}
\item Unlike Ewald methods, \acron {\em does not} depend on the FFT for its asymptotic
complexity.
(The lengths of the transforms used internally are independent of $N$, and 
sufficiently short that all such calculations could be carried out directly.)
\item Unlike Ewald methods, it is fully adaptive.
\item Like Ewald methods, and unlike previous multilevel summation methods, Fourier
analysis is exploited to diagonalize the convolutions needed at each level.
\item Like Ewald methods, at the finest level in the adaptive tree, 
the interaction kernels are local and compactly supported. 
\item Unlike fast multipole methods (FMMs), \acron does not separate the 
near and far fields, and does not rely on properties of the governing partial 
differential equation for compression.
For readers familiar with the details of FMMs, the ``interaction list" processing 
in \acron is much simpler - requiring only communication with near neighbors 
at each level in the tree hierarchy.
\item Like fast multipole methods, \acron involves a single upward pass and
a single downward pass over an adaptive tree hierarchy.
\item Unlike earlier multiresolution approaches, the kernel splitting in \acron
involves only a single convolution kernel at each level in the tree hierarchy.
\item Unlike earlier multilevel summation methods, \acron
uses Fourier analysis to diagonalize interactions at every level.
\end{enumerate}

\vspace{.2in}

In short, \acron blends the algorithmic structure of the FMM and
multilevel methods with that of Ewald summation by combining
hierarchical kernel-splitting with localized, spatially adaptive Fourier
convolution.
At each level, a different degree of smoothing is applied:
the longest range interactions are accounted for 
at the coarsest levels, where the smoothing is greatest. Corrections are then
computed on successively finer levels, corresponding to sharper
but more localized features. Detailed Fourier analysis of these
localized kernels shows that, for any fixed precision, 
a modest number of Fourier modes is needed at each scale, permitting an efficient,
separable representation of the interaction.
It is worth noting that,
from the viewpoint of numerical linear algebra, the FMM and its descendants
(including ${\mathcal H}$-matrices and skeletonization-based schemes) 
\cite{fmm3,hackbusch1999,hackbusch2002,ho2012fast,minden:2017a:multiscale-model-simul}
carry out hierarchical low-rank compression {\em directly} on the given 
interaction matrix with entries $K(\x_i-\y_j)$.
For singular, nonoscillatory kernels, this leads to the requirement that source and
target boxes be ``well-separated" in order for the sub-blocks of the 
interaction matrix to be sufficiently low rank. 
In the \acron framework (and in multilevel summation methods more generally), 
the interaction matrix
is split into a {\em sum} of matrices - one for each spatial scale - 
that are dense (but low-rank)
at coarse levels and sparse at finer levels. 

We refer to \acron as a {\em framework} since the structure of the algorithm
is largely kernel- and dimension-independent,
using only generic, tensor-product Fourier convolution. The details, however, are
kernel-specific, requiring some analysis in both the Fourier transform domain
and physical space.  Numerical experiments show that \acron leads to 
implementations that are as fast or faster than state-of-art codes, such
as the PVFMM~\cite{pvfmm} and FMM3D~\cite{fmm3d} libraries for 
translation-invariant Green's functions,
and that they {\em retain the same speed} for more general kernels, unlike earlier
FMM variants such as those in \cite{fmm3,fmm4}.

The paper is organized as follows. 
In section 2, we summarize some of mathematical prerequisites.
In section 3, we provide a detailed description of an adaptive \acron
algorithm for the Poisson equation in three dimensions.
In section 4, we show how to extend the \acron approach to several other kernels,
and in section 5, we present the results of numerical experiments.
In section 6, we discuss some potential algorithmic improvements 
and extensions of \acron to other problems in computational physics. 
\section{Mathematical preliminaries and notation}

We begin by summarizing the main mathematical tools and definitions to
be used throughout the paper, with some material relegated to
appendix A.
For a point $\x \in \mathbb{R}^d$, we will denote its magnitude by 
$r=|\x|$.  Points in the Fourier transform domain will be denoted by 
$\bk$ with magnitude $k=|\bk|$. When a function $f$ is radially symmetric,
we will use $f(r)$ and $f(\x)$ interchangeably.

We will make use of multi-index notation. That is, for a $d$-tuple
of integers (a multi-index)
of the form
$\bn=(n_1,\ldots,n_d)$, 
a point $\x = (x_1,\dots,x_d)$ in $\mathbb{R}^d$, and a scalar
function of one variable $T_n(x)$, we define
$T_{\bn}(\x)=\prod_{i=1}^d T_{n_i}(x_i)$ and
\[ 
 \sum_{\bn \in [1,\dots,p]^d} C_{\bn} T_{\bn}(\x) \equiv
 \sum_{n_1 = 1}^{p} 
 \dots \sum_{n_d = 1}^{p} 
C_{n_1,\dots,n_d} T_{n_1-1}(x_1) \cdots T_{n_d-1}(x_d).
\]
We will permit the multi-index to take on negative values. For example,
we will make extensive use of formulas such as
\[ 
 \sum_{\bm \in [-p,\dots,p]^d} w_{\bm} e^{i \bk_{\bm} \cdot \x} \equiv
 \sum_{m_1 = -p}^{p} 
 \dots \sum_{m_d = -p}^{p} 
w_{m_1,\dots,m_d} e^{i \bk_{\bm} \cdot \x},
\]
where $\bk_{\bm} = (k_{m_1},\dots,k_{m_d})$.

\subsection{Interpolation} \label{sec:interp}

Let $f(x)$ be a smooth function on the interval $[-1,1]$.
Then it is well-known to have a rapidly converging Chebyshev series
\cite{boyd2000,trefethenatap}
\[ 
f(x) \approx \sum_{n=1}^{p} a_n T_{n-1}(x).
\]
The Chebyshev polynomials \cite{boyd2000,trefethen2000} can be defined by 
the recurrence relations
\[ T_0(x) = 1,\quad T_1(x) = x,\quad T_{n+1}(x) = 2x T_n(x) - T_{n-1}(x).  \]
The {\em Chebyshev nodes of the first kind} are the zeros of $T_p(x)$, given by 
\[ 
\{ r_i =  \cos \left[  \frac{\pi (i-\frac{1}{2})}{p} \right], i = 1,\dots,p \}.
\]
We define the $p \times p$ ``Vandermonde-like'' matrix $V$ by $V(i,j) = T_{j-1}(r_i)$
\cite{higham1990}
Given the vector of function values $\bff = (f(r_1),f(r_2),\dots,f(r_p))$,
the coefficients of the Chebyshev interpolant can be obtained as
\[ \bfa = V^{-1} \bff ,  \]
where $\bfa = (a_1,\dots,a_p)$.

Given a set of $N$ additional points $\{ -1 \leq \xi_k \leq 1 |\ k = 1,\dots,N \}$, we define
the $N \times p$ {\em evaluation} matrix $E$ by $E(k,n) = T_{n-1}(\xi_k)$, so that
$E \, \bfa$ is the value of the interpolant at the additional points.
The mapping from function values 
at the Chebyshev nodes $\{r_j\}$ to function values at the additional 
points $\{ \xi_k \}$ is clearly given
by  the {\em interpolation matrix} $U = E \, V^{-1} \in \mathbb{R}^{N\times p}$. 

In $d$ dimensions, we denote by 
$\{ \br_{\bi} \}$ the tensor product Chebyshev nodes. That is,
$\br_{\bi} = (r_{i_1},\dots,r_{i_d})$ for the multi-index $\bi$.
We will continue to 
denote the corresponding $p^d \times p^d$ matrix by $V$ and
write $V(\bi,\bj) = T_{\bj-{\bf 1}}(\br_{\bi})$ for Chebyshev interpolation.
Here, the multi-index ${\bj-{\bf 1}} = (j_1-1,\dots,j_d-1)$.
Given the function values $\bff = f(\br_{\bj})$ for $\bj \in [1,\dots,p]^d$, the 
coefficients of the Chebyshev interpolant are given by 
\[ \bfa = V^{-1} \bff  \]
where we are omitting the details of ``unrolling" the multi-indices and
$\bfa = \{ a_{\bi} | \, \bi \in [1,\dots,p]^d \}$.
For $N$ additional points $\{ \bxi_k \in [-1,1]^d |\ k = 1,\dots,N \}$, 
the $N \times p^d$ evaluation matrix $E$ is given by $E(k,\bn) = T_{\bn-{\bf 1}}(\bxi_k)$
and the interpolation matrix by 
$U = E \, V^{-1} \in \mathbb{R}^{N\times p^d}$. 

If the targets themselves lie on a tensor product grid with multi-index $\bq$, we will
write $E(\bq,\bn)$ for the corresponding $q^d \times p^d$ evaluation matrix. The 
interpolation matrix is again defined by
\be \label{Udefnd}
U = E \, V^{-1} \in \mathbb{R}^{q^d \times p^d}. 
\ee

\subsection{Anterpolation} \label{sec:anterp}

Suppose now that, in one dimension, 
we have a potential function expressed in terms of some smooth kernel $K(x)$ of the form
\be
u(x) = \sum_{j=1}^N \rho_j K(x-x_j),
\ee
with $x_j \in [-1,1]$.
It is convenient to adopt the language of electrostatics, and we will
refer to $\rho_j$ as a {\em charge} located at $x_j$. 
Since $K(x)$ is a smooth function, viewing $x$ as fixed, we may write
\be
u(x) \approx \sum_{j=1}^N \rho_j \left[ \sum_{i=1}^p K(x-r_i) U(j,i) \right],
\ee
where $U$ is the interpolation matrix mapping from the Chebyshev nodes $\{r_i\}$ to the
points $\{ x_j \}$.
Changing the order of summation, we may write
\be
u(x) \approx \sum_{i=1}^p \tilde{\rho}_i K(x-r_i),
\ee
with 
\be \label{proxy1d} 
\tilde{\rho}_i = \sum_{j=1}^N U^T(i,j) \rho_j.
\ee

\begin{definition}\label{anterpdef}
The transpose of the interpolation matrix, $U^T = V^{-T} \, E^T$ is generally
referred to as 
the ``anterpolation" matrix. The charges $\{ \tilde{\rho}_j \}$ defined
in \eqref{proxy1d} will be referred to as ``proxy" charges.
In higher dimensions, proxy charges are defined in the same way. That is,  
for a smooth potential function such as
\be \label{potfun_nd}
u(\x) = \sum_{j=1}^N \rho_j K(\x-\x_j),
\ee
we have
\be
u(\x) \approx \sum_{\bi \in [1,\dots,p]^d} \tilde{\rho}_{\bi} K(\x-\br_{\bi}),
\ee
where
\be \label{proxynd} 
\tilde{\rho}_{\bi} = \sum_{j=1}^N U^T(\bi,j) \rho_j.
\ee
\end{definition}

Anterpolation is a key step in accelerating the ``fine-to-coarse" transition
in a variety of tree-based methods  
\cite{brandt1990jcp,brandt1998,skeel2002,hardy2009pc,multilevel_summation_2015,
multilevel_summation_bspline,tensor_multilevel_ewald} and, in $d$ dimensions,
the cost of computing \eqref{proxynd} is clearly $O(N p^d)$. 
If the sources $\rho_j$ in \eqref{potfun_nd} lie on a tensor-product grid, however, 
that cost can be reduced.

\begin{lemma} \label{ctoplemma}
Suppose that the charges $\rho_{j}$ in \eqref{potfun_nd} 
lie on a tensor product Chebyshev grid contained within $[-1,1]^d$. Then the mapping
\be \label{proxytensor} 
\tilde{\rho}_{\bi} = \sum_{\bj \in [1,\dots,p]^d} U^T(\bi,\bj) \rho_{\bj}
\ee
can be computed in $O(p^{d+1})$ operations, where 
$U$ is defined in \eqref{Udefnd}.
\end{lemma}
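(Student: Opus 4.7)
The plan is to exploit the tensor-product structure that both the source grid and the Chebyshev proxy grid inherit from being tensor products of one-dimensional Chebyshev grids. Concretely, I would first show that when the target points $\{\bxi_k\}$ themselves form a tensor-product Chebyshev grid $\{\br_{\bq}\}$ (with $q$ nodes per dimension, say), the evaluation matrix factorizes as a Kronecker product
\[
E = E_1 \otimes E_2 \otimes \cdots \otimes E_d,
\]
where each $E_\ell$ is the one-dimensional evaluation matrix at the $q$ Chebyshev nodes. This is immediate from the multiplicative definition $T_{\bn-\mathbf 1}(\br_{\bq}) = \prod_{\ell=1}^d T_{n_\ell-1}(r_{q_\ell})$. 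Similarly, the Vandermonde-like matrix $V$ factorizes as $V = V_1 \otimes \cdots \otimes V_d$ with each $V_\ell$ being the one-dimensional Vandermonde-like matrix, and hence
\[
U = E\,V^{-1} = (E_1 V_1^{-1}) \otimes \cdots \otimes (E_d V_d^{-1}) = U_1 \otimes \cdots \otimes U_d,
\]
so that $U^T = U_1^T \otimes \cdots \otimes U_d^T$.

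Next, I would invoke the standard fact that applying a Kronecker-product operator to a tensor of size $p^d$ can be done dimension by dimension. Writing $\rho_{\bj}$ as a $d$-way array indexed by $\bj = (j_1,\dots,j_d)$, the anterpolated array is
\[
\tilde\rho_{\bi} = \sum_{j_1}\cdots\sum_{j_d} U_1^T(i_1,j_1)\cdots U_d^T(i_d,j_d)\,\rho_{j_1,\dots,j_d},
\]
and each partial sum (contraction along one of the $d$ axes) can be computed by applying $U_\ell^T$ to every $p$-vector fiber of the array in that direction. A single sweep costs $O(p) \cdot p^{d-1} \cdot p = O(p^{d+1})$ operations (there are $p^{d-1}$ fibers of length $p$, and multiplying each by a $p\times p$ matrix costs $p^2$ operations). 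Performing the sweep once in each of the $d$ coordinate directions then yields the full result at total cost $O(d\,p^{d+1}) = O(p^{d+1})$ for fixed $d$.

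The main obstacle is really just being careful about the Kronecker factorization in step one: one must check that tensor-product Chebyshev interpolation on a tensor-product target grid factorizes exactly, which relies on the definition $T_{\bn}(\x) = \prod_\ell T_{n_\ell}(x_\ell)$ and on $V$ inheriting the same tensor-product structure (so that $V^{-1}$ does as well via $(A\otimes B)^{-1} = A^{-1}\otimes B^{-1}$). Once that observation is in place, the complexity count is the textbook sum-factorization argument and the $O(p^{d+1})$ bound follows immediately.
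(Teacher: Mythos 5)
Your proof is correct and is exactly the "separation of variables" argument that the paper invokes (without spelling it out) immediately after the lemma statement: the paper observes that the Chebyshev nodes form a tensor product grid and appeals to separation of variables, which is precisely the Kronecker factorization $U^T = U_1^T \otimes \cdots \otimes U_d^T$ and dimension-by-dimension contraction you describe. Your write-up simply supplies the details the paper leaves implicit, with the same $O(d\,p^{d+1}) = O(p^{d+1})$ count.
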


The proof follows from a straightforward application of 
separation of variables, since the 
Chebyshev nodes $\{\br_{\bi} \}$ are themselves on a tensor product grid. 
(\Cref{ctoplemma} is used in
\cref{sec:afemmodifications} below
to merge the proxy charges from child boxes to a smaller set of proxy
charges for their parent.)

Given a smooth function on the box $B = [-1,1]^d$, we will also need to shift the
corresponding Chebyshev expansion to a box $C$ of width $1$ centered at
$\bc = (\pm \frac{1}{2},\dots,\pm \frac{1}{2})$. (In the context of hierarchical
tree-based solvers, one can think of $B$ as a parent box and $C$ as one of its 
$2^d$ children.)

\begin{lemma} \label{ptoclemma}
Suppose that we are given a tensor-product polynomial 
on a box $B$ of the form
\[ u(\x) =
\sum_{\bn \in [1,\dots,p]^d} \alpha_{\bn} T_{\bn-{\bf 1}}(\x),
\]
where the Chebyshev polynomials are centered at the center of $B$ and scaled to the
box size.  Then, in a child box $C \subset B$,  
\[ u(\x) =
\sum_{\bn' \in [1,\dots,p]^d} \beta_{\bn'} T'_{\bn'-{\bf 1}}(\x),
\]
where the Chebyshev polynomials $T'_{\bn'-{\bf 1}}(\x)$ are
centered on $C$ and scaled to its size.
This translation is exact and 
\be \label{ptoc} 
\beta_{\bn'} = V^{-1} E \, \alpha_{\bn}
\ee
where $E(\bk,\bn) = T_{\bn-{\bf 1}}(\br'_{\bk})$ is the evaluation matrix
entry for scaled Chebyshev node $\br'_{\bk}$ on $C$ and $V$ is the
(multi-index) Vandermonde matrix. The mapping in
\eqref{ptoc} can be computed in $O(p^{d+1})$ operations.
\end{lemma}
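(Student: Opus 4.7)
The plan is a two-step derivation (values, then coefficients) followed by a tensor-product cost count. The starting observation is that $u$ is, by hypothesis, a tensor-product polynomial of coordinate degree at most $p-1$, so its restriction to any sub-box $C \subset B$ is again such a polynomial and admits a unique exact expansion in the shifted, rescaled Chebyshev basis $\{T'_{\bn'-\mathbf{1}}\}$ on $C$.

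First I would extract the $\beta_{\bn'}$ by interpolation at the child's Chebyshev nodes $\{\br'_{\bk}\}$. Evaluating the parent expansion at those nodes produces the vector of function values $\bff = E\,\alpha$, where $E(\bk,\bn) = T_{\bn-\mathbf{1}}(\br'_{\bk})$ is exactly the evaluation matrix named in the lemma. Next, since $\{\br'_{\bk}\}$ are themselves the Chebyshev nodes of $C$ and $\{T'_{\bn'-\mathbf{1}}\}$ is the Chebyshev basis of $C$, the values-to-coefficients map on $C$ is governed by the very same Vandermonde-like matrix $V$ introduced in \Cref{sec:interp}. This is the one nontrivial bookkeeping point: under the affine map that sends $C$ back to $[-1,1]^d$, the child nodes pull back to the reference Chebyshev nodes and the scaled basis pulls back to the reference Chebyshev basis, so $T'_{\bn'-\mathbf{1}}(\br'_{\bk}) = T_{\bn'-\mathbf{1}}(\br_{\bk})$ independently of which child $C$ one picks. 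Therefore $\beta = V^{-1} \bff = V^{-1} E\,\alpha$, which is precisely \eqref{ptoc}.

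For the complexity, the key point is that both $V^{-1}$ and $E$ inherit a Kronecker structure from the tensor-product nodes. In one dimension let $V_1 \in \mathbb{R}^{p\times p}$ be $V_1(i,j) = T_{j-1}(r_i)$, and for each coordinate direction $m$ let $E_1^{(m)} \in \mathbb{R}^{p\times p}$ be $E_1^{(m)}(k,n) = T_{n-1}(c_m + \tfrac{1}{2} r_k)$, where $c_m = \pm\tfrac{1}{2}$ is the center of $C$ in coordinate $m$. Then
\[
E = E_1^{(1)} \otimes \cdots \otimes E_1^{(d)}, \qquad V^{-1} = V_1^{-1} \otimes \cdots \otimes V_1^{-1}.
\]
Applying each Kronecker factor to the reshaped $p\times\cdots\times p$ coefficient tensor is a mode-$m$ contraction of cost $p \cdot p^d = p^{d+1}$; performing the $2d$ such contractions keeps the total at $O(p^{d+1})$.

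The proof is essentially a separation-of-variables argument identical in spirit to \Cref{ctoplemma}; the only mildly subtle ingredient is the identification of the child's values-to-coefficients matrix with the same reference $V$, which I expect to be the main point to state carefully rather than a genuine obstacle.
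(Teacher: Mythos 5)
Your proof is correct and takes essentially the same approach as the paper, which only sketches the argument ("exactness is obvious since we are shifting the center of a polynomial; the cost reduction follows from separation of variables"). You have simply filled in the two omitted steps — the affine-pullback identification of the child's values-to-coefficients matrix with the reference $V$, and the explicit Kronecker/mode-contraction cost count — both of which are done correctly.
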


The exactness of the translation is obvious since we are simply shifting the center
of a polynomial in $d$ variables. The reduction in computational cost from 
the naive estimate $O(p^{2d})$ to $O(p^{d+1})$ follows from
separation of variables. 

\begin{remark} \label{polyerror}
The order of polynomial approximation to be used 
depends on the desired precision of the calculation and the smoothness of the kernel
in \eqref{potfun_nd}.
Using Chebyshev interpolation, we have the 
standard error estimate \cite{powell1981,trefethenatap}
\[ |f(x) - P_{n-1}(x)| \leq \frac{1}{2^n \, n!} \left( \frac{b-a}{2} \right)^n 
\| f^{(n)} \|_\infty  \, .
\]
Below, we will focus on the systematic use of band-limited (or approximately 
band-limited) functions, whose Fourier transform is supported in
$[-K_{\rm max},K_{\rm max}]$, which satisfy 
\[ \| f^{(n)}\|_\infty \leq K_{\rm max}^n \| f \|_\infty  \, .
\]
If the approximation is used on an interval of size $1/K_{\rm max}$, then clearly
\[ |f(x) - P_{n-1}(x)| = O \left( \frac{1}{2^n \, n!} \right)
\| f \|_\infty  \, ,
\]
indicating rapid convergence with $n$, and {\em independent of $K_{\rm max}$}. 
\end{remark}

\subsection{The Fourier transform and its properties} \label{sec:ft}

We define the Fourier transform of $F$ by
\be
\label{fourierdef}
\widehat{F}(\bk) = \int e^{-i\bk\cdot \x} F(\x) d\x,
\ee
for $\x, \bk\in \mathbb{R}^d$.
The function $F(\x)$ can be recovered from the inverse transform
\be
\label{fourierinvdef}
F(\x) =\frac{1}{(2\pi)^d} \int e^{i\bk\cdot \x} \widehat{F}(\bk) d\bk.
\ee
Some well-known properties of the Fourier transform are
\cite{dym1975,trefethen2000}:

\vspace{.1in}

\begin{enumerate}[label=(\alph*)]
\item 
If $c\in \mathbb{R}$ with $c\ne 0$, then
  $\mathcal{F}\left\{f\left(\frac{\x}{c}\right)\right\}(\bk)=|c|^d\hat{f}(|c|\bk)$.
\hfill
({\em Dilation}) 
\item 
$\mathcal{F}\left\{\Delta^\alpha f(\x)\right\}(\bk) = -|\bk|^{2\alpha}\hat{f}(\bk)$.
\hfill
({\em Differentiation})
\item 
$f(\x)$ is smooth if and only if $\hat{f}(\bk)$ decays rapidly as $k\rightarrow \infty$.
\hfill ({\em Duality}) 
\item
The Poisson summation formula \cite{dym1975} states that
\end{enumerate}
\be\label{poissonsummation}
\sum_{n=-\infty}^\infty f\left(x+\frac{2\pi n}{h}\right) = \frac{h}{2\pi}\sum_{m=-\infty}^\infty
\hat{f}(mh)e^{imhx}.
\ee
This holds for a broad class of functions, and extends to distributions such
as the Dirac delta function.

\section{\acron for the 3D Laplace kernel}

In this section, we consider the free-space Green's function for the Laplace equation
in three dimensions, and begin with a review
of how classical (single level) Ewald summation
\cite{HE-1981,darden1993jcp,shamshirgar2021jcp})
can be applied to the calculation of the discrete sum
\be\label{laplacesum}
u_i=\sum_{\substack{j=1}}^{N}\frac{1}{|\x_i-\y_j|}\rho_j, \qquad i=1,\ldots,N,
\ee
where (for the moment) we assume the targets $\{ \x_i \}$ are disjoint from
the sources $\{ \y_j \}$.
We then show how to create, using multilevel kernel-splitting, a
fast and fully adaptive variant of the method with 
$O(N \log N)$ complexity. Simple modifications of the method yield an even more
efficient $O(N)$ method. 
In \cref{sec:continuoussources}, we show that significant
acceleration can be obtained for continuous sources at the level of leaf boxes 
by approximating the kernel using a sum-of-Gaussians combined with 
asymptotic analysis.  Finally,
in \cref{sec:betterkernelsplitting}, we show that a
further reduction in cost can be achieved using prolate spheroidal wave functions
in the kernel-splitting framework.

\subsection{Classical Ewald summation}\label{sec:l3dgaussianks}

In order to reduce the cost of the calculation \eqref{laplacesum},
Ewald \cite{ewald1921ap} began by rewriting
the 3D Laplace kernel in the form
\be\label{ewaldsplitting}
\frac{1}{r} = M(r)+R(r)\coloneqq \frac{\erf({r}/\sigma)}{r}
+ \frac{\erfc({r}/\sigma)}{r},
\ee
where $\erf$ and $\erfc$ are the error and complementary error functions
\be\label{erfdef}
\erf(x)=\frac{2}{\sqrt{\pi}}\int_0^x e^{-t^2}dt, \quad \erfc(x)=1-\erf(x),
\ee
and $\sigma$ is a free parameter, to be chosen for optimal performance later.
Note that, once $r \geq 6 \sigma$, $\erf({r}/\sigma) \approx 1$ with more than
fifteen digits of accuracy and 
$\erfc({r}/\sigma) \approx 0$.
Thus, $M(r)$ accurately represents the 
$\frac{1}{r}$ potential in the far field and $R(r)$
can be viewed as a local correction that needs to be invoked for 
$r < 6 \sigma$. 
Note also that 
the {\em mollified} far-field kernel $M(r)$ is smooth 
but slowly decaying, while
the {\em residual} kernel $R(r)$ is singular but compactly supported (to a fixed
precision).
The smoothness of $M(r)$ 
follows from the fact that the Taylor series of 
the error function $\erf(r)$ about the origin contains only odd
powers of $r$. 

Combining \cref{ewaldsplitting} and \cref{laplacesum}, we may write
\be\label{laplacesum2}
u_i= u_{i}^{\rm far}+u_{i}^{\rm local}\\
\ee
where
\[
u_{i}^{\rm far} = \sum_{j=1}^{N}M(|\x_i-\y_j|)\rho_j, \quad
u_{i}^{\rm local} = \sum_{j=1}^{N}R(|\x_i-\y_j|)\rho_j.
\]

\begin{remark}
To evaluate the field at one of the source locations, 
say $\y_{i}$,
it is physically sensible to set the 
``self-interaction" to zero. For this, we  may write
\be\label{laplacesum2self}
\ba
u(\y_{i}) &= u_{i}^{\rm self}+u_{i}^{\rm far}+u_{i}^{\rm local}\\
&\coloneqq -\frac{2}{\sqrt{\pi} \sigma}\rho_i +
\sum_{j=1}^{N}M(|\y_i-\y_j|)\rho_j + 
\sum_{\substack{j=1 \\ j\ne i}}^{N}R(|\y_i-\y_j|)\rho_j.
\ea
\ee
The first term is a ``self-interaction" correction, needed to account for
the fact that, in the smoothed far field sum,
\be
\lim_{r\rightarrow 0} M(r) = \frac{2}{\sqrt{\pi} \sigma}.
\ee
\end{remark}

The far field contribution is amenable to Fourier-based methods. 
It is straightforward to check that the Fourier
transform of the mollified far-field kernel $M(|\x|)$ is
given by
\be\label{erfoverrft}
\widehat{M}(\bk) = 4\pi \frac{e^{-k^2\sigma^2/4}}{k^2},
\ee
for $\bk \in R^3$, where $k = |\bk|$. It decays rapidly, but is 
singular at $k=0$.

\begin{remark}
In its original form, Ewald summation was 
developed for problems with periodic boundary conditions
\cite{darden1993jcp,ewald1921ap,HE-1981}.
In that setting, the far field contribution is expressed as a 
rapidly converging Fourier series, avoiding the Fourier transform 
along with issues of quadrature in the Fourier domain.
The singularity of $\widehat{F}(\bk)$ at the origin is ignored, because of 
the requirement of charge neutrality in the unit box for periodic problems. 
In recent years, however, Ewald methods have been extended 
to address problems with either periodic or free-space boundary conditions
in any coordinate direction.
See \cite{bagge2022,klinteberg2017,shamshirgar2021jcp} for a detailed discussion.
\end{remark}

In this paper, we restrict our attention to
the free-space problem and modify the original
Ewald method accordingly, assuming only that
the point locations $\{ \x_i, \y_i | \ i=1,\ldots,N \}$  have been 
rescaled and centered so that they lie 
in the unit box $[-1/2,1/2]^d$.

Since we will compute the far field interactions via the Fourier transform,
it is convenient to replace the 
kernel $M(|\x|)$ with a {\em windowed} kernel, which we define as
\be
\label{windowedkerps}
W(\x)=\frac{1}{(2\pi)^3} \int_{\bk \in \mathbb{R}^3} 
e^{i\bk\cdot \x} \widehat{W}(\bk) \, d\bk, 
\ee
where
\be\label{windowedkernelft}
\widehat{W}(\bk) = 
8\pi \left(\frac{\sin(\tilde{C}|\bk|/2)}{|\bk|}\right)^2 e^{-|\bk|^2\sigma^2/4},
\ee
a $C^{\infty}$ function, depending on a parameter $\tilde{C}$.
This makes discretization very simple, since the 
integrand in \eqref{windowedkerps} is
smooth and exponentially decaying, so that the tensor
product trapezoidal rule yields spectral accuracy 
(see, for example, \cite{trefethentrap}). It is only permissible to do this,
of course, if the substitution of $W(\x)$ for $M(\x)$ yields the same result,
as established in \cref{trunc_lemma}. 

\begin{remark} \label{vicoremark}
The elimination of the singularity in 
$\widehat{M}(\bk)$ at the origin in \cref{erfoverrft} by 
applying a sharp window in physical space was developed systematically in
\cite{vico2016jcp}, where it was applied to a variety of constant coefficient PDEs
with smooth, compactly supported right-hand sides. The basic observation (for the 
Laplace kernel) is that the kernel $1/r$ truncated at a distance $\tilde{C}$
has the smooth Fourier transform
\[
8 \pi \left(\frac{\sin(\tilde{C}|\bk|/2)}{|\bk|}\right)^2 .
\]
Here, we apply this approach to the mollified kernel $M(|\x|)$.
\end{remark}

\begin{lemma}  \label{trunc_lemma}
Let $C = \sqrt{3}$, the diameter of the unit box in three dimensions,
and let the parameter defining $\widehat{W}$ in \eqref{windowedkernelft}
be given by $\tilde{C} = C + b \sigma$. 
Then, for sources and targets in the unit box 
$[-1/2,1/2]^3$, we have
\be\label{farpart}
u_{i}^{\rm far}=\sum_{j=1}^{N}M(|\x_i-\y_j|)\rho_j
\approx \sum_{j=1}^{N}W(\x_i-\y_j)\rho_j,
\ee
with a relative error of the order $\erfc(b)$. (Setting $b=6$ yields better than
fifteen digits of accuracy.) 
\end{lemma}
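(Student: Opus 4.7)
The plan is to reduce the claim to a pointwise bound on $|M(\x)-W(\x)|$ uniformly for $\x$ in the ball of radius $C=\sqrt{3}$, since every displacement $\x_i-\y_j$ between sources/targets in $[-1/2,1/2]^3$ lies in that ball. The essential structure I want to exploit is that both $M$ and $W$ are convolutions of the \emph{same} Gaussian with a Laplace-type kernel that differ only by truncation beyond radius $\tilde C$, which will make the error an explicit tail integral.

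First, writing $G(\x) = (\sigma\sqrt{\pi})^{-3} e^{-|\x|^2/\sigma^2}$ with $\widehat{G}(\bk) = e^{-|\bk|^2\sigma^2/4}$, I would identify the formula $\widehat{M}(\bk) = (4\pi/|\bk|^2)\widehat{G}(\bk)$ in \eqref{erfoverrft} with the factorization $M = (1/|\cdot|)\ast G$. For $W$, the leading factor $8\pi\sin^2(\tilde C|\bk|/2)/|\bk|^2$ of \eqref{windowedkernelft} is precisely the Fourier transform of the truncated Laplace kernel $T(\x) = 1/|\x|$ for $|\x|\leq \tilde C$ and $0$ otherwise, as noted in \cref{vicoremark}; hence $\widehat{W} = \widehat{T}\,\widehat{G}$ and $W = T\ast G$. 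Subtracting gives the clean expression $M(\x) - W(\x) = \int_{|\y|>\tilde C} G(\x-\y)/|\y|\, d\y$.

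Next, for $|\x|\leq C$ and $|\y|>\tilde C = C+b\sigma$, the reverse triangle inequality yields $|\x-\y|\geq b\sigma$, so the integrand is confined to the region where the Gaussian is already exponentially small. Bounding $1/|\y|\leq 1/\tilde C$ and enlarging the integration domain to $\{\z:|\z|\geq b\sigma\}$, I obtain $|M(\x)-W(\x)| \leq \tilde C^{-1}\int_{|\z|\geq b\sigma} G(\z)\,d\z$. In spherical coordinates with the substitution $t=r/\sigma$, this tail equals $\frac{4}{\sqrt{\pi}}\int_b^\infty t^2 e^{-t^2}\,dt = \erfc(b) + (2b/\sqrt{\pi})e^{-b^2}$ after one integration by parts, which is of order $\erfc(b)$ up to a mild polynomial prefactor; at $b=6$ it is safely below $10^{-15}$, matching the claim.

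Finally, the sum error is controlled termwise: $|u_i^{\mathrm{far}} - \sum_j W(\x_i-\y_j)\rho_j| \leq \max_{|\x|\leq C}|M(\x)-W(\x)|\cdot \sum_j|\rho_j|$. Since $M(r)=\erf(r/\sigma)/r$ is monotonically decreasing and positive, one has $M(\x)\geq \erf(C/\sigma)/C = O(1)$ for $|\x|\leq C$ whenever $\sigma$ is not enormous compared with $C$, so dividing by any natural scale of the far-field potential gives a relative error of the advertised order $\erfc(b)$. The only real subtlety — not a mathematical obstacle but an interpretive one — is the meaning of ``relative error'' when charge cancellation makes $|u_i^{\mathrm{far}}|$ much smaller than $\sum_j|\rho_j|$; the argument above gives the strongest natural bound, which is the standard one for Ewald-type truncation estimates.
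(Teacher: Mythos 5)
Your proof is correct, but it takes a genuinely different route from the paper's. The paper proves \eqref{windowedkernel2} by evaluating the one-dimensional inverse Fourier integral of $\widehat{W}$ explicitly, rewriting $\sin^2$ via the double-angle identity, and integrating the resulting sine kernels against the Gaussian to produce a closed-form expression for $W(r)$ in terms of error functions; the bound $|W(r)-M(r)|\leq \erfc(b)/r$ then falls out by monotonicity of $\erf$. You instead work entirely in physical space: you recognize from the factored structure of $\widehat{M}$ and $\widehat{W}$ that $M = (1/|\cdot|)\ast G$ and $W = T\ast G$ for the same Gaussian $G$ and the truncated kernel $T$, so that $M-W$ is precisely the Gaussian tail integral $\int_{|\y|>\tilde C} G(\x-\y)|\y|^{-1}\,d\y$, and then you bound that tail directly. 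Your approach is conceptually leaner — once one accepts the Vico-et-al. observation quoted in \cref{vicoremark}, the convolution identity $W = T\ast G$ is immediate and no sine-integral manipulations are needed — but you pay a small price: your pointwise bound $|M(\x)-W(\x)|\leq \tilde C^{-1}\bigl(\erfc(b)+\tfrac{2b}{\sqrt{\pi}}e^{-b^2}\bigr)$ carries a polynomial prefactor in $b$ and does not track the $1/r$ structure that the paper's bound $\erfc(b)/r$ retains, although both are entirely adequate for the stated $O(\erfc(b))$ conclusion. The paper's route also yields the explicit formula \eqref{windowedkernel2} for $W(r)$ as a by-product, which is independently useful. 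Your closing remark about the interpretive ambiguity of ``relative error'' under charge cancellation is a fair point that the paper's proof does not address either; both proofs implicitly measure the error relative to the natural scale $M(r)$ of the kernel.
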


\begin{proof}
From \eqref{windowedkernelft} and the duality property 
 in
\cref{sec:ft}, we know that the kernel $W(\x) = W(|\x|)$ is a smooth radial
function.  We now show that it is given explicitly by
\be\label{windowedkernel2}
W(r)=
\frac{\erf(r/\sigma)}{r}- \frac{1}{2} 
\left(\frac{\erf((b\sigma+C+r)/\sigma)}{r}-\frac{\erf((b\sigma+C-r)/\sigma)}{r} \right).
\ee
For this, we combine
\eqref{fourierinvdef}
and \eqref{rbfft3d} to obtain
the Fourier transform relation
\be\label{inverserbfft}
F(r)=\frac{1}{2\pi^2}\int_0^\infty \frac{\sin(kr)}{kr}\hat{F}(k)k^2dk
\ee
for any radially symmetric function $F$.
Thus, 
\[
\ba
W(r)&=\frac{1}{2\pi^2}\int_0^\infty \frac{\sin(kr)}{kr}\hat{W}(k)k^2dk\\
&=\frac{4}{\pi}\int_0^\infty \frac{\sin(kr)}{kr}
\sin^2((b\sigma+C)k/2) e^{-k^2 \sigma^2/4} \, dk\\
&=\frac{2}{\pi}\int_0^\infty \frac{\sin(kr)}{kr}
(1-\cos(k(b\sigma+C))) e^{-k^2\sigma^2/4} \, dk\\
&=\frac{2}{\pi}\int_0^\infty \frac{e^{-k^2 \sigma^2/4}}{kr}
(\sin(kr)-\frac{1}{2}\sin(k(b\sigma+C+r))+\frac{1}{2}\sin(k(b\sigma+C-r)))dk.
\ea
\]
Applying \cref{inverserbfft} to the mollified kernel $M(r)$, 
and using \cref{erfoverrft}, it is straightforward to see that
\be
\erf(r/\sigma) = \frac{2}{\pi}\int_0^\infty 
\frac{e^{-k^2 \sigma^2/4}}{k}\sin(kr)dk,
\ee
and the result \eqref{windowedkernel2} follows. 
Combining \cref{ewaldsplitting} and \cref{windowedkernel2}, we have
\be
\ba
\left|W(r)-M(r)\right| &= 
\frac{\erf((b\sigma+C+r)/\sigma)-\erf((b\sigma+C-r)/\sigma)}{2r}\\
&\le \erfc(b) \frac{1}{r}, \quad r\le C,
\ea
\ee
as desired.
\end{proof}

Although we have an explicit expression for $W(\x)$, 
the calculation of \eqref{farpart} is most rapidly carried out using
Fourier convolution. 
Given $\sigma >0$ and $\epsilon < 1$, assuming
\be 
\label{Knodesdef}
K_{\rm max}^2 \sigma^2/4 \geq \log(1/\epsilon),
\ee
 we have
\be
u_{i}^{\rm far} = 
\frac{1}{(2\pi)^3}\int_{-K_{\rm max}}^{K_{\rm max}} e^{i\bk\cdot \x_i}  \widehat{W}(\bk) \,\widehat{g}(\bk) 
\, d\bk \ 
+ O(\epsilon),
\ee
where
\be\label{rhs_trans}
\widehat{g}(\bk)= \sum_{j=1}^{N} e^{-i \bk \cdot y_j} \rho_j.
\ee

It is easy to see that
the number of oscillations of the integrand 
in each dimension is at most $O((C+\tilde{C})K_{\rm max}/(2\pi))
= O(\sqrt{ \log(1/\epsilon)}/\sigma)$.
Thus, once the total number of quadrature nodes  $N_F = (2n+1)^3$, 
with $n \approx K_{\rm max}$ (Nyquist sampling), the error in
\be\label{truncatedkernelfourierrep}
u_{i}^{\rm far} 
\approx \sum_{\bm \in [-n,\dots,n]^d} \frac{1}{\pi^2} 
\left(\frac{\sin(\tilde{C}|\bk_{\bm}|/2)}
{|\bk_{\bm}|}\right)^2
e^{-|\bk_{\bm}|^2 \sigma^2/4} e^{i\bk_{\bm} \cdot \x_i} \,\widehat{g}(\bk_{\bm})
\ee
decays superalgebraically.
Here, $\bk_{\bm} = \frac{K_{\rm max}}{n} \bm = \frac{K_{\rm max}}{n} (m_1,m_2,m_3)$.

It is convenient to write the far field contribution in the form
\be\label{farpart2}
u_{i}^{\rm far}=\sum_{\bm \in [-n,\dots,n]^d} 
w_{\bm} e^{i\bk_{\bm} \cdot \x_i}  \widehat{g}(\bk_{\bm}), \quad i=1,\ldots,N,
\ee
where
\be\label{farpart2form}
w_{\bm}  =  \frac{1}{\pi^2} \left(\frac{\sin(\tilde{C}|\bk_{\bm}|/2)}{|\bk_{\bm}|}\right)^2
e^{-|\bk_{\bm}|^2\sigma^2/4}.
\ee
We may rewrite \cref{farpart2} more explicitly in terms of matrix-vector products 
as follows (omitting the details of unrolling the multi-index $\bm$ 
to a single index $m$ that ranges from 1 to $N_F$):
\be
\label{farpart3}
u^{\rm far}= \cf_2 \, \cd \, \cf_1 \, \rho, 
\ee
where
\be
u^{\rm far}=\begin{bmatrix} u_{1}^{\rm far} \ldots u_{N}^{\rm far}\end{bmatrix}^T, \quad
\rho=\begin{bmatrix} \rho_1 \ldots \rho_N\end{bmatrix}^T,
\ee
$\cf_1$ is an $N_F\times N$ matrix with entries $\cf_{1}(m,j)=e^{-i\bk_{\bm}\cdot \y_j}$,
$\cd$ is an $N_F\times N_F$ diagonal matrix with entries
$\cd(m,m)=w_{\bm}$,
and $\cf_2$ is an $N\times N_F$ matrix with entries $\cf_{2}(i,m)=e^{i\bk_{\bm}\cdot \x_i}$.

The application of $\cf_1$ or $\cf_2$ to a vector requires
only $O(N_F\log N_F + N \log^3(1/\epsilon))$ work using the 
type-1 and type-2 nonuniform fast Fourier transform (NUFFT) 
\cite{finufft,finufftlib,nufft2,nufft3,nufft6,potts2004},
where $\epsilon$ is the desired precision. The application
of $\cd$ clearly requires $O(N_F)$ work.

Let us now assume that we have divided
the computational domain into a grid with $N_B= 8^L$ 
cells (boxes), each of side length $\cR_L=2^{-L}$. (In $d$ dimensions,
there are $2^{dL}$ boxes after $L$ levels of uniform refinement, with each
refinement by a factor of 2.)
Assuming that the particles are uniformly distributed, the 
number of particles in each box is approximately the average value $n_s=N/(8^L)$. 

We now choose $\sigma$ so that, when computing the local part,
\[
u_{i}^{\rm local} =
\sum_{j=1}^{N}R(|\x_i-\y_j|)\rho_j
\]
for a target $\x_i$ in a box $B$ with side length $\cR_L$, the interactions
are negligible beyond the $3^3$ nearest neighbors 
in the uniform grid, including itself.
That is to say, we would like to enforce that
$\erfc(r/\sigma) \le \epsilon$
for $r \geq \cR_L$. A sufficient condition is that
\be\label{femdeltavalue}
\sigma \approx \frac{\cR_L}{\sqrt{\log(1/\epsilon)}}.
\ee
From the discussion above, the number of Fourier modes needed for the 
far field computation is 
\be
N_F = O(K_{\rm max}^3) =  O \left( \left( \frac{\log(1/\epsilon)}{\cR_L} \right)^3 \right)
= O\left(\frac{N}{n_s} \log^3(1/\epsilon)\right).
\ee
With this choice of $\sigma$,
the cost of evaluating all local interactions is bounded
by $3^3 n_s N$ evaluations of the residual kernel $R(|\x_i-\y_j|)$.
In summary, the local interactions require of the order
$O(N n_s)$ work and the far field interactions require of the order 
$O( N\log \frac{N}{n_s})$ work. For $n_s = O(1)$, this
leads to an $O(N \log N)$ method.
The optimal choice of $n_s$, of course, is the value 
that balances the far field and local work,
which depends on the precision $\epsilon$, the precise value of $N_F$
and the cost of residual kernel evaluations.
Finally, we should note that
we have used a grid with $8^{L}$ boxes for convenience only; any uniform grid
on which the FFT is efficient can be used.

\subsection{An $O(N \log N)$ \acron method}\label{sec:afem}

We turn now to the development of an adaptive, hierarchical
extension of Ewald's method, which will also require $O(N \log N)$ work, but does
not rely on the FFT for its asymptotic complexity. 
Recall that the problem with highly nonuniform particle distributions
is that we cannot bound the number of particles that lie
in any single box $B$ in a uniform grid with $O(N)$ boxes. As a result, 
we cannot control the cost of the local interactions. 
Creating a uniform grid with smaller boxes 
would require that $N_F \gg N$, increasing the cost of the far-field 
interactions. Thus, an adaptive data structure is essential, and we
shall rely here on a level-restricted oct-tree in three dimensions
(and on a level-restricted quad-tree in two dimensions) 
\cite{biros2008sisc}. The tree construction
begins with the unit box in dimension $d$ 
and sorts all sources and targets into its $2^d$ children, obtained by bisecting
the box in each coordinate direction. The refinement process is continued
recursively until there are fewer than $n_s$ sources and targets in any 
leaf node, with the usual convention of referring to the box that is subdivided
as the {\em parent} and the $2^d$ subdivisions as its children. 
The free parameter $n_s$ is typically chosen in a precision-dependent manner,
discussed in more detail below.
The requirement of level-restriction is that no two boxes that
share a boundary point in the data structure can be more than one refinement 
level apart, and there are standard algorithms to enforce such a condition
\cite{biros2008sisc}.
In a level-restricted tree, for any box $B$ at level $l$, 
we will refer to the boxes at the same refinement level which share a
boundary point as {\em colleagues}, including
$B$ itself. 
Boxes which share a boundary point with box $B$ at level $l$ but are themselves
leaf nodes at level $l-1$ will be referred to as {\em coarse neighbors}
(see \cref{boxdefs}).
The set of coarse neighbors will be denoted by ${\cal N}(B)$.
We will refer to the unit box itself as level $0$ and to the 
maximum refinement level as $L_{\rm max}$. The linear dimension of a box at level
$l$ will be denoted by $\cR_l = 2^{-l}$.

\begin{figure}[!ht]
\centering
   \includegraphics[width=2.5in]{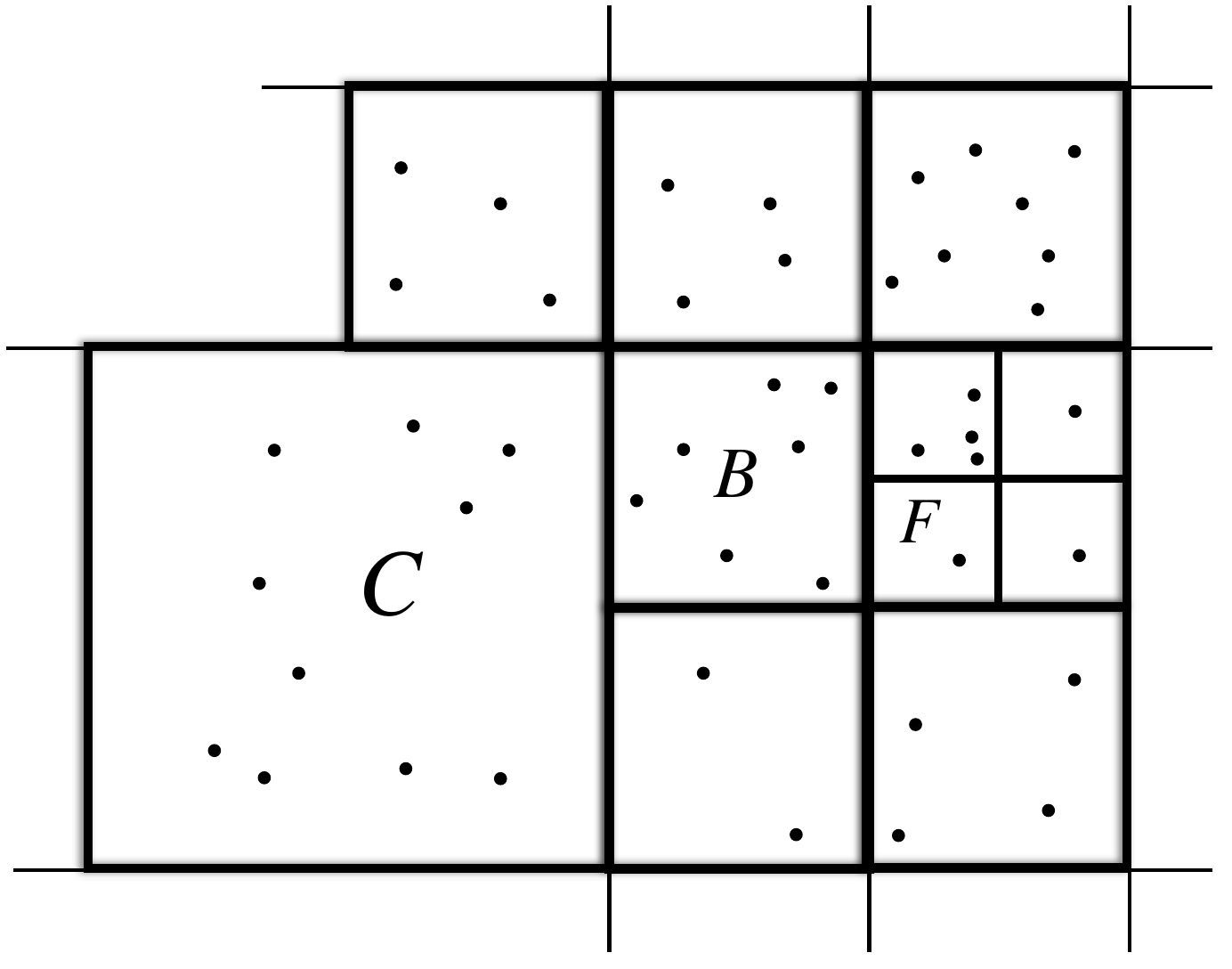}
   \caption{\sf 
A portion of a level-restricted tree is depicted (in two dimensions).
For a leaf node $B$, the boxes at the same level are called {\em colleagues},
while $F$ is a fine neighbor and $C$ is a coarse neighbor.
}
\label{boxdefs}
\end{figure}

Let us now consider the following 
``telescoping" decomposition of the Laplace kernel {for each level}
$L\in \{ 0,\dots,L_{\rm max} \}$:
\be\label{mlks}
\frac{1}{r} = W_0(r)+\sum_{l=0}^{L-1}D_{l}(r)+R_L(r), \quad 
L=0,\ldots,L_{\rm max},
\ee
where 
the windowed kernel $W_0$, the difference kernels $D_l$, and the 
residual kernels $R_L$
are defined by the formulas
\be\label{kerneldef}
\ba
W_0(r)&\approx M_0(r) = \frac{\erf(r/\sigma_0)}{r}, \\
D_l(r)&=\frac{\erf(r/\sigma_{l+1})-\erf(r/\sigma_{l})}{r},\\
R_L(r)&=\frac{\erfc(r/\sigma_L)}{r},
\ea
\ee
for $\sigma_0 > \sigma_1 > \ldots > \sigma_{L_{\rm max}}$.
It is easy to see that \cref{mlks} is indeed an equality at every level
$L = 0,\ldots,L_{\rm max}$. We
define the mollified far-field kernel at level $L\geq 1$ by
\be\label{windowedkernelL}
M_L(r) = \frac{\erf(r/\sigma_L)}{r} =
M_0(r)+\sum_{l=0}^{L-1}D_{l}(|\x|).
\ee
Far field interactions beyond 
the colleagues at level $L$ 
are accounted for in terms of the kernel $M_L(|\x|)$
to the desired precision, and the remaining
corrections involve the residual kernel $R_L(|\x|)$ only within colleagues
and coarse neighbors. (At level $L=1$, of course, the colleagues
cover the entire computational
domain.)

The departure from the classical Ewald approach
\eqref{ewaldsplitting},
is the telescoping decomposition of the free-space Green's function 
and the introduction of the more and more localized
difference kernels $D_l$ as we proceed from level to level in a tree hierarchy 
(hence the nomenclature ``kernel-splitting").
Unlike previous multilevel summation methods 
\cite{brandt1990jcp,brandt1998,skeel2002,hardy2009pc,multilevel_summation_2015,
multilevel_summation_bspline,tensor_multilevel_ewald},
we systematically rely on Fourier analysis to compute convolutions at each scale. 
As for the values  of $\sigma_l$ ($l=0,\ldots,L_{\rm max}$), they are chosen so that
at every scale, the residual kernel $R_l$ is supported within nearest neighbors at the
same level. That is,
we require that $\erfc(r/\sigma_l) \le \epsilon$
for $r\ge \cR_l=2^{-l}$. This is accomplished by letting
\be\label{deltavalue}
\sigma_0 \approx \frac{\cR_0}{\sqrt{\log(1/\epsilon)}},\quad
\sigma_l = \sigma_{l-1}/2. 
\ee
The full $O(N \log N)$ scheme is illustrated in \cref{fig:dmk_adap}.
Before presenting the algorithm in detail, several observations are in order.

\begin{figure}[!ht]
\centering
   \includegraphics[width=5in]{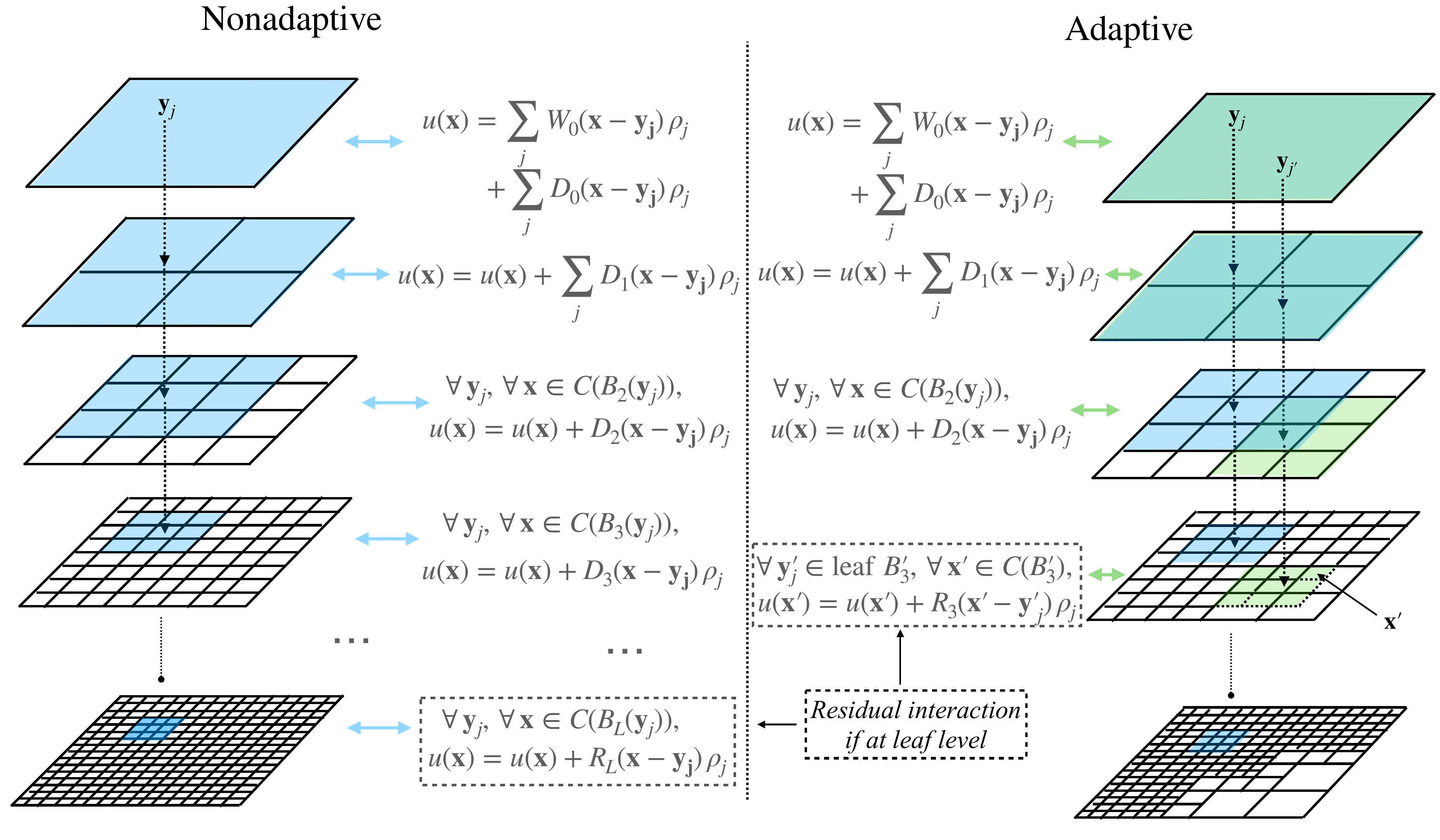}
   \caption{\sf 
The introduction of the telescoping series
\eqref{mlks} allows for a multi-level version of Ewald summation,
illustrated in two dimensions.
Convolution at level 0 with $W_0$ is 
is global, but requires only a short Fourier transform, whose
length is independent of the number of particles.
At successively finer levels, convolution with the difference kernels $D_l$
is carried out for colleagues (nearest neighbors) only. 
In the nonadaptive case (left), each source $\y_j$ lies in some leaf box $B$ 
at the finest level, and direct interactions are computed with the residual 
kernel $R_L$ for targets within $B$'s colleagues. 
In the adaptive case (right), some sources such as ${\bf y}'$ 
may lie in a leaf box at some coarser
level $l$ (here, $l=3$). That leaf box is denoted by $B_3'$ in the figure.
Direct interactions are computed using the
residual kernel $R_{3}$ for the targets within $B_3'$ and its colleagues.
Aside from some care in bookkeeping, implementation of the adaptive
scheme is straightforward.
}
\label{fig:dmk_adap}
\end{figure}

\begin{figure}[!ht]
\centering
\includegraphics[height=50mm]{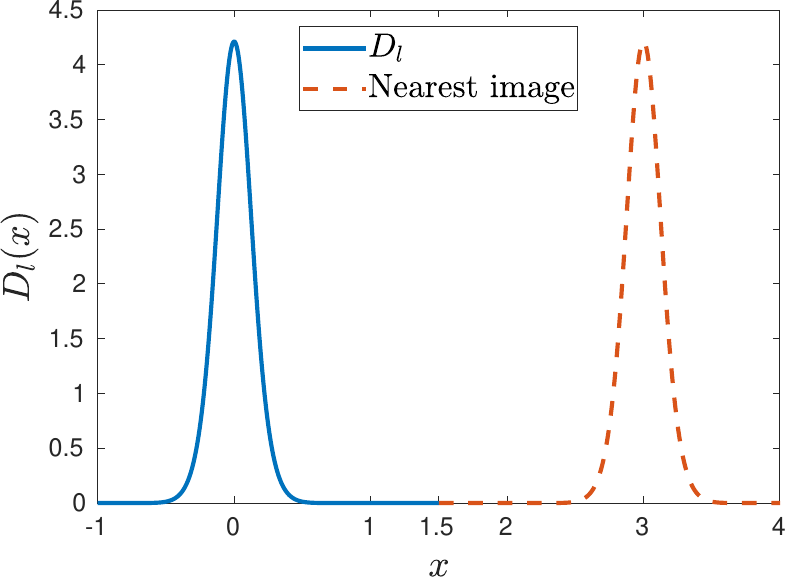}
\caption{\sf The difference kernel $D_l$ is compactly
  supported on a ball of radius $\cR_l$ ($=1$ in the figure) to the desired precision, 
  while its Fourier spectral approximation
  needs to be valid on a ball of radius $2\cR_l$. Since the actual support of $D_l$ is smaller
than the full interval, its periodic
  images only need to be $\frac{3}{2} \, \cR_l$ away in order to 
  avoid the aliasing error which controls the accuracy of its Fourier spectral approximation.
  This accounts for the factor of $\frac{3}{2}$ in \cref{fourierspacing}.
}
\label{diffkernelaliaserror}
\end{figure}

\begin{enumerate}[label=(\alph*).]
\item The windowed kernel $W_0(\x)$ has the same behavior 
as in the classical Ewald approach, with a smooth and 
rapidly decaying Fourier transform.
Here, however, $\sigma_0$ is so large that the number of quadrature nodes 
$N_F$ in the Fourier transform
is independent of $N$ (the number of particles) and is simply 
a function of the precision with
\[ N_F \approx (4 \log(1/\epsilon))^d  \]
from \eqref{Knodesdef}.
\item For a particle in a box  $B$ at level $l$,
the difference kernel $D_l(\x) = D_l(|\x|)$ is negligible
(smaller than the requested precision $\epsilon$) beyond $B$'s colleagues. 
It is a smooth function of $\x\in\mathbb{R}^3$
and its Fourier transform is given by the formula
\be\label{differencekernelft}
\hat{D}_l(\bk) =\left\{\begin{aligned}& 4\pi
\frac{e^{-|\bk|^2 \sigma_{l+1}^2/4}-e^{-|\bk|^2 \sigma_{l}^2/4}}{|\bk|^2},& \bk\ne (0,0,0)\\
&  \pi(\sigma_l^2-\sigma_{l+1}^2),
 & \bk=(0,0,0)\end{aligned}\right.
\ee
As above, the combination of smoothness and rapid decay implies that 
the trapezoidal-rule approximation to the Fourier integral 
converges at an exponentially rate.
\item
When proceeding from level $l$ to level $l+1$, $\sigma$ decreases by a
factor of two, according to \eqref{deltavalue}, so that
the range of integration in the corresponding Fourier integral
doubles, with
\be\label{fouriercutoff}
K_l \approx \frac{4}{\cR_l}\log\left(\frac{1}{\epsilon}\right)
\ee
to maintain the same level of accuracy.
However,
the distance between relevant sources and targets
shrinks by a factor of two at the same time, so that the number of oscillations,
{\em and the number of quadrature nodes in the range $[-K_l,K_l]^d$},
remains constant. 
($K_l$ can actually be set slightly smaller than 
the value in \eqref{fouriercutoff} because of the extra factor $1/|\bk|^2$ in
\eqref{differencekernelft}.)
\item In the multilevel \acron algorithm, we use localized
Fourier convolution to compute the influence of the difference kernels
due to sources that lie within every box $B$ at level $l$ with targets that lie within $B$
or its colleagues. As a result, 
our trapezoidal quadrature must be accurate in a ball of radius
$2\cR_l = \cR_{l-1}$.
From the Poisson summation formula
\eqref{poissonsummation},
it can be shown \cite{trefethentrap} that
\be\label{fourierspacing}
\frac{2\pi}{h_l} \ge \frac{3}{2} \cR_l \Rightarrow h_l\le \frac{4\pi}{3 \cR_l}
\ee
is sufficient to achieve a precision $\epsilon$ for $|\x|\le 2\cR_l$,
so that we have
\be\label{differencekernelft2}
D_{l}(\x) \approx \frac{1}{(2\pi)^3} 
\sum_{\bk \in [-n_f,\dots,n_f]^d}
e^{ih_l\bk\cdot \x} \hat{D}_l(h_l\bk),
\ee
where 
\be
n_f = \frac{K_l}{h_l} = \frac{3}{\pi}\log\left(\frac{1}{\epsilon}\right).
\ee
(Similar, but less sharp, estimates can be derived from the Euler-Maclaurin formula.)
Using the same notation as for the classical Ewald approach, we will write
\be\label{differencekernelft2a}
D_{l}(\x)  \approx
\sum_{\bm \in [-n_f,\dots,n_f]^d} w_{l,\bm} e^{i\bk_{\bm} \cdot \x},
\ee
where $\bk_{\bm} = h_l (m_1,m_2,m_3)$ and
\be
\label{wlmdef}
w_{l,\bm} =\frac{1}{(2\pi)^3} \hat{D}_l(\bk_{\bm}).
\ee 
The total number of Fourier modes needed for the difference kernels is
\be\label{diffkernelftterms}
N_F=(2n_f+1)^3\approx \left(\frac{6}{\pi}\log\left(\frac{1}{\epsilon}\right)\right)^3,
\ee
independent of the level.
\item The residual kernel at level $L$, $R_L(|\x|)$, 
is compactly supported to precision $\epsilon$ 
within a ball of radius $\cR_L$, the boxsize at level $L$.
\end{enumerate}

\begin{figure}[!ht]
\centering
\includegraphics[height=36mm]{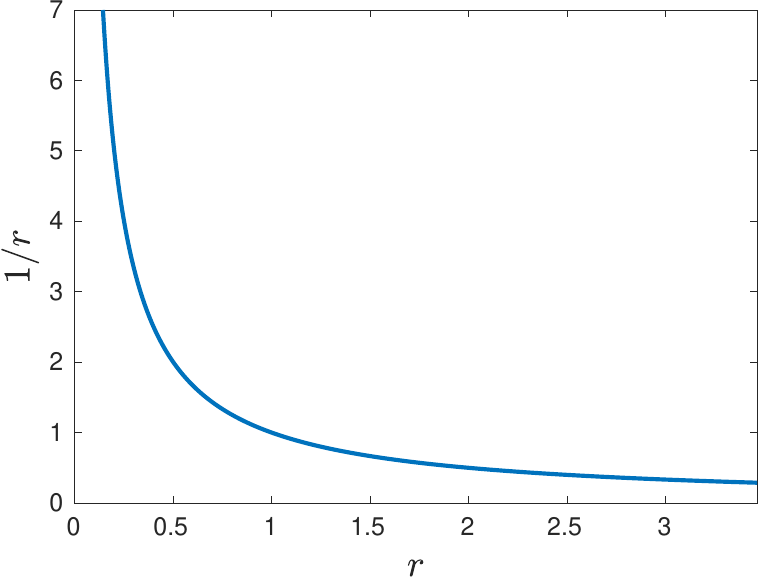}\hspace{4mm}
\includegraphics[height=36mm]{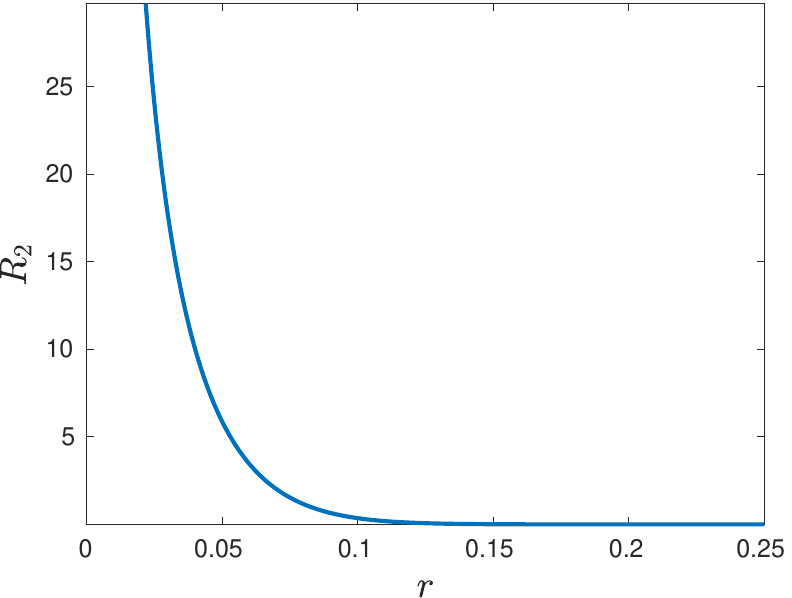}

\vspace{4mm}

\includegraphics[height=36mm]{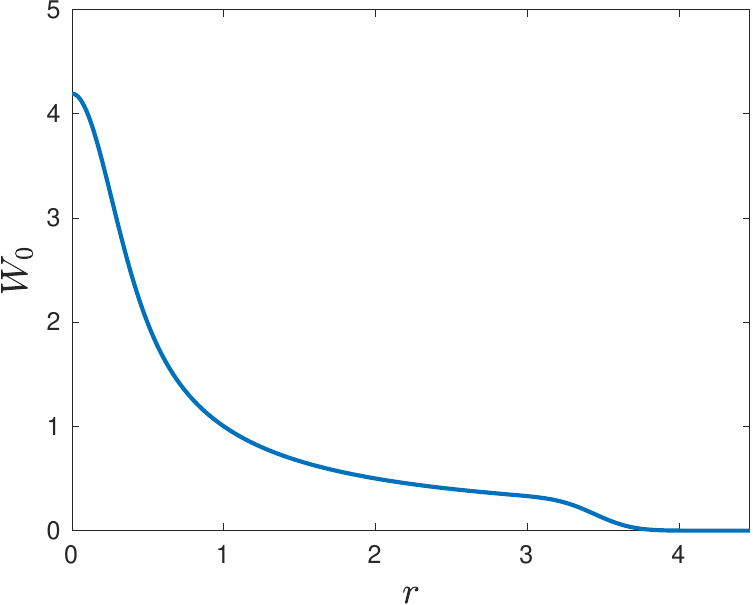}\hspace{4mm}
\includegraphics[height=36mm]{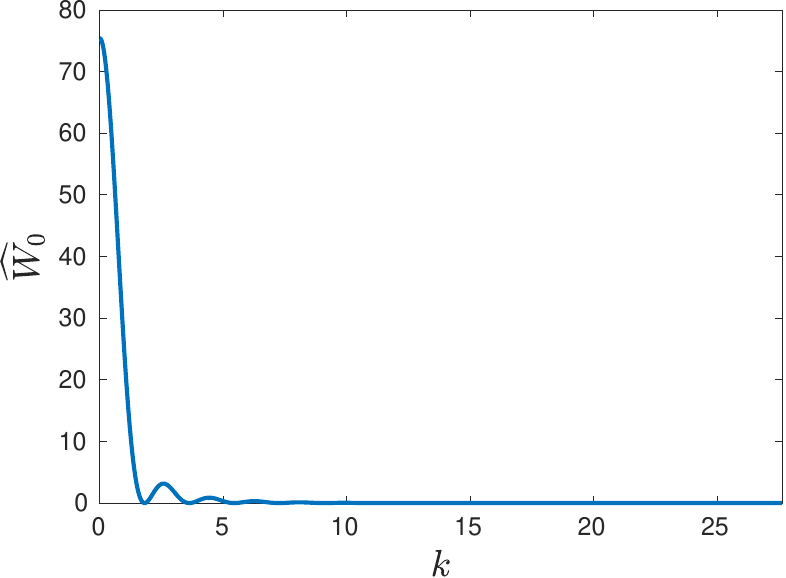}

\vspace{4mm}

\includegraphics[height=36mm]{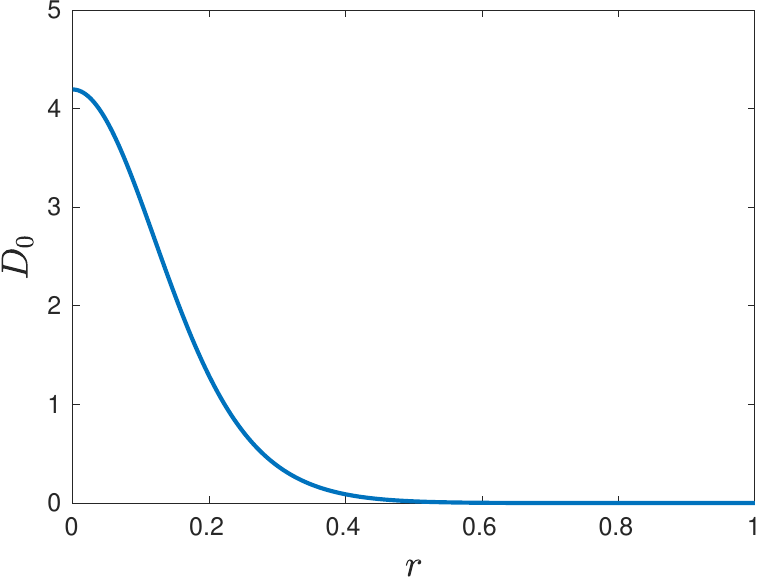}\hspace{4mm}
\includegraphics[height=36mm]{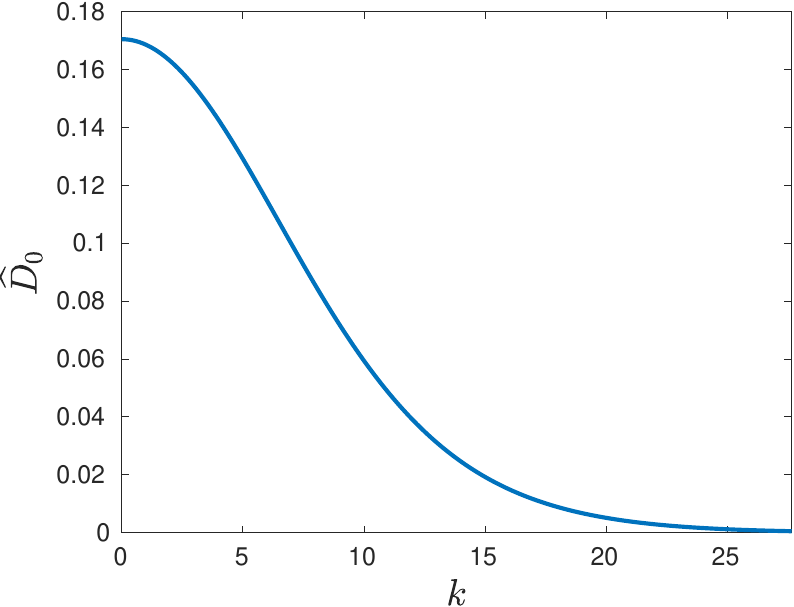}

\vspace{4mm}

\includegraphics[height=36mm]{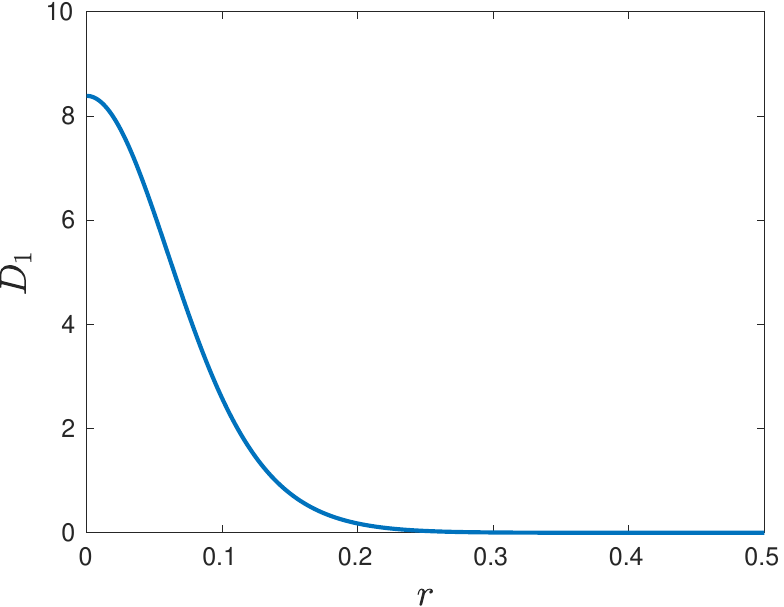}\hspace{4mm}
\includegraphics[height=36mm]{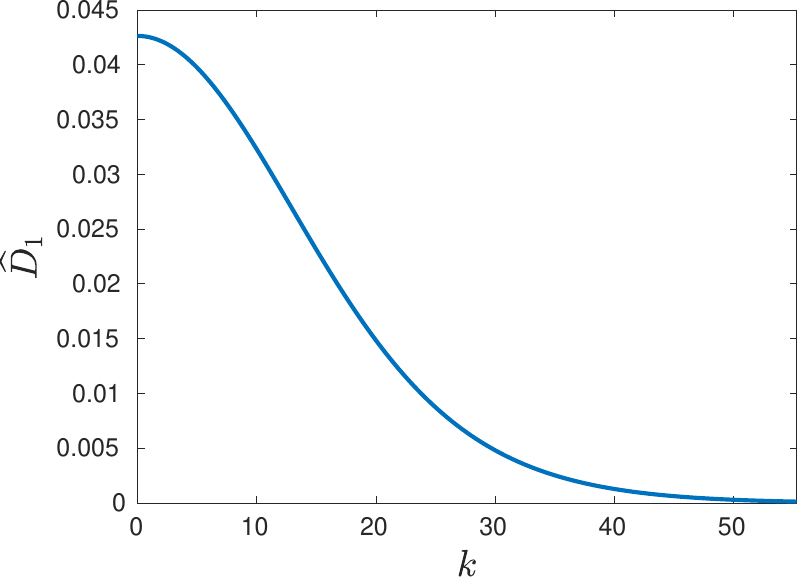}
\caption{\sf The dual-space splitting of the $1/r$ kernel 
  using Gaussians on a leaf box at level $2$
  with six digits of accuracy:
  $1/r=W_0(r)+D_0(r)+D_1(r)+R_2(r)$ for $r\le \sqrt{3}$.
  Top row: left - the original kernel $1/r$; right - the residual kernel $R_2(r)$
  that is numerically supported on $[0,1/4]$. Second row: left -
  the windowed kernel $W_0(r)$;
  right - its Fourier transform. Third row: left - the difference kernel $D_0(r)$ that
  is numerically supported on $[0, 1]$; right - its
  Fourier transform. Fourth row: left - the difference kernel $D_1(r)$ that is
  numerically 
  supported on $[0, 1/2]$; right - its Fourier
  transform. Note that the difference kernels $D_1$ is exactly a rescaled version
  of $D_1$, with one half the support and twice the frequency content. }
\label{l3dkernelsplit}
\end{figure}

Suppose now that the target point $\x_i$ (not coincident with any source)
lies in some leaf box $B_{L}$ at level $L$,
with $B_L \subset B_{L-1} ... \subset B_0$ denoting the hierarchy of parents at
coarser levels, where $B_0$ is the unit box at level 0.
Using \cref{mlks} and \cref{laplacesum}, we have
\be\label{laplacesum3}
\ba
u(\x_i)&= \qquad
\underbrace{u^{\rm far}(\x_i)}\qquad\quad +\qquad
\underbrace{\sum_{l=1}^{L-1}u_{l}^{\rm diff}(\x_i)}\qquad +\qquad 
\underbrace{u_{L}^{\rm local}(\x_i)}\\
&=
\sum_{j=1}^{N}W_0(|\x_i-\y_j|)\rho_j + \sum_{l=0}^{L-1}\sum_{j=1}^{N}D_l(|\x_i-\y_j|)\rho_j
+\sum_{j=1}^{N}R_L(|\x_i-\y_j|)\rho_j.
\ea
\ee
Before describing a fast algorithm based on \cref{laplacesum3}, we introduce
some notation.

\begin{definition}
Let $\bc(B)$ denote the center of box $B$ at level $l$. The outgoing expansion
$\Phi_l(B)$ for box $B$ has $N_F$ elements, indexed by 
\be\label{outgoingexpansion}
[\Phi_{l}(B)]_{\bm}=\sum_{\y_j\in B} e^{-ih_l\bk_{\bm}\cdot (\y_j-\bc(B))} \, \rho_j, 
\quad {\bm} \in [-n_f,\dots,n_f]^3.
\ee
\end{definition}

\begin{lemma} 
Suppose that $B$ is a box containing a target $\x_i$ at level $l$.
The {\em incoming expansion} $\Psi_{l}(B)$ has $N_F$ elements, indexed by 
\be\label{diagonaltranslation}
[\Psi_{l}(B)]_{\bm}=\sum_{S\in C(B)}w_{l,{\bm}}e^{ih_l\bk_{\bm}\cdot (\bc(B)-\bc(S))}
\Phi_{l}(S)[\bm],
\quad \bm \in [-n_f,\dots,n_f]^3,
\ee
where $w_{l,\bm}$ is given by \eqref{wlmdef}.
Then
\be\label{incomingexpansion}
u_{l}^{\rm diff}(\x_i)=
\sum_{\bm \in [-n_f,\dots,n_f]^3} 
e^{ih_l\bk_{\bm} \cdot (\x_i-\bc(B))}[\Psi_{l}(B)]_{\bm}. 
\ee
\end{lemma}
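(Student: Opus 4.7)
The plan is to start from the definition
\[
u_l^{\rm diff}(\x_i) = \sum_{j=1}^{N} D_l(|\x_i - \y_j|)\,\rho_j,
\]
truncate the sum over sources using the localization of $D_l$, and then substitute the discrete Fourier approximation \eqref{differencekernelft2a} and rearrange.

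First I would observe that, by property (b) preceding the lemma, $D_l(|\x_i-\y_j|)$ is smaller than $\epsilon$ in magnitude whenever $\y_j$ lies outside the union of $B$ and its colleagues. Hence, up to an error controlled by $\epsilon$, only sources $\y_j$ that belong to some colleague $S \in C(B)$ contribute, giving
\[
u_l^{\rm diff}(\x_i) \;\approx\; \sum_{S\in C(B)} \sum_{\y_j\in S} D_l(|\x_i - \y_j|)\,\rho_j.
\]

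Next I would insert the spectrally accurate approximation \eqref{differencekernelft2a}, which holds on a ball of radius $2\cR_l$ and therefore covers the vector $\x_i - \y_j$ for every pair $(\x_i,\y_j)$ with $\y_j$ in a colleague of $B$. Writing the plane wave through the centers $\bc(B)$ and $\bc(S)$,
\[
e^{i h_l \bk_{\bm}\cdot(\x_i-\y_j)} \;=\; e^{i h_l \bk_{\bm}\cdot(\x_i-\bc(B))}\; e^{i h_l \bk_{\bm}\cdot(\bc(B)-\bc(S))}\; e^{-i h_l \bk_{\bm}\cdot(\y_j-\bc(S))},
\]
so the triple sum factors. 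The innermost factor, summed against $\rho_j$ over $\y_j \in S$, is exactly $[\Phi_l(S)]_{\bm}$ from \eqref{outgoingexpansion}. The middle factor multiplied by the weight $w_{l,\bm}$ and summed over $S\in C(B)$ produces $[\Psi_l(B)]_{\bm}$ as defined in \eqref{diagonaltranslation}. The remaining outer factor $e^{i h_l \bk_{\bm}\cdot(\x_i-\bc(B))}$ depends only on the target, which gives \eqref{incomingexpansion}.

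The only real subtlety, and the main thing to be careful about, is justifying the truncation and the use of \eqref{differencekernelft2a} simultaneously: one needs that for every $S\in C(B)$ and every $\y_j\in S$, the displacement $\x_i-\y_j$ lies inside the ball of radius $2\cR_l$ on which the trapezoidal-rule Fourier sum is accurate. This is exactly the design condition behind \eqref{fourierspacing} (see \cref{diffkernelaliaserror}); combined with the $\epsilon$-localization of $D_l$ that controls the discarded sources, the total error is $O(\epsilon)\sum_j |\rho_j|$, which is what is meant by equality in \eqref{incomingexpansion} at the target precision. Everything else is straightforward index manipulation.
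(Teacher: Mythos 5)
Your proof is correct and is an unpacked version of the paper's terse argument, which simply says the result ``follows immediately from \eqref{differencekernelft2a} and the fact that translation of an expansion in exponentials is in diagonal form.'' The truncation to colleagues via the $\epsilon$-localization of $D_l$, the factoring of the plane wave through $\bc(B)$ and $\bc(S)$, and the verification that \eqref{differencekernelft2a} is valid on the $2\cR_l$ ball are exactly the right details to supply.
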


This result follows immediately from 
\eqref{differencekernelft2a} and the fact that translation of an expansion in
exponentials is in diagonal form. 

\begin{definition}
The colleagues of a box $B$ at level $l$ will be denote by 
$C(B)$ and the number of boxes in $C(B)$ will be denoted by
$N_C(B)$. ($N_C(B)$ is clearly bounded by $3^d$.) 
\end{definition}

\vspace{.2in}
\hrule
\vspace{.2in}

\begin{center}
{\bf Informal description of the $O(N \log N)$ algorithm}
\end{center}

\vspace{.1in}

\begin{enumerate}[label=(\roman*).]
\item
Sort sources and targets on an adaptive level-restricted tree.
Let $L_{\rm max}$ denoted the finest level.

\item
Initialize the field $u(\x_i)$ by computing
the first term $u^{\rm far}$ 
in \cref{laplacesum3} directly, as in 
\cref{farpart3}, except that $N_F$ is given by 
\cref{diffkernelftterms}, and independent of $N$.
[{\em This requires $O(N N_F)$ work.
Using the type-1 and type-2 NUFFTs, this can be reduced to
$O(N_F\log N_F + N \log^d(1/\epsilon))$ work, as in the discussion of the classical
Ewald method above.}]

\item  For every level $l=1,\dots,L_{\rm max}$,
compute the outgoing expansions $\Phi_{l}(B)$ for
  each {\em non-leaf} box $B$ containing sources at that level. [{\em Using direct methods,
this again requires $O(NN_F)$ work. With the type-1 NUFFT, the cost can be reduced to 
$O(N + N_F \log N_F)$ work with a precision-dependent constant.}]
 
\item  For every level $l=1,\dots,L_{\rm max}$,
for each {\em non-leaf} box $B$ at that level, convert the 
outgoing expansion to an incoming expansion in each of its colleagues $C$ that contains
targets and add to $\Psi_{l}(B)$. 
[{\em This requires at most $O(3^d N_F)$ work per box.}]

\item  For every level $l=1,\dots,L_{\rm max}$,
evaluate the sum \eqref{incomingexpansion}
for each target $\x_i$ if it lies in a box with an incoming expansion and add to 
field $u(\x_i)$.
[{\em Using direct methods, this again requires $O(NN_F)$ work. 
With the type-2 NUFFT, the cost can be reduced to 
$O(N + N_F \log N_F)$ work with a precision-dependent constant.}]

\item  For every level $l=1,\dots,L_{\rm max}$,
for every source $\y_j$, which is contained in a leaf box $B$ at level $l$, 
for each target $\x_i$ in a colleague $C(B)$ or coarse neighbor ${\cal N}(B)$, 
increment the potential with the local contribution:
\be
\ba
u(\x_i) = u(\x_i) +
R_l(|\x_i-\y_j|)\rho_j  \, .
\ea
\ee
\end{enumerate}

\vspace{.2in}
\hrule
\vspace{.2in}

In the adaptive setting, counting direct interactions requires
some care, but it can be shown that the total number of direct interactions
is of the order $O(N)$ 
(see \cite{carrier1988sisc,nabors1994sisc} for more details).
In fact, uniform distributions are typically 
the setting where the direct cost is the greatest.
It is worth noting that
additional savings can be had by observing that 
the residual kernel $R_l$ is negligible beyond a ball of radius $\cR_l$. 
Thus, in the simplest case where there are approximately $n_s$ 
targets in each of $3^d$ colleagues, the relevant targets for
each source $\y_j$ are only those contained in the sphere of radius $\cR_l$
centered at $\y_j$.
Thus, the average number of targets
with a nonzero contribution from a given source is approximately
$\frac{4\pi}{3} n_s$ rather than $27n_s$.
Finally, we recall that if a target is coincident with one of the sources,
it should be omitted in the direct calculation and the 
self-interaction correction
$u_{\rm self}(\x_i)$ should be added to the potential $u(\x_i)$.

Without carrying out a detailed operation count, assuming $L_{\rm max} = O(\log N)$,
this adaptive. multilevel extension of Ewald summation requires 
$O(N)$ work at each of $O(\log N)$ levels. 

\begin{remark} \label{kersplit}
Kernel-splitting by itself is not new. It is the basis for the multilevel
summation methods in  
\cite{brandt1990jcp,brandt1998,hardy2009pc,multilevel_summation_2015,
multilevel_summation_bspline,skeel2002}.
However, those methods rely
on the evaluation of the smooth, difference kernel interactions 
through low-rank approximations  in physical space.
(Of note is the recent paper 
\cite{tensor_multilevel_ewald} which uses a sum-of-Gaussian approximation of the 
difference kernel and direct convolution in physical space using
separation of variables for each individual Gaussian.) 
\acron is a {\em dual-space} method, like Ewald summation, making use of the 
Fourier transform to diagonalize these
interactions with a significant cost savings.
Diagonal translation plays a role
in some existing kernel-independent FMMs, such as \cite{fmm7}
and \cite{zhang2011jcp}, where Fourier-based convolution is used to account for
well-separated interactions.
\end{remark}

\subsection{The linear scaling full \acron algorithm}
\label{sec:afemmodifications}

In the scheme just outlined, we were obliged to carry out $O(N)$ operations at each of 
$\log N$ levels. In order to obtain an $O(N)$ scheme, we will need to develop a two-pass
version of the method (as in multilevel summation or the FMM). 
Instead of step (ii), we 
will begin at the level of leaf nodes and construct a set of {\em proxy} charges 
at all 
coarser levels that give rise to the same field as the original source distribution.
This will constitute an ``upward pass" (see \cref{uppassfig}). 
In the subsequent downward pass, rather than steps (iii) and (iv) above, where the difference
kernel contributions are evaluated at every level, we convert the 
incoming field expressed in terms of complex exponentials into 
an expansion in Chebyshev polynomials.
We then interpolate the Chebyshev expansion obtained in each parent box to its $2^d$
children and continue that process recursively until a leaf node is reached. Only then
is the Chebyshev expansion evaluated at the target locations (as in step (v) above).
In this manner, we access each source and target point only $O(1)$ times, 
independent of the depth of the tree.

\begin{figure}[!ht]
\centering
   \includegraphics[width=3.5in]{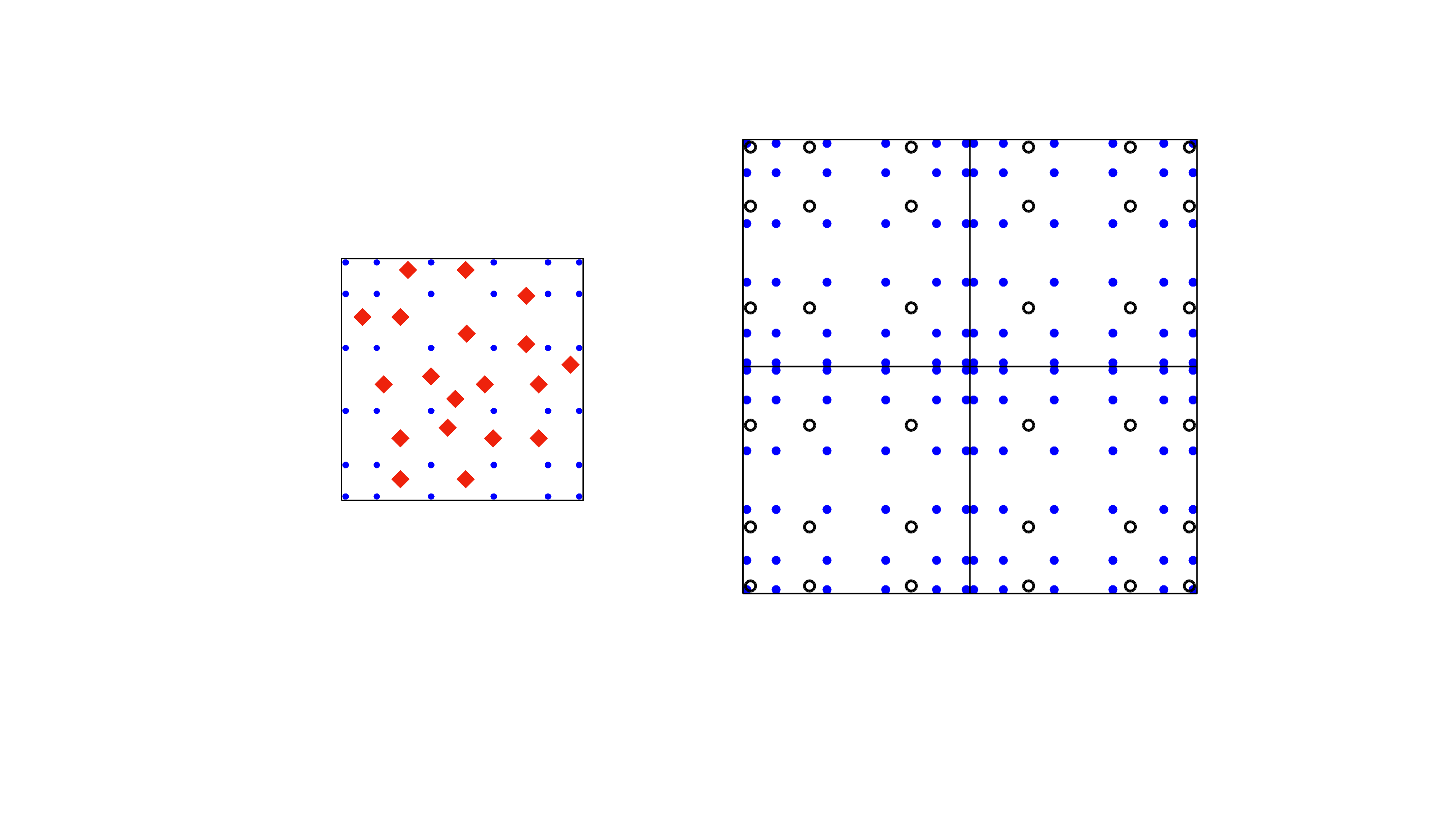}
   \caption{\sf 
In the $O(N)$ scheme, we construct a hierarchy of ``proxy"
charges to account for interactions at coarser and coarser scales.
(Left) At the level of leaf nodes, we first build a tensor product Chebyshev grid 
of proxy charges (solid blue circles) with fields equivalent to those induced by the 
original sources themselves (red diamonds), illustrated in two dimensions.
The cost of obtaining proxy charges from arbitrary charge locations 
requires $O(N p^d)$ work in $d$ dimensions, where $N$ is the number of sources.
(Right) In the upward pass, we translate the tensor product of proxy charges on each
child (solid blue circles) to a single tensor product of proxy charges at the level of 
the parent box (open black circles).
The parent proxy grid is responsible for approximating smoother interactions at its
own level. 
}
\label{uppassfig}
\end{figure}

Since the convolution kernels are smooth, we may use the anterpolation  approach 
described in \cref{sec:anterp},
illustrated in \cref{uppassfig}.
That is, in every leaf node, we replace the charges $\{ \rho_j\}$ with a 
tensor product grid of equivalent sources (``proxy charges") which give rise to the same smooth
potential over the range of interest.
At each coarser level, the $2^d$ tensor product grids of proxy charges 
are merged into a single tensor product grid, according to \cref{ctoplemma}.
This eliminates the $O(N \log N)$ cost incurred in the simpler scheme above, 
where some expansion is created from each source at each level.

\begin{definition}
The operator mapping the child proxy grid to the parent proxy grid will be 
denoted by $T_{\rm c2p}$. 
$T_{\rm c2p}$ is simply the (multi-index) anterpolation matrix $U^T$ where
$U$ is defined in \eqref{Udefnd}.
\end{definition}

Given the proxy charges on a tensor product grid, it is straightforward to
compute outgoing expansion for a box $B$ using a modified version of
\eqref{outgoingexpansion}:

\be\label{outgoingexpansion2}
[\Phi_{l}(B)]_{\bm}=\sum_{\bj \in [1,\dots,p]^d}
 e^{-ih_l\bk_{\bm}\cdot (\br_{\bj} - c(B))} \, \tilde \rho_{\bj}, 
\quad {\bm} \in [-n_f,\dots,n_f]^3,
\ee
where $\{ \br_{\bj} \}$ are the scaled tensor product Chebyshev nodes centered on $B$.
Using separation of variables, the cost is of the order
$O(p n_f^d)$.

We will also need to eliminate the $O(N \log N)$ cost incurred by evaluating the 
difference potential $u_{l,\rm diff}$ at each target at each level.
For this, once the incoming expansion has been obtained for a box $B$,
we compute the corresponding Chebyshev approximation $\Lambda_l(B)$:
\be
u_{l}^{\rm diff}(\x)=\sum_{\bm \in [1,\dots,p]^d}
\lambda_{l,\bm}T_{\bm-1}(\x)
\ee
with expansion coefficients 
\be\label{localexpansion}
\lambda_{l,\bm}=\sum_{\bn \in [1,\dots,p]^d} 
U(\bm,\bn) u_l(\tilde{\br}_{\bn}) \, .
\ee
Here, $\tilde{\br}_{\bn}$ are the shifted and scaled Chebyshev grid on the target box.
The values $u_l(\tilde{\br}_{\bn})$ are themselves obtained using separation of variables by 
evaluating
\be\label{incomingexpansion2}
u_{l}^{\rm diff}(\br_{\bn})=
\sum_{\bm \in [-n_f,\dots,n_f]^3} 
e^{ih_l\bk_{\bm} \cdot (\br_{\bn}-\bc(B))}[\Psi_{l}(B)]_{\bm}. 
\ee
The cost involved is of the order
$O( p n_f^d + p^2 n_f^{d-1} + .. p^d n_f)$. Since $n_f$ is generally larger than
$p$, we will refer to this cost as of the order $O(p n_f^d)$.

For non-leaf boxes, as in the FMM,
we do not evaluate the local expansions $\Lambda_l(B)$ directly.
Instead, we translate the local expansions from $B$ to its children.
The translation operator converting the local expansions from the
parent box to the child box is given by the formula
\be\label{p2ctranslation}
T_{\rm p2c}= V^{-1} E 
\ee
from \cref{ptoclemma} and \eqref{ptoc}.
It is easy to see that both $T_{\rm c2p}$ and $T_{\rm p2c}$
are independent of the level in the tree hierarchy and that there are 
$2^d$ such operators in $\mathbb{R}^d$.

For a target $\x$ in a leaf box $B$ at level $L$, we define the
{\em far-field potential} as the sum of the windowed kernel interactions and 
the difference kernel interactions up to that level: 
\be\label{farpotential}
u_{L}^{\rm far}(\x)=u^{\rm smooth}(\x)+\sum_{l=0}^{L-1}u_{l}^{\rm diff}(\x)
\ee
The far-field potential will be evaluated {\em once} for each
target via standard (Chebyshev) interpolation. 
The near-field correction is
\be\label{nearpotential}
\ba
u_{L}^{\rm local}(\x)&=
 \sum_{\y_j \in C(B)} R_L(|\x-\y_j|)\rho_j +
 \sum_{\y_j \in {\cal N}(B)} R_{L-1}(|\x-\y_j|)\rho_j\\
 &+
 \sum_{\y_j \in {\cal F}(B)} R_{L+1}(|\x-\y_j|)\rho_j,
\ea
\ee
where ${\cal N}(B)$ and ${\cal F}(B)$ are the coarse and fine neighbors of $B$,
respectively.

\begin{remark}
Note that the residual kernel index (in $R_L$, $R_{L-1}$ or $R_{L+1}$ above) is
tied to the level of refinement of the {\em source} box since the far field
contribution for that source may have terminated at a coarser level.
The union of $C(B)$, ${\cal N}(B)$, and ${\cal F}(B)$ is exactly the so-called
list 1 in the adaptive FMM (see, for example, \cite{cheng1999jcp}).
\end{remark}

\subsection{Reducing the cost of direct interactions} \label{localaccel}

In the \acron framework (as with FMMs and other linear scaling multilevel schemes),
the optimal tree depth is achieved when the cost of direct interactions, here with the 
residual kernel, balances the cost of far field interactions. Thus, accelerating
direct interactions has a significant effect on the overall performance.
As noted in the paragraph before \cref{kersplit}, we can reduce the number of 
targets that need to be considered by exploiting the fact that
the residual kernel $R_L$ is negligible beyond a ball of radius $\cR_L$ about any 
source at level $L$. As illustrated in the right-hand diagram
of \cref{boxdefs}, we can easily reduce the search for targets by refining 
every box {\em once}. 
Consider a source $\y_j$ that lies in the box $B$ in the 
left-hand diagram of \cref{boxdefs}. To see which targets are relevant, pairwise
distances would have to be computed throughout $B$'s colleagues and coarse neighbors.
With an additional level of refinement, however, suppose $\y_j$ is determined
to lie in the indicated child box $B_a$. We can then restrict the distance calculation 
to the colleagues and coarse neighbors of $B_a$ and only the subset of boxes
indicated in pale green. The remaining targets are all outside the relevant ball
of radius $\cR_L$ centered on $\y_j$.

For the case of a uniform distribution, note that
the number of boxes within the interaction range is $5^d$ at level $L+1$ rather
than $3^d$ boxes at level $l$. This reduces the number of distance calculations
per source from $3^d n_s$ to $5^d (n_s/2^d) = 2.5^d n_s$. 
Furthermore, the pale green boxes are well separated from $B_a$ and the residual 
kernel for targets in those boxes is smooth and easily approximated by a polynomial
of one variable in $r$ (or $r^2$). This avoids the need for evaluating special
functions (such as $\erf(r/\sigma)$) for these interactions.

\begin{figure}[!ht]
\centering
\includegraphics[width=4in]{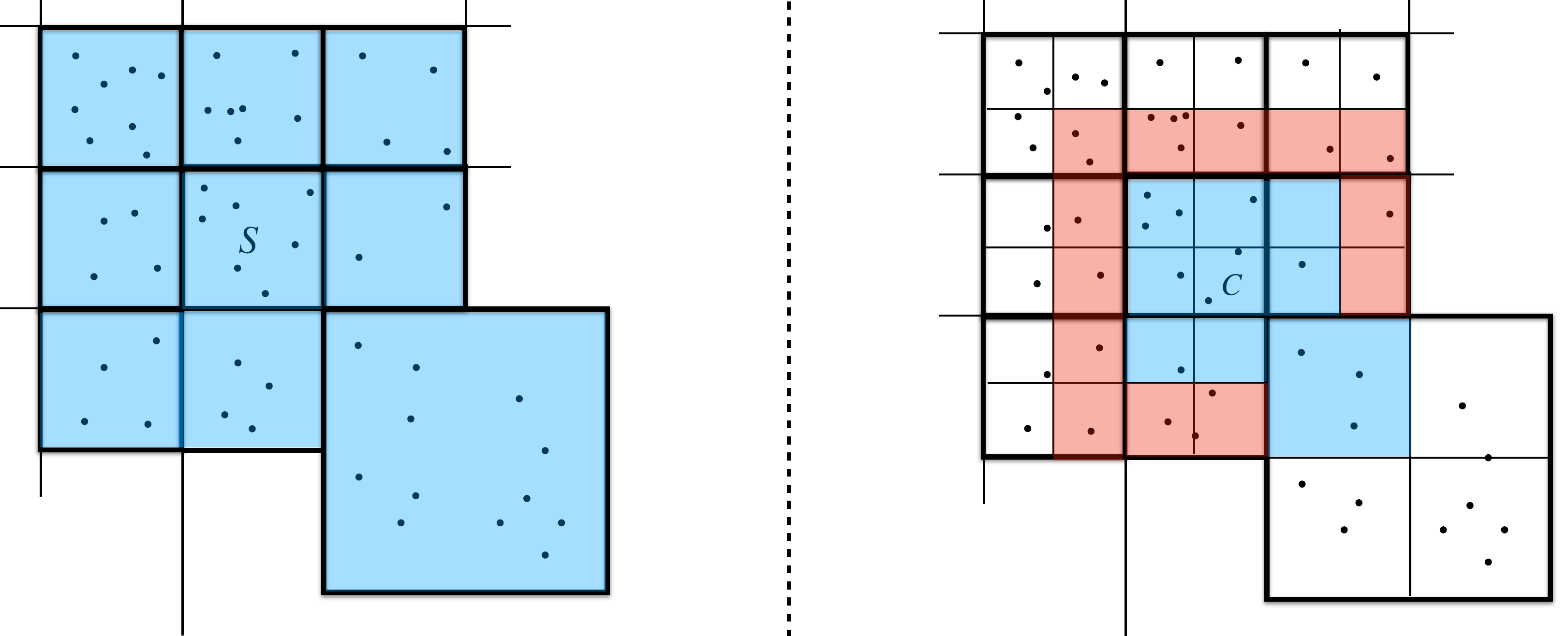}

\caption{\sf Interaction lists due to the residual kernel. Left: the original list $L_D$
  as in \cite{greengard2023arxiv}. The list $L_D$
  of a leaf box $S$ at level $l$ contains all target boxes that are its neighbors
  at level $l$ or leaf neighbors at level $l-1$. Right: the target boxes that
  are in the interaction
  range of a child box $C$ of $S$. Here, all leaf boxes are refined once more.
  Since the residual kernel $R_l$ is zero outside a ball of radius $|B_l|$,
  only blue and red boxes have nonzero interactions with the source box $C$ at level $l+1$.
  Furthermore, red boxes are well separated from $C$. Thus, the residual
  kernel is smooth for the interaction between red target boxes and the source box $C$.
  The residual kernel in this range can be well approximated by a low degree polynomial
  of $r^2$, avoiding the expensive calculation of special functions (including
  the square-root function). Finally, the self interaction is always carried out at
  level $l$ (i.e., the parent box $S$) to reduce cache misses in SIMD vectorization.}
\label{kernelevaluation}
\end{figure}

\subsection{The full \acron algorithm for discrete sources}

We assume we are given a level-restricted adaptive tree on input,
that has fewer than $n_s$ sources and targets in each leaf node.
Constructing such a tree requires $O(N \log N)$ work (assuming there are 
$O(\log N)$ levels in the tree hierarchy). See
\cite{biros2008sisc} for further details.
Each box $B$ in the data structure is then assigned some
logical flags. 

\begin{enumerate}
\item 
If $B$ is a leaf box, then $F_{\rm leaf}(B)=1$. Otherwise $F_{\rm leaf}(B)=0$. 
\item 
If $B$ contains more than $n_s$ sources, $F_{\rm out}(B)=1$.
Otherwise $F_{\rm out}(B)=0$. 
\item 
If any of $B$'s colleagues contain more than $n_s$ sources, $F_{\rm in}(B)=1$.
Otherwise $F_{\rm in}(B)=0$. 
\end{enumerate}

As noted above, we carry out one additional refinement for all boxes to accelerate
the direct interaction step. Thus, a box $B$ which was a leaf node in the given 
data structure has $F_{\rm leaf}(B)=1$. The children under this additional refinement
step are assigned $F_{\rm leaf}=0$.

\bigskip
\begin{center}
  \textbf{The discrete (\acron) algorithm}
\end{center}
\bigskip
{\em Comment} [On input, we are given a collection of $N$ sources and targets
  $\x_{i}$, $i=1,2,\ldots N$ and $\y_{j}$, $i=1,2,\ldots N$, 
  a prescribe precision $\epsilon$, and a parameter $n_s$ 
  (the maximum number of points in a leaf box).
  On output, we return the potential at all target locations.]
\bigskip

\begin{center}
  \textbf{Step 0: Initialization}
\end{center}
\bigskip

\begin{algorithmic}[1]
  \State{Build a level-restricted adaptive tree using the algorithm 
    (see, for example, \cite{biros2008sisc}).
    The root box is denoted by $B_0$, the maximum level is denoted 
   $L_{\rm max}$, and the total number of boxes is denoted $N_{\rm box}$.}
  \State{Initialize the potential $u(\x_i), i=1,\dots,N$ to zero.}
  \State{Calculate $\delta_1$ according to the prescribed precision $\epsilon$.}
  \State{Compute the Fourier expansion length $N_F^{(0)}$ for the windowed kernel $W_0$,
    the Fourier expansion length $N_F= (2n_f+1)^3$ for the difference kernels 
    $D_l, l=0,\ldots,L_{{\rm max}-1}$,
    and the polynomial
    approximation order $p$ in each dimension based on the prescribed precision $\epsilon$.}
  \State{Compute the flags $F_{\rm leaf}(B)$, $F_{\rm out}(B)$, $F_{\rm in}(B)$,
    for each box $B$ in the tree.}
  \State{Carry out one additional refinement for each leaf node (assigning these
    additional boxes the flags $F_{\rm leaf}(B) = F_{\rm out}(B) = F_{\rm in}(B) = 0$).}
\end{algorithmic}

\bigskip
\begin{center}
  \textbf{Step 1: Precomputation}
\end{center}
\bigskip
{\em Comment} [Create transformation and translation matrices.] 

\begin{algorithmic}[1]
  \State{Compute  the $2^d$ translation matrices $T_{\rm c2p}$ and
    $T_{\rm p2c}$ according to \cref{ctoplemma} and
    \cref{ptoclemma}.}
  \State{Compute the transformation matrix $T_{\rm prox2pw}$ converting proxy charges
    into the outgoing plane wave expansions from \eqref{outgoingexpansion2}
    and the transformation matrix $T_{\rm pw2poly}$ converting incoming plane wave expansions
    into local expansions, from \eqref{incomingexpansion2},\eqref{localexpansion}. 
    By translation invariance,
    only one matrix is needed for each level. Using separation of variables, it is
    easy to see that one only needs to store one-dimensional matrices of
    size $(2n_f+1) \times p$.}
  \State{Compute the translation matrices $T_{\rm pwshift}$ converting outgoing plane wave
    expansions of a source box into incoming plane wave expansions of
    its colleagues from \eqref{diagonaltranslation}. 
    There are $3^d$ of them for each level.}
\end{algorithmic}

\bigskip
\begin{center}
  \textbf{Step 2: Upward pass}
\end{center}
\bigskip
{\em Comment} [For each leaf box, form the proxy charges from the original sources.]

\begin{algorithmic}
  \For{each box $B_i$, $i=1,\ldots,N_{\rm box}$}
  \If{$F_{\rm leaf}(B_i)=1$}
  \State{Form proxy charges $\tilde{\rho}_{\bm}(B)$ from sources via \cref{proxynd}.}
  \EndIf
  \EndFor
\end{algorithmic}
\bigskip

\begin{algorithmic}
  \For{level $l=L_{{\rm max}-1},\ldots,0$}
  \For{each box $B$ at level $l$}
  \If{$B$ is a parent box}
  \State{Form the proxy charges from proxy charges in children using $T_{\rm c2p}$.}
  \EndIf
  \EndFor
  \EndFor
\end{algorithmic}

\bigskip
\begin{center}
  \textbf{Step 3: Downward pass}
\end{center}
\bigskip

\begin{algorithmic}
  \For{level $l=0,\ldots,L_{{\rm max}-1}$}
  \For{each box $B$ at level $l$} \Comment{Form outgoing expansions}
  \If{$F_{\rm out}(B)=1$}
  \State{Form the outgoing expansion $\Phi_{l}(B)$ for the difference kernel $D_l$ from
    the proxy charge expansion coefficients using
    $T_{\rm prox2pw}$.}
  \EndIf
  \EndFor
  \bigskip

  \For{each box $B$ at level $l$} \Comment{Form incoming expansions} 
  \If{$F_{\rm in}(B)=1$} 
  \For{each box $B_n$ in $C(B)$}
  \If{$F_{\rm out}(B_n)=1$}
  \State{Translate the outgoing expansion $\Phi_l(B_n)$ to the center of $B$
    and add to the incoming plane wave expansion $\Psi_l(B)$ using 
    $T_{\rm pwshift}$.}
  \EndIf
  \EndFor
  \EndIf
  \EndFor
  \bigskip

  \For{each box $B$ at level $l$} \Comment{Form local expansions}
  \If{$F_{\rm in}(B)=1$}
  \State{Convert incoming plane wave expansion $\Psi_l(B)$ to the local
    expansion $\Lambda_l(B)$ using $T_{\rm pw2poly}$.}
  \EndIf
  \EndFor
  \bigskip

  \For{each box $B$ at level $l$} \Comment{Split local expansions}
  \If{$F_{\rm in}(B)=1$}
  \For{each child box $C$ of $B$}
  \State{Translate and add the local expansion of $B$ to the local expansion of $C$.}
  \EndFor
  \EndIf
  \EndFor
  \bigskip

  \For{each box $B$ at level $l$} \Comment{Evaluate local expansions}
  \If{$F_{\rm leaf}(B)=1$}
  \State{Evaluate the mollified potential $u_{L}^{\rm far}$ at each target $\x$ in $B$.}
  \EndIf
  \EndFor
  \EndFor
\end{algorithmic}

\bigskip
\begin{center}
  \textbf{Step 4: Compute direct interactions}
\end{center}
\bigskip

\begin{algorithmic}
  \For{level $l=0,\ldots,L_{{\rm max}}$}
  \For{each box $B$ at level $l$}
  \If{$B$ is a leaf box}
  \State{Compute the local contribution to the potential $u_{l}^{\rm local}$
    at all targets in the colleagues of $B$
    due to all sources in $B$ directly and add to the potential.
    The search for targets in this step is accelerated by carrying out the 
    processing using the refined leaf nodes (see \cref{localaccel}).}
  \EndIf
  \EndFor
  \EndFor
\end{algorithmic}

\bigskip
\begin{center}
  \textbf{Step 5: Self-interaction corrections}
\end{center}
\bigskip

\begin{algorithmic}
  \For{$l=0,\ldots,L_{\rm max}$} 
  \For{each leaf box $B$ at level $l$} 
  \For{each target $\y_j$ in $B$}
  \State{If target $\x_i$ is coincident with a source $\y_j$, compute the 
   self-interaction term $u^{\rm self}(\x_i)$ in \eqref{laplacesum2self}
   and add to $u(\x_i)$.}
  \EndFor
  \EndFor
  \EndFor
\end{algorithmic}
\bigskip

\subsubsection{Complexity analysis}

Let $n_f$ be the Fourier expansion length of the difference and windowed kernels in each
dimension, and let $p$ be the polynomial approximation order for these kernels 
in each dimension. (In general, $n_f>p$, so that terms of the form 
$p n_f^d + p^2 n_f^{d-1} + \dots p^d n_f$ are dominated by $p n_f^d$.)
It is straightforward to verify that
\begin{itemize}
\item The construction of the proxy charges on leaf boxes requires $O(p^d)$ work per source.
\item The translation of proxy charges from child to parent requires $O(d p^{d+1})$ work
per box.
\item The evaluation of outgoing expansions from proxy charges requires $O(d p n_f^d)$ 
work per box.
\item The translation from outgoing expansions to incoming expansions requires $O(3^dn_f^d)$
  work per box.
\item The computation of local expansions from incoming expansions requires 
$O(d p n_f^d)$ work per box.
\item The translation of local expansions from parent to child requires $O(d p^{d+1})$ 
work per box.
\item The evaluation of the mollified potential requires $O(p^d)$ work per target.
\item The evaluation of the local potential requires $O(3^d n_s)$ work per target.
\end{itemize}
Thus, the total cost $C$ is given by
\be\label{discretemkdscost}
C = O(3^d n_s N + 2p^d N + 2d p^{d+1} N_{\rm box} + 2d p n_f^d N_{\rm box} + 3^d n_f^d N_{\rm box}).
\ee
In the case of uniform distributions, $N_{\rm box}$ is bounded by $2N/n_s$, leading to $O(N)$
complexity. For general distributions, the proof of linear complexity (after sorting)
is similar to that in \cite{nabors1994sisc} for the FMM.
BLAS level 3 subroutines such as {\rm DGEMM} and {\rm ZGEMM}. 

\begin{remark}
The first term on the right hand side of \cref{discretemkdscost} 
involves the evaluation of special functions and has a large prefactor implicit in the
$O(N)$ notation.  All of the other terms have very small
associated constants and many  of the steps are compatible with optimized linear
algebra (BLAS) routines.
As observed above, the prefactor $3^d$ 
in the direct evaluation step with residual kernels can be accelerated
by looking only within a sphere of radius $\cR_L$ centered on a source at level $L$.
This reduces the volume to $\pi$ instead of $9$ in 2D and $4\pi/3$ instead of $27$ in 3D.
In practice, this makes it harder to use ``SIMD" vectorization on modern
architectures.
\end{remark}

\begin{remark}
The parameter $n_s$ should be chosen so that the cost of local interactions 
and the cost of all other steps are roughly balanced.
In our implementation, we optimize the selection of $n_s$ by experimentation with 
sources distributed on a surface. 
\end{remark}

\subsection{Modification of \acron for continuous sources}\label{sec:continuoussources}

We now consider the evaluation of the volume potential 
\eqref{volumepotential} for the specific case of the Laplace kernel:
\be\label{laplacevolumepotential}
u(\x)=\int_{B_0} \frac{1}{|\x-\y|}\rho(\y)d\y,
\ee
where $B_0$ is the unit cube $[-1/2,1/2]^3$, and the density $\rho$ is assumed to be smooth
on $B_0$. 
The first task is to build a level-restricted adaptive tree that resolves the density $\rho$
to precision $\epsilon$. That is, we keep refining the tree until
the density is approximated by a tensor product
polynomial of degree $\le q-1$ on each leaf box $B$ to the desired precision:
\be \label{rhoapprox}
\rho(\x)\approx \sum_{\bj \in [1,\dots,q]^d} c_{\bj} 
 P_{\bj-{\bf 1}}(\x), \quad \x\in B,
\ee
where we have switched from Chebyshev to Legendre polynomials 
for convenience when computing integrals (see \cref{sec:interp}).

\subsubsection{Creating proxy charges}

The changes to the code in terms of the far-field interactions are very minor.
We focus on the difference kernel $D_l$ here. 
The windowed kernel at level 0 is treated in the same way. 
For a leaf box $B$, we 
need to evaluate integrals of the form
\be\label{dkintegral}
u_{l}^{\rm diff}(\x) = \int_B D_l(|\x-\y|) \rho(\y) d\y.
\ee
We have seen above that $D_l$ is well-approximated by a polynomial of order
$p$. Assuming $q\le p$, the integral in \cref{dkintegral} can
be computed accurately using tensor product Gauss-Legendre quadrature of order $p$
since it is exact for polynomials of degree up to $2p$. That is,
\be\label{dkintegral2}
u_{l}^{\rm diff}(\x) \approx \sum_{\bn \in [1,\dots,p]^d}
\left(\frac{\cR_l}{2}\right)^d w_{\bn}
D_l(\x,\bq_{\bn})\rho(\bq_{\bn}),
\ee
where $\cR_l$ is the linear dimension of $B$, the points
$\{ \bq_{\bn} \}$ are the tensor product Legendre nodes (see \cref{sec:interp}),
and $w_{\bn}$ are the weights of the 
standard tensor-product Gauss-Legendre quadrature rule
scaled to $B$. Thus, the proxy charges for a leaf node are 
\[ \tilde{\rho}_{\bn} = 
\left(\frac{\cR_l}{2}\right)^d w_{\bn}
\rho(\bq_{\bn}).
\]
This requires evaluation of the Legendre expansion \eqref{rhoapprox}
at the $p^d$ tensor-product Legendre nodes at a cost of $O(q p^d)$ work using
separation of variables - much less work than in the discrete case.

Likewise, when evaluating the local expansions in Step 3 of the algorithm, 
the targets lie on a tensor-product
grid, so that the cost is of the order $O(q p^d)$ work rather than $O(q^d p^d)$.
Amortized over the $q^d$ discretization points needed to represent $\rho(\x)$, this
amounts to a cost of the order $O(N q (p/q)^d)$ work instead of $O(N p^d)$ - a significant
reduction.

\subsubsection{Approximation of the local interactions}

Recall that, in the discrete setting, the direct calculations involve sums of
residual kernel interactions $R$~\cref{kerneldef}. In the continuous setting,
these interactions involve integrals of singular functions and require some care.
We will make use of the integral representation
\be\label{localkernel}
R(r)=\frac{\erfc(r/\sigma)}{r}=\frac{2}{\sqrt{\pi}}\int_{1/\sigma}^\infty e^{-r^2t^2}dt
=\frac{1}{\sqrt{\pi}}\int_0^{\sigma^2} \frac{e^{-r^2/s}}{s^{3/2}} \, ds \, ,
\ee
which can be derived from \eqref{laplacesog} and the definition of 
$\erfc$.  We have dropped the level index $L$ in the residual kernel here,
since it is simply determined by the choice $\sigma = \sigma_L$.
The formulas in \eqref{localkernel} must be understood in a distributional sense,
which is justified here since what we actually need to compute is 
integrals of the following form:
\be\label{lkintegral}
u^{\rm local}(\x) = \int_B R(|\x-\y|)\rho(\y)d\y.
\ee
Without loss of generality, let $B$ be the unit cube centered at the origin with $\sigma$
chosen according to \eqref{deltavalue} as
\be\label{deltavalue2}
\sigma\approx \frac{1}{\sqrt{\log(1/\epsilon)}}.
\ee

For readers familiar with FMM-based volume integral codes, recall that the local interactions
are computed by tabulating integrals of the Green's function $1/r$ with each of $q^d$
polynomial basis functions at each of $q^d$ points within the box or its neighbors.
Thus, the work required is of the order $O(q^{2d})$. 
In the \acron framework, we seek to exploit the compact support of the 
residual kernel and the formulas in 
\cref{localkernel}, which we can interpret as
a continuous sum-of-Gaussians (SOG) representation for $R(r)$ involving more and more
sharply peaked Gaussians as $\sigma \rightarrow 0$.
We will actually truncate the integrals in \eqref{localkernel}, and write
\be\label{localkernel2}
R(r)
= \frac{2}{\sqrt{\pi}}\int_{1/\sigma}^{1/\sigma_{\rm min}} e^{-r^2t^2}dt
= \frac{1}{\sqrt{\pi}}\int_{\sigma_{\rm min}^2}^{\sigma^2} \frac{e^{-r^/s}}{s^{3/2}} \, ds \, ,
\quad r\in [\epsilon_0,1],
\ee
where $\epsilon_0$ is a free parameter and
\be
\sigma_{\rm min}=\sigma \epsilon_0 \approx \frac{\epsilon_0}{\sqrt{\log(1/\epsilon)}}.
\ee
It is easy to check that for $r\in [\epsilon_0,1]$ the resulting error in \eqref{localkernel2}
is of the order $O(\epsilon)$.
We then seek a discrete SOG approximation of $R(r)$ of the form
\be\label{sogapprox}
R(r) \approx \sum_{i=1}^{n_g} w_i e^{-r^2 t_i^2}, \quad r\in [\epsilon_0,1],
\ee
The choice of $\epsilon_0$ is somewhat involved and discussed below. Given $\epsilon_0$, however, 
there are many ways of constructing the SOG approximation in \cref{sogapprox}. See, for example,
\cite{beylkin2010acha} or the black-box algorithm in 
\cite{greengard2018sisc}. Combined with 
the application of generalized Gaussian quadrature 
\cite{ggq2,ggq3,ggq1}, one obtains an SOG approximation with a number
of terms of the order
\be
n_g=O(\log(1/\epsilon)).
\ee
Combining \cref{sogapprox} and \cref{lkintegral}, we obtain
\be
u^{\rm local}(\x) = u_1^{\rm local}(\x) + u^{\rm asymp}(\x),
\ee
where 
\be
\label{lkintegral2}
u_1^{\rm local}(\x) =
\sum_{i=1}^{n_g} w_i \int_B e^{-|\x-\y|^2t_i^2} \rho(\y)d\y
\ee
and
\be\label{asymptoticterm}
u^{\rm asymp}\approx \int_{|\x-\y|\le \epsilon_0} \left(\frac{\erfc(|\x-\y|/\sigma)}{|\x-\y|}-
\sum_{i=1}^{n_g} w_i e^{-|\x-\y|^2t_i^2}\right) \rho(\y) d\y.
\ee
For $u_1^{\rm local}(\x)$,
each term in \cref{lkintegral2} can be computed using separation of variables 
in $O(d q^{d+1})$ operations. 
For the contribution $u^{\rm asymp}(\x)$,
one can carry out the calculation explicitly using a Taylor expansion of the density 
$\rho(\y)$. For illustration, using a fourth order Taylor series and integrating term by
term, we obtain
\be\label{asymptoticterm2}
\ba
u^{\rm asymp}(\x) &= 4\pi \rho(\x) \left(I_0 - \sum w_i \int_0^{\epsilon_0}e^{-r^2\delta_i}r^2dr\right) \\
&+\frac{2\pi}{3} \Delta \rho(\x)
\left(I_2 - \sum w_i \int_0^{\epsilon_0}e^{-r^2/\delta_i}r^4dr\right) \\
&+\frac{\pi}{6} \Delta^2 \rho(\x)
\left(I_4 - \sum w_i \int_0^{\epsilon_0}e^{-r^2/\delta_i}r^6dr\right) +O(\epsilon_0^8),
\ea
\ee
where $I_n, n=0,2,4$ are given by the formula
\be
I_n=\int_0^{\epsilon_0} \erfc(x/\sigma)x^{n+1} dx,
\ee
and the error estimate $O(\epsilon_0^8)$ comes from the integration over the ball of radius $\epsilon_0$
of the error term in the Taylor series approximation (of order $O(\epsilon_0^6)$) multiplied
by $1/r$. 
The integrals in \eqref{asymptoticterm2} are easily computed in terms of the error function.
We are now able to choose $\epsilon_0$. We simply require (assuming the fourth order
approximation for $\rho$) that $\epsilon_0^8 = O(\epsilon)$ for
precision $\epsilon$.

To summarize, the cost of local interactions is of the order $O(\log(1/\epsilon) dq^{d+1})$
for each pair of boxes, leading to a cost of approximately
$O(3^d d q\log(1/\epsilon))$ per grid point.
In place of \cref{discretemkdscost}, the total
cost of the \acron algorithm for continuous sources is
\be\label{continuousmkdscost}
C=O(3^d dq N + 2(p/q)^{d-1} N + 2d p^{d+1} N_{\rm box} + 2d p n_f^d N_{\rm box} + 3^d n_f^d N_{\rm box}),
\ee
Here, $N$ is the total number of grid points:
\be\label{gridnumber}
N= N_{\rm leaf} q^d,
\ee
where $N_{\rm leaf}$ is the number of leaf boxes in the tree. In the tree construction
for a continuous density, it is easy to check that
the total number of boxes in the tree satisfies the bound
\be\label{boxnumber}
N_{\rm box}\le \frac{1}{1-2^{-d}}N_{\rm leaf}.
\ee
Combining \cref{continuousmkdscost}, \cref{gridnumber}, and \cref{boxnumber}, we obtain
\be\label{continuousmkdscost2}
C=O([3^d dq + 2(p/q)^{d-1} + 2d p (p/q)^{d} + 2d p (n_f/q)^d + 3^d (n_f/q)^d] \, N).
\ee

As noted above,
when compared with FMM-based volume integral approaches, the advantage of the 
present method is the expression of the residual kernel in terms of a modest
number of Gaussians,
permitting separation of variables acceleration in the local interactions and a dramatic
reduction in the storage of local tables (since all of the needed matrices are one-dimensional).

\subsection{Optimization of kernel splitting using prolate spheroidal wave functions}
\label{sec:betterkernelsplitting}

The last general improvement we introduce in the \acron framework
is a reduction in the number of quadrature nodes needed in \eqref{differencekernelft2a}.
For this, we will abandon the kernel splitting in
\cref{kerneldef}, which is closely related to the integral representation \cref{laplacesog}
which makes essential use of the fact that a Gaussian is rapidly decaying in both physical
and Fourier space. 

From the discussion thus far, the reader will note that the essential features
of kernel splitting aren't specific to the Gaussian, but assume that
\begin{enumerate}[label=(\alph*)] 
\item the windowed kernel is smooth and admits an efficient Fourier transform,
\item the difference kernels $D_l(r)$ are smooth, admit efficient Fourier transforms,
  and are compactly supported (to precision $\epsilon$)
  in the ball of radius $\cR_l$,
  where $\cR_l$ is the side length of the boxes at level $l$, and
\item the residual kernels are compactly supported (to precision $\epsilon$)
  in the ball of radius $\cR_l$.
\end{enumerate}

Fixing our domain to be the interval $[-1,1]$, note that 
in order for a Gaussian of the form $e^{-\delta^2 r^2}$
to decay to $\epsilon$ outside the interval $[-1,1]$, we need to choose
\be
\delta = \sqrt{\log(1/\epsilon)}.
\ee
By \cref{gaussiankernel}, the corresponding Fourier transform can be windowed at
\be\label{gaussiankmax}
K_{\rm max}=2\log(1/\epsilon).
\ee
Thus, it is worth considering other options, such as $\psic$, the first prolate
spheroidal wave function (PSWF) of order zero, discussed in \cref{sec:windownfunctions}.
For $\psic$ to decay to $\epsilon$ outside $[-1,1]$, we have that
\be\label{pswfcvalue}
c\approx \log(1/\epsilon).
\ee
By \cref{pswffouriertransform}, the Fourier transform of $\psic$ is
$\lambda_0 \psic(k/c)$, i.e., its Fourier transform can be windowed at
\be\label{pswfkmax}
K_{\rm max}=c\approx  \log(1/\epsilon).
\ee
Comparing \cref{gaussiankmax} with \cref{pswfkmax}, we find that
the number of Fourier modes needed to approximate $\psic$ is roughly
half that needed for the corresponding Gaussian. In $d$ dimensions, this
results in a reduction of the total number of Fourier modes by a factor of $2^d$, 
all else being equal. In \cref{pswfgaussian}, we plot the PSWF of order zero $\psic$ and
the corresponding Gaussian at six digits of accuracy on the interval $[-1,1]$,
as well as their Fourier transforms. 

\begin{figure}[!ht]
\centering
\includegraphics[height=40mm]{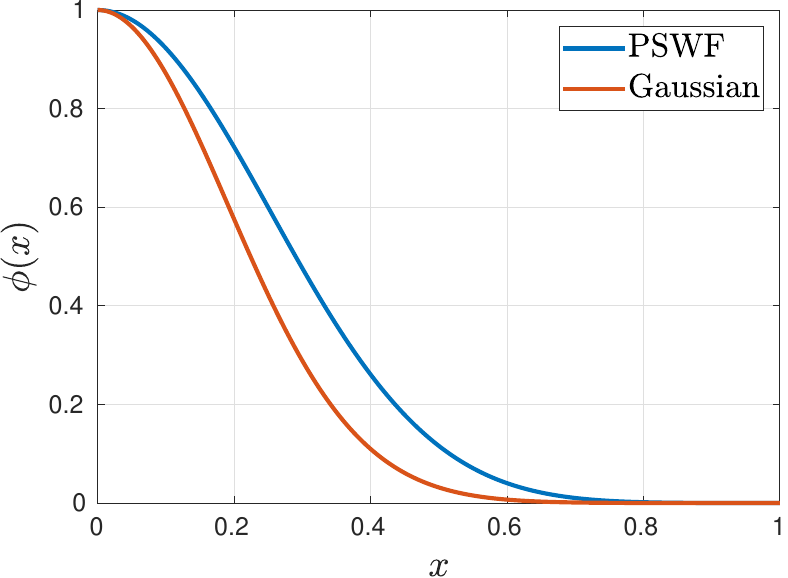}
\hspace{0.4in}
\includegraphics[height=40mm]{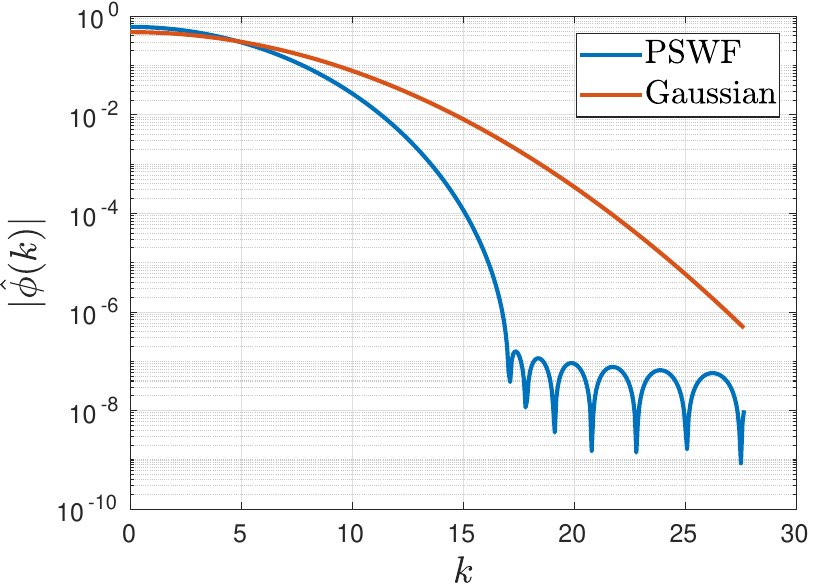}
\caption{\sf Comparison of the Gaussian with the PSWF of order zero.
  Left: the scaled PSWF of order zero $\psic(x)/\psic(0)$ with $c=16.893999099731445$
  and the Gaussian $g(x)=e^{-\delta^2 x^2}$ with $\delta=\sqrt{\log(10^6)}$.
  The parameters $c$ and $\delta$ are chosen such that $\psic(1)=g(1)=10^{-6}$.
  Right: the Fourier transforms of the Gaussian and $\psic$. 
  The Fourier transform of the latter decays much more rapidly
  than that of the former, requiring many fewer terms in the spectral approximation.}
\label{pswfgaussian}
\end{figure}

Using the function $\psic$, we propose an alternative kernel splitting. Letting
\be
c_0 = \int_{0}^1 \psic(x)dx = \frac{1}{2}\hpsic(0),
\ee
and
\be
\Phi^c_l(r) = \frac{1}{c_0} \int_0^{r/\cR_l}\psic(x)dx,
\ee
the new windowed, difference, and residual kernels are
defined by the formulas
\be\label{newkernels}
\ba
W_0(r) &= \frac{\Phi^c_0(r)}{r},\\
D_l(r) &=\frac{\Phi^c_{l+1}(r)-\Phi^c_{l}(r)}{r},\\
R_L(r) &=\frac{1-\Phi^c_L(r)}{r}.
\ea
\ee

The corresponding splitting of the $1/r$ kernel is (repeating \eqref{mlks} here
for convenience):
\[
\frac{1}{r} = W_0(r) + \sum_{l=0}^{L-1} D_l(r) + R_L(r), \quad L=0, 1, \ldots, L_{\rm max},
\]
which holds for any level $L$ and any target in the unit box.

Since $\Phi^c_l(x)=1$ for any $x \geq \cR_l$  by construction (to the desired
precision),
the difference and residual kernels in \cref{newkernels} are compactly supported
on balls centered at the origin of radius $\cR_l$ and $\cR_L$, respectively.
Since $\psic$ is an even function,
both the windowed and difference kernels are smooth at the origin.
Admittedly, the difference kernel $D_l(r)$ is not smooth at $r=\cR_{l+1}$ since
$\psic(r/\cR_{l+1})$ has decayed only to $\epsilon$ and is extended beyond that
range to zero. For the purpose of computation, however, this introduces an error
of the order $O(\epsilon)$ in both physical and Fourier space and can be safely ignored.
In short, the difference kernel
admits an efficient Fourier transform and the new kernel splitting satisfies the 
three properties in \cref{sec:betterkernelsplitting}.

From the formulas in \cref{sec:windownfunctions}, it is straightforward to show that
the Fourier transform of the difference kernel is
\be\label{differencekernelft3}
\ba
\hat{D}_l(k)
& = \frac{4\pi}{\psic(0)}\frac{\psic(|k|\cR_{l+1}/c)-\psic(|k|\cR_l/c)}{|k|^2},
\quad |k|\ne 0,\\
\hat{D}_l(0)&=\frac{2\pi c_2}{c_0}\left(\cR_l^2-\cR_{l+1}^2\right), \quad c_2=\int_0^1 x^2\psic(x)dx.
\ea
\ee
and that the Fourier transform of the windowed kernel is
\be\label{windowedkernelft2}
\widehat{W}_0(k) = 4\pi \left(\frac{\sin((1+C)\cR_0k/2)}{k}\right)^2 \hat{\phi}^c_{0}(|k|).
\ee

\cref{tablepswfparameters} lists the actual values of $c$ in $\psic$, 
the number of quadrature nodes $N_1 = (2n_f+1)$ needed to discretize the Fourier
transform for the difference kernels, and
the value $p$ used to define the order of the polynomial representing
the far field potential as well as the number of proxy
charges in each linear dimension.
We also show the grid spacing needed for the 
trapezoidal rule discretization of the difference kernels in the Fourier domain.

The cost of computing the 
residual kernels is more or less identical, since 
the evaluation of either $\Phi^c_l$ or $\erfc$ is most efficiently done 
by polynomial (or piecewise polynomial) approximation of the kernel from 
a pre-computed, one-dimensional table of values
using Estrin's method~\cite{estrin1960}. The number of terms required in the
polynomial approximation of $\Phi_l$ and $\erfc$ are more or less identical.

\begin{table}[t]
  \caption{\sf Parameters used in the PSWF kernel splitting for the 3D Laplace
    kernel. $\epsilon$ is the desired precision, $c$ is the computed PSWF parameter
    for the indicated value of $\epsilon$, $N_1$ is the total number of Fourier modes
    in each dimension for $\psic$, $p$ is the polynomial approximation order needed,
    and $h_0$ is the grid spacing in Fourier space for the difference
    kernel $D_0$, respectively.
    At level $l$, $h_l = 2^l h_0$ and $K_{max} = 2^l \, c$.
    Note that $h_0$ is very close to the analytic formula \cref{fourierspacing}, 
    namely $4 \pi/3$. For comparison, we also list $N_1^{\rm G}$ (the total number
    of Fourier modes in each dimension for the original splitting using Gaussians) and 
    $p^{\rm G}$ (the associated polynomial approximation order for the difference kernels)
    in the last two columns.}
\centering
\begin{tabular}{c|c|c|c|c|c|c|c}
\toprule
$\epsilon$ & $\log(1/\epsilon)$ & $c$ & $N_1$ & $p$ & $h_0$ & $N_1^{\rm G}$ & $p^{\rm G}$\\
\midrule
$10^{-3}$  & $6.9$  & $7.2462000846862793$ & 13 & 9  & $1.3240\pi$ & 22 & 16 \\
$10^{-6}$  & $13.8$ & $13.739999771118164$ & 25 & 18 & $1.3372\pi$ & 44 & 30 \\
$10^{-9}$  & $20.7$ & $20.736000061035156$ & 39 & 28 & $1.3250\pi$ & 66 & 46 \\
$10^{-12}$ & $27.6$ & $27.870000839233398$ & 53 & 38 & $1.3354\pi$ & 88 & 62 \\
\bottomrule
\end{tabular}
\label{tablepswfparameters}
\end{table}

\section{The \acron framework for general kernels}

Most of the steps in the \acron algorithm are both dimension-independent and kernel-independent,
since the mollified interactions are dealt with using the 
Fourier transform and polynomial approximation. 
For each kernel, however, the user must develop a telescoping series the requisite 
properties in \cref{sec:betterkernelsplitting}.
For each such kernel-splitting formalism, its Fourier transform must be obtained and
an accurate estimate of the quadrature for the spectral representation.
While we do not attempt to provide a complete theory here, we show by example that
the \acron framework is applicable and effective for a broad class of kernels. 
Note that, once the kernel is expressed as the
sum of a windowed kernel and a residual kernel based on a parameter $\sigma$,
the difference kernels can simply be define as the
difference of windowed kernels at two successive scales (as in \cref{windowedkernelL}).

\subsection{Smooth kernels}

If the kernel function $K(r)$ is smooth, then kernel splitting is particularly simple.
Letting $w(x)$ be a smooth bump function supported on $[-1,1]$ to the desired precision
(such as a suitably scaled Gaussian or PSWF), we can simply write 
\be\label{smoothkernelsplitting}
K(r) = M_l(r)+R_l(r) \coloneqq K(r) (1-w(r/\sigma_l)) + K(r) w(r/\sigma_l).
\ee
Here, 
$M_l$ is the mollified kernel and $R_l$ is the residual kernel, which is compactly supported
in the ball of radius $\sigma_l$.
$\sigma_l$, as in the \acron method described in detail above, is a parameter that
depends on the precision
$\epsilon$ and level $l$. The efficiency of the splitting depends largely on two factors:
how efficiently can we compute the residual kernel $R_l$ and
how many points are needed in the spectral representation of the 
difference kernel $D_l=M_{l+1}-M_{l}$. 
These are kernel-specific issues for which it is difficult to develop a general theory,
although both theory and numerical experiments suggest that 
the PSWF $\psic$ is a better choice of a window function than a Gaussian (see the
discussion in \cref{sec:betterkernelsplitting}).

\subsection{Singular, slowly decaying kernels}

Let us now consider long range, translation invariant kernels $K(r)$ which have a 
singularity at the origin. By duality, their
Fourier transforms are also singular at or near the origin and
slowly decaying. 
The essential requirement in constructing a hierarchical splitting scheme with the
properties enumerated in \cref{sec:betterkernelsplitting} is to find a sequence of
mollified kernels $S_l(r)$ with two properties:
\begin{enumerate}
\item $S_l(r)$ is smooth 
\item $S_l(r) = K(r)$ for $r\ge \cR_l$ to the requested
precision $\epsilon$. 
\end{enumerate}

There are a number of methods one can use for this purpose, illustrated
with specific examples in the following sections, where we exploit
known integral representations, sum-of-Gaussian approximations, etc.
If the Fourier transform $\widehat{K}(k)$ is known, 
one can also multiply by a suitable sequence of ``filters" in the Fourier domain, yielding
a fairly general approach.  We present an example in \cref{fsplitpwsf}.

In this section, we point out that there are purely numerical methods which can be used
for this construction as well, without knowledge of the Fourier transform.
One of these is to define $S_l = K(r)$ for $r\ge \cR_l$, and to use a smooth extension
algorithm to fill in function values on $[0,\cR_l]$ to enforce some degree of smoothness,
such as with the function extension method in 
\cite{epstein2022arxiv} which can enforce $\mathcal{C}^n$ smoothness. For $n$ sufficiently
large, the Fourier transform can be computed numerically and 
decays rapidly enough that the trapezoidal rule is highly accurate.

One could also construct a sequence of smooth kernels $S_l(r)$ by convolution 
in physical space with a hierarchy of carefully chosen Gaussians or other 
{\em approximations to the identity} \cite{stein1971}. We do not investigate these
options in the present paper, focusing on the kernels for which other approaches are
easily available.

Once we have constructed $S_l$, the kernel splitting takes the standard form
\be\label{generalkernelsplitting2}
K(r) = W_0(r) + \sum_{l=0}^{L-1}D_l(r) + R_L(r), \quad L=0,1,\ldots,L_{\rm max},
\ee
where
\be\label{generalkernels}
\ba
W_0(r) &= S_{0}(r) w(r),\\
D_l(r) &= S_{l+1}(r)-S_l(r),\\
R_L(r) &= K(r)-S_L(r).
\ea
\ee

Since the difference kernels $D_l$ are compactly supported in physical space, they  
are smooth in Fourier space (by the Paley-Wiener theorem)
and the trapezoidal rule is spectrally accurate.
For $W_0(r)$, we also need a smooth window function
$w(r)$ such that $w(r)=1$ for $|r|\le \sqrt{d}\cR_0$
and $w(r)=0$ for, say, $r>2\cR_0$. Such a window function enforces rapid decay of the Fourier
transform of $W_0$. In short, kernel splitting in the general case involves
hierarchical smoothing coupled with windowing at the coarsest level.

\subsection{Kernels with known integral representations}

In \cref{sec:integralrepresentation}, we have listed a collection of
kernels for which there are known integral
representations involving Gaussians. Each of these integral representations provides a
natural formalism for kernel-splitting.
In general, suppose that the kernel $K(r)$ admits the following integral representation:
\be \label{kintrep}
K(r)=\int_0^\infty s(t)e^{-r^2t^2}dt
\ee
for some weight function $s(t)$.
(The integral representations
listed in \cref{sec:integralrepresentation} are all of this form.) 
Then the multilevel kernel splitting of $K(r)$ can be accomplished using Gaussians:
\be\label{generalkernelsplitting}
K(r)=M_0(r)+\sum_{l=0}^{L-1}D_l(r)+R_L(r), \quad L=0,1,\ldots,L_{\rm max},
\ee
where
\be
\ba
M_0(r)&=\int_0^{\delta_0}w(t)e^{-r^2t^2}dt,\\
D_l(r)&=\int_{\delta_{l}}^{\delta_{l+1}} w(t)e^{-r^2t^2}dt,\\
R_L(r)&=\int_{\delta_{L}}^\infty w(t)e^{-r^2t^2}dt,
\ea
\ee
with $\delta_l = 1/\sigma_l$ and $\sigma_l$ is given by \eqref{deltavalue}.

\begin{remark} \label{delta_vs_sigma}
The parameters $\delta_l$ get {\em larger} as one moves to finer levels,
whereas the parameters $\sigma_l$ get smaller. 
More precisely, 
$\sigma_l$ is the length scale of a box and, in some respects, more natural. 
Many integral representations, however,
are more easily written in the form \eqref{kintrep}, and we will use the inverse
length scale $\delta_j$ for many kernels below.
\end{remark}

Care must be taken when constructing an efficient discrete
Fourier approximation of $M_0$. While $M_0$ is smooth in  
physical space, it may still have a far field which is not rapidly decaying, so that
its Fourier transform could have some singular structure (typically at the origin for 
non-oscillatory kernels).
One approach to finding a windowed function with the same value in the computational
domain (say the unit box) is to calculate the Fourier representation of a truncated version
of $M_0$, with the truncation beyond the domain of interest.
This is a topic discussed extensively in \cite{vico2016jcp} and used 
above for the $1/r$ kernel (see \cref{vicoremark}). Other specific examples will be discussed
shortly.

\subsection{Sum-of-Gaussians approximation}

When an integral representation for a kernel is available in terms of Gaussians,
as in the preceding section, finding a telescoping approximation is simply a matter of
quadrature. When no such representation is available, however, one can begin
by constructing an approximation of the form 
\be\label{sogapproximation}
K(r)=
\begin{cases}
\sum_{i=1}^{n_g} w_i e^{-r^2t^2_i}, & r\in [\epsilon_0,\sqrt{d}r_0], \\
\sum_{i=1}^{n_g} w_i e^{-r^2t^2_i} + K_{loc}(r) & 0 < r < \epsilon_0. 
\end{cases}
\ee
where $r_0$ is the side length of the box $B_0$. The cut-off $\epsilon_0$ is generally 
needed since many kernels of interest are singular at the origin.

Constructing such a sum of Gaussians (SOG) approximation on an interval bounded away from the
origin is a well-studied (but nonlinear) task and
closely related to approximation
by sums of exponentials (SOE) and rational approximation and we refer the reader to 
the literature for further details (see, for example,
\cite{beylkin2010acha,glover1984ijc,jiang2013jsc}).
Recently, attempts have been made to permit the ``black-box" construction of SOG approximations
(see, for example, \cite{gao2022jsc,greengard2018sisc}).

Given the SOG approximation valid on $[\epsilon_0,\sqrt{d} r_0]$,
kernel splitting can be achieved by defining the
windowed, difference, and residual kernels as follows:
\be\label{sogkernels}
\ba
W_0(r) &= \sum_{t_i\le \delta_0} w_i e^{-r^2t^2_i},\\ 
D_l(r) &= \sum_{\delta_l\le t_i\le \delta_{l+1}} w_i e^{-r^2t^2_i},\\ 
R_L(r) &= \sum_{t_i\ge \delta_L} w_i e^{-r^2t^2_i}. 
\ea
\ee
Clearly, the difference and residual kernels are compactly supported with the
correct support size (since $\delta_l = 1/\sigma_l$). 
Moreover, the windowed and difference kernels are smooth and admit
efficient Fourier approximations. 

More importantly, for continuous sources, 
we can exploit the representation of the residual kernel 
as a sum of Gaussians, as we did in \cref{sec:continuoussources}, and accelerate
the computation of local interactions using separation of variables.
When $t_i$ is very large, however, we can do even better.
and compute the local interactions asymptotically but with controlled precision.
For this, suppose we have expanded the density $\rho$ as a Taylor
series. We may then replace the finite domain of integration 
(over a leaf node and its colleagues)
by integration over $\mathbb{R}^d$ for any
individual Gaussian $e^{-r^2t^2_i}$ since the Gaussian has decayed
to $\epsilon$ outside the colleagues at level $L$ by construction (that is,
$t_i \geq \delta_L$).
To sixth order in $\sigma$, it is easy to check that
\be
\int_{\mathbb{R}^d}e^{-|\x-\y|^2/\sigma}\rho(\y)d\y =(\pi \sigma)^{d}
\left(\rho(\x) + \frac{\sigma^2}{4}\Delta \rho(\x) + \frac{\sigma^4}{32} \Delta^2 \rho(\x)
+ O(\sigma^6)\right).
\ee
Estimating the error term,
the asymptotic expansion is accurate to the desired precision as soon as 
$\left(4\sigma^2/\cR_l^2\right)^3 < \epsilon$, where $\sigma=1/t_i$.
Thus, for values $t_i$ in \eqref{sogkernels} that satisfy this criterion, we can bypass
the table-based, separation of variables method to convolve with the Gaussian kernel
and replace it with a calculation
involving only a few floating point operations per target (once the Laplacian and bi-Laplacian have
been applied to $\rho$).

We also need to account for the contribution of the local correction kernel $K_{loc}$ in 
\cref{sogapproximation}, which is supported
on $[0,\epsilon_0]$:
\be
\ba
&\int_{B_0} K_{loc}(\x-\y) \sigma(\y)d\y  \\
&= \int_{\x+B_{\epsilon_0}} K_{loc}(r) \sigma(\x+\y)d\y.
\ea
\ee
It is easy to see that the above integral can be reduced to a set of one-dimensional integrals
on $[0,\epsilon_0]$ by expanding the density as a Taylor series in spherical coordinates
and having access to integrals of the form
\be
\int_0^{\epsilon_0} K_{loc}(r) r^{k} \, dr
\ee
These integrals are easily computed on the fly at negligible cost.

For discrete sources, \cref{sogapproximation} can still be used for kernel splitting,
but the local interactions become expensive (without significant precomputation work)
because of the large number of terms needed to evaluate the residual kernel.

\subsubsection{Sum-of-Gaussians approximation for the Yukawa kernel}

In general, one may obtain nearly optimal SOG approximation for a given kernel
whose range is restricted to the computational domain by using generalized 
Gaussian quadrature~\cite{ggq1,ggq2,ggq3}. This is particularly effective if
this can be done in a precomputation step so that it does not dominate the total
computational cost. For any given power function, this is easy to do (storing
only the values $\{ w_i, t_i \}$ in \eqref{sogapproximation}).
For the Yukawa kernel, this is not so straightforward because the tables are distinct
for each value of the parameter $\lambda$ in the kernel. 
Thus, we seek an SOG approximation that can be computed 
on the fly at negligible cost. For this, we use the
change of variable $u=e^t$ in \cref{yukawasog}, which leads to
\be\label{yukawasog2}
\ba
\frac{1}{2\pi} K_0(\lambda r)
& = \frac{1}{2\pi}\int_{-\infty}^\infty e^{-r^2e^{2u}-\frac{\lambda^2e^{-2u}}{4}}du,
\quad \x\in\mathbb{R}^2,\\
\frac{1}{4\pi} \frac{e^{-\lambda r}}{r}
& = \frac{1}{2\pi^{3/2}}\int_{-\infty}^\infty e^{-r^2e^{2u}-\frac{\lambda^2e^{-2u}}{4}+u}du,
\quad \x\in\mathbb{R}^3.
\ea
\ee
Note that the integrands in \cref{yukawasog2} are exponentially decaying at
$\pm\infty$ so that the windowed trapezoidal rule converges with spectral accuracy.
When the parameter $\lambda$ is very small, a more efficient optimization procedure related
to the modified Prony method in \cite{beylkin2010acha} can be used to reduced the 
number of Gaussians for $u \in (-\infty,0]$.

\subsection{Kernel splitting of the general power function using PSWFs} 

Since, as we saw for the case $1/r$, it is generally more efficient to carry out kernel
splitting with prolate spheroidal wave functions rather than Gaussians, we consider
how to do so for the general power function. Other kernels can be treated in a similar fashion.
For the general power function, its kernel splitting using $\psic$ can be constructed
as follows:
\be\label{powerfunctionpswfsplitting}
\frac{1}{r^\alpha}  = M_0(r) + \sum_{l=0}^{L-1} D_l(r) + R_L(r), \quad L=0, 1, \ldots, L_{\rm max},
\ee
where
\be\label{powerfunctionkernels}
\ba
M_0(r) &= \frac{\Phi_0(r)}{r^\alpha},\\
D_l(r) &=\frac{\Phi_{l+1}(r)-\Phi_{l}(r)}{r^\alpha},\\
R_L(r) &=\frac{1-\Phi_L(r)}{r^\alpha},
\ea
\ee

\be\label{powerphil}
\Phi_l(r) = \int_0^{r} x^{\alpha-1}\phi_l(x)dx = \frac{1}{c_0} \int_0^{r/\cR_l}x^{\alpha-1}\psic(x)dx,
\ee  
\be
\phi_l(x) = \frac{1}{\cR_l^{\alpha} c_0}\psic\left(\frac{x}{\cR_l}\right),
\quad
c_0 = \int_{0}^1 x^{\alpha-1}\psic(x)dx.
\ee
In order to verify that the above construction satisfies the properties (a)-(c) in
\cref{sec:betterkernelsplitting}, we note that $\psic$ is a smooth, even function.
Substituting the Taylor expansion of $\psic$ into \cref{powerphil}, we obtain
\be
\Phi_l(r)=\int_0^{r/\cR_l}x^{\alpha-1}\sum_{i=0}^\infty C_i x^idx
=\sum_{i=0}^\infty \frac{C_i}{i+\alpha} \left(\frac{r}{\cR_l}\right)^{\alpha+i}.
\ee
Thus, the windowed and difference kernels in \cref{powerfunctionkernels}
are smooth at the origin. The compactness of the residual kernel follows
from the fact that $\Phi_L(r) = 1$ for $r\ge \cR_L$.

\begin{remark}
  The above construction works well for $\alpha\in (0,2]$ in the sense that
    the Fourier expansion length of the windowed and difference kernels defined in
    \cref{powerfunctionkernels} does not differ much from that for the $1/r$ kernel.
    When $\alpha$ increases, it is better to use PSWFs with values of the 
    parameter $c$ different from those listed in \cref{tablepswfparameters}. 
    This is because the parameter values there are chosen specifically for the 
    $1/k^2$ kernel in Fourier space. 
    As $\alpha$ increases, the Fourier transform of $1/r^\alpha$ decreases more slowly.
    We have not carried out any detailed analysis on this.
\end{remark}

\subsubsection{An alternative kernel splitting for the $1/r^2$ kernel in three dimensions}
\label{sl3dkernelsplitting}

The kernel $K(r) = \frac{1}{r^2}$ is the Green's function for the square root of the Laplacian
in three dimensions, with Fourier transform $\widehat{K}(\bk) = 2\pi^2/k$ (see
\cref{powerfunction} with $\alpha=2$ and $d=3$). In this section, we provide an alternative
route to kernel splitting that relies on knowing $\widehat{K}$ rather than
using \cref{powerfunctionpswfsplitting} with $\alpha=2$. The resulting scheme has more or
less the same performance - we introduce it here to show that there are many effective routes
compatible with the \acron framework.
The only subtle issue when manipulating the kernel in the Fourier domain is to
ensure that the difference and residual kernels have the appropriate
compact support in physical space. To be more precise, we write  
\be\label{sl3dpswfsplitting}
\frac{1}{r^2}  = M_0(r) + \sum_{l=0}^{L-1} D_l(r) + R_L(r), \quad L=0, 1, \ldots, L_{\rm max},
\ee
where
\be\label{sl3dpswfkernels}
\ba
M_0(r) &= \frac{1-\psic(r/\cR_0)/\psic(c)}{r^2},\\
D_l(r) &=\frac{\psic(r/\cR_{l})-\psic(r/\cR_{l+1})}{\psic(0)r^2},\\
R_L(r) &=\frac{\psic(r/\cR_L)}{\psic(0)r^2}.
\ea
\ee
The corresponding kernels in Fourier space are similar to those in \cref{newkernels}.
For example,
\be
\hat{D}_l(k)=2\pi^2\frac{\int_0^{k \cR_l/c}\psic(x)dx-\int_0^{kc\cR_{l+1}/c}\psic(x)dx}{c_0 k},
\ee
with $c_0 = \int_{0}^1 \psic(x)dx$.

\subsection{Kernel splitting using Gaussians or PSWFs in Fourier space} \label{fsplitpwsf}

As just noted, it can be convenient to carry out kernel splitting 
in Fourier space rather than physical space, 
especially when the singularity of the kernel is difficult
to account for in physical space, but straightforward to handle in the Fourier representation.
For the constant coefficient Green's functions of classic physics, 
kernel splitting in Fourier space is often very convenient since $\widehat{K}$ is typically
dimension-independent when expressed as a function of $k= |\bk|$.

We illustrate this idea by carrying out kernel splitting for the Yukawa kernel 
$G_{\rm Y}(r)$ using PSWFs. From \cref{yukawakernelfouriertransform}, the Fourier transform
of the Yukawa kernel is $\hat{G}_{\rm Y}(\bk)=\frac{1}{k^2+\lambda^2}$ in any dimension.
From  \cref{generalkernelsplitting} and \cref{gaussiankernel}, we obtain
the following kernel splitting using Gaussians in Fourier space:
\be\label{yukawagaussiankernelsplit}
\hat{G}_{\rm Y}(k)=\frac{1}{k^2+\lambda^2}
=\hat{M}_{0}(k)+\sum_{l=0}^{L-1}\hat{D}_l(k)+\hat{R}_L(k), \quad L=0,1,\ldots,L_{\rm max},
\ee
with
\be\label{yukawagaussiankernels}
\ba
\hat{M}_{0}(k)&=\frac{e^{-(k^2+\lambda^2)\sigma_0^2/4}}{k^2+\lambda^2},\\
\hat{D}_l(k)&=\frac{e^{-(k^2+\lambda^2)\sigma_{l+1}^2/4}-e^{-(k^2+\lambda^2)\sigma_l^2/4}}
    {k^2+\lambda^2},\\
\hat{R}_L(k)&=\frac{1-e^{-(k^2+\lambda^2)\sigma_L^2/4}}
    {k^2+\lambda^2}.
\ea
\ee
where the $\sigma_l$ are defined in \eqref{deltavalue}.
The fact that the difference and residual kernels are compactly supported to precision 
$\epsilon$ follows from the observation that
the Fourier transforms of the difference and residual kernels
can be extended to entire functions of the complexified argument $k$ and the duality
of the Fourier transform~\cite[Theorem 1 on pp. 30]{trefethen2000}.

Using PSWFs instead, we have
\be\label{yukawaPSWFkernelsplit}
\hat{G}_{\rm Y}(k)=\frac{1}{k^2+\lambda^2}
=\hat{M}^c_{0}(k)+\sum_{l=0}^{L-1}\hat{D}^c_l(k)+\hat{R}^c_L(k), \quad L=0,1,\ldots,L_{\rm max},
\ee
with
\be\label{yukawaPSWFkernels}
\ba
\hat{M}^c_{0}(k)&=\frac{\psic\left(\sqrt{k^2+\lambda^2}\cR_0/c\right)}{\psic(0)(k^2+\lambda^2)},\\
\hat{D}^c_l(k)&=\frac{\psic\left(\sqrt{k^2+\lambda^2}\cR_{l+1}/c\right)
  -\psic\left(\sqrt{k^2+\lambda^2}\cR_l/c\right)}
    {\psic(0)(k^2+\lambda^2)},\\
\hat{R}^c_L(k)&=\frac{1-\psic\left(\sqrt{k^2+\lambda^2}\cR_L/c\right)/\psic(0)}
    {(k^2+\lambda^2)}.
\ea
\ee

Since $\psic$ is an even, entire function, the Fourier transforms of the difference and residual
kernels can be extended as entire functions. Thus, these two kernels are 
compactly supported to precision $\epsilon$. 
The kernel splitting in \cref{yukawaPSWFkernelsplit} is independent of
dimension and works for the case $\lambda=0$ as well. In three dimensions, the expression of
$\hat{D}^c_l(k)$ in \cref{yukawaPSWFkernels} reduces to \cref{differencekernelft3}
for the $1/r$ kernel when $\lambda=0$. In two dimensions, it provides an efficient
kernel splitting for $K_0(\lambda r)$ and $\log(r)$ using PSWFs. Note that
it is not so obvious how to account for the logarithmic singularity
if one tries to carry out kernel splitting in physical space directly.

\subsection{Handling singularities in Fourier space}

When we use either Gaussians or PSWFs for kernel splitting, it is easy to 
enforce rapid and controlled decay of the Fourier transform
of the mollified and difference kernels. 
Since the difference kernels are compactly supported in physical space, they  
are smooth in Fourier space (by the Paley-Wiener theorem)
and the trapezoidal rule is spectrally accurate.
That is not the case for the mollified kernel, which may be slowly decaying. In the
nonoscillatory case, this leads to a singularity at the origin.
In the case of the Yukawa kernel, one might expect that the exponential decay leads
to the same result - and rapid convergence using the trapezoidal rule. Even
though both $\hat{M}_{0}^c(k)$ and $\hat{R}^c_L(k)$ in \cref{yukawaPSWFkernels} 
are formally smooth, however, they are nearly singular at the origin when $\lambda$ is small.
The same issue arises with the residual kernel as well, noting that
$\hat{R}^c_L(k)$ is simply the difference between the Fourier transform
of the original kernel and that of the mollified kernel at level $L$. 
Thus, as discussed in the setting of the $1/r$ kernel, it is convenient to construct
a {\em windowed} kernel which matches the mollified kernel over the domain of interest but 
is rapidly attenuated to zero so that a smooth quadrature can be applied in Fourier space. 

We have done this explicitly for the Laplace kernel in three dimensions in
\cref{sec:l3dgaussianks}, where the Fourier transform~\cref{windowedkernelft} of the windowed
kernel is simply the product of a Gaussian and the Fourier transform of the truncated $1/r$
kernel. The approached used there works for all kernel splittings we have discussed so far.
To see this, suppose that
the Fourier transform of the original kernel is $\hat{K}(k)$, and that the
Fourier transform of the mollified kernel is $\hat{G}_l(k) \hat{K}(k)$, where
the corresponding function $G_l(r)$ in physical space is compactly supported
to precision $\epsilon$ in a ball of radius $\cR_l$. Let the windowed kernel be defined by
\be
\widehat{W}_l(k) = \hat{G}_l(k) \hat{T}(k),
\ee
where
\be
\hat{T}(k) = 
\int_{\mathbb{R}^d}e^{-i\bk\cdot\x}K(\x)\chi \left( \frac{|\x|}{((1+\sqrt{d})\cR_l)} \right) \, d\x
=\int_{|\x|\le(1+\sqrt{d})\cR_l}e^{-i\bk\cdot\x}K(\x)d\x.
\ee
Here, $\chi(r)=1$ for $0 \leq r \leq 1$, $\chi(r)=0$ for $r > 1$, and thus, 
$\hat{T}(k)$ is simply the Fourier transform of the truncated kernel introduced in
in \cite{vico2016jcp}.
Then
\be\label{kernelequivalence}
W_l(r) = M_l(r), \quad r\le \sqrt{d}\cR_l.
\ee
In order to show this, we make use of the convolution theorem:
\be
\ba
M_l(\x)&=\int_{\mathbb{R}^d} K(\y) G_l(\x-\y)d\y\\
&=\int_{|\y|\le (1+\sqrt{d})\cR_l}K(\y) G_l(\x-\y)d\y,
\ea
\ee
where we have used the fact that $G_l(\x-\y)=0$ for $\y$ outside the ball of radius
$(1+\sqrt{d})\cR_l$ when $|\x|\le \sqrt{d}\cR_l$, since $G_l(r)$ is compactly supported in a ball
of radius $\cR_l$ to the desired precision. Likewise,
\be
\ba
W_l(\x)&=\int_{\mathbb{R}^d} T(\y) G_l(\x-\y)d\y\\
&=\int_{\mathbb{R}^d} K(\y)\chi(|\y|/((1+\sqrt{d})\cR_l)) G_l(\x-\y)d\y\\
&=\int_{|\y|\le (1+\sqrt{d})\cR_l} K(\y) G_l(\x-\y)d\x.
\ea
\ee
Since $W_l(r) = M_l(r)$ over the range of interest but compactly supported,
simple trapezoidal quadrature can be applied in Fourier space with spectral accuracy.

\section{Numerical results}

We have implemented the algorithms described above in Fortran. 
The software was compiled using the Intel compiler and linked with the Intel MKL library,
with all experiments run in single-threaded mode on a 3.30GHz Intel(R) Xeon(R) Gold 6234 CPU.
We first consider the \acron approach in the discrete setting, for \eqref{discretesum}.

\subsection{The ``point code": fast transforms using the discrete \acron framework}

We begin with the Laplace kernel $1/r$ in three dimensions, since it is such a well-studied
problem, and there are numerous open source libraries for this task, including the FMM3D and 
PVFMM libraries.

\subsubsection{The 3D Laplace kernel}

In our implementation, we set the subdivision parameter $n_s$ (the maximum number of points 
in a leaf node) to 
$280$ for precisions $\epsilon=10^{-3}, 10^{-6}$ and to $800$ for
$\epsilon=10^{-9}, 10^{-12}$. For the PVFMM,
the multipole expansion order is set to $4, 7, 11, 15$ for $\epsilon=10^{-3},
10^{-6}, 10^{-9}, 10^{-12}$, respectively.
\cref{l3dpunif} shows the total time in seconds when the points are uniformly
distributed in the unit box with the number of points
$N=m \cdot 10^6$ for $m=1,\ldots,8$,
and \cref{l3dpadap} shows the total time in second when the points are distributed
on a sphere of radius $0.45$ centered at the origin.
For low accuracy ($\epsilon=10^{-3}$), the time
for building the tree and sorting the points to create the adaptive data structure
takes roughly one-third of the total time. 

In \cref{l3dpointpps}, we show the average throughput of these schemes. Note that 
the \acron performance is very close to that of the PVFMM, 
while FMM3D is slower by a modest factor. We conjecture that for the $1/r$ kernel, 
a fully optimized scheme that uses features from each of FMM3D, PVFMM and \acron 
may be able to do significantly better.

\begin{remark}
  For direct interactions (as in PVFMM and FMM3D), we make use of 
  SIMD accelerated kernel evaluation for $1/r$ (on a single core),
  using the Scientific computing template library (SCTL)~\cite{sctl}.
\end{remark}

\begin{figure}[!ht]
\centering
\includegraphics[height=40mm]{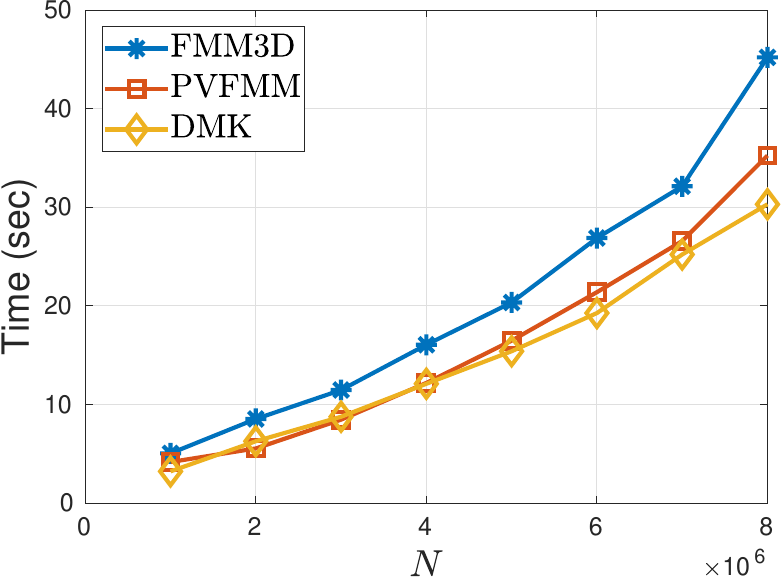}
\hspace{0.4in}
\includegraphics[height=40mm]{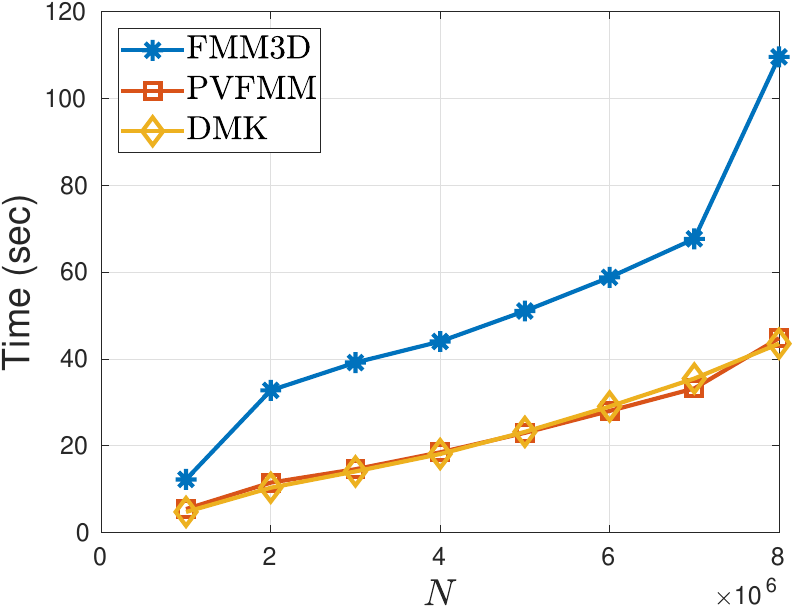}

\vspace{5mm}

\includegraphics[height=40mm]{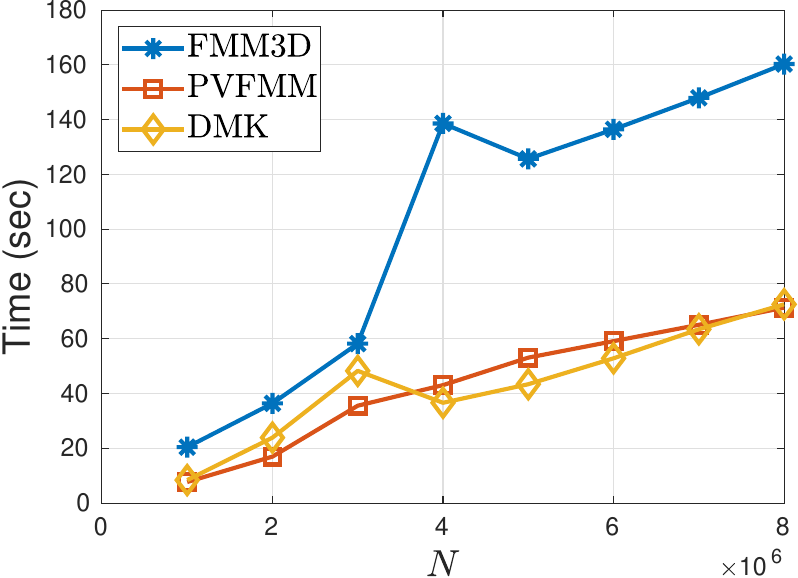}
\hspace{0.4in}
\includegraphics[height=40mm]{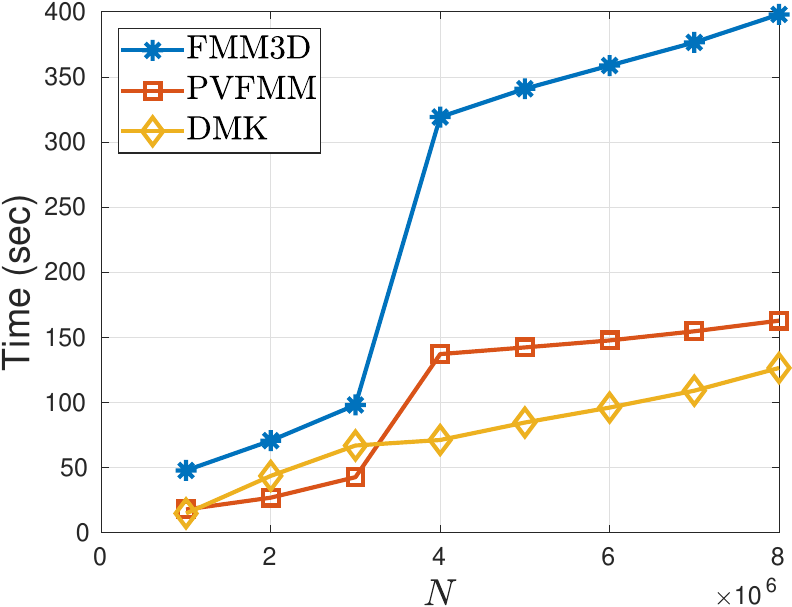}
\caption{\sf Linear scaling of \acron for the 3D Laplace kernel
  and comparison with the FMM3D and PVFMM libraries for various 
  precisions $\epsilon$. In these figures, the points are uniformly distributed
  in the unit box. The $x$-axis indicates the total number of points and
  the $y$-axis the total time in seconds. 
  Top left: $\epsilon=10^{-3}$;  top right: $\epsilon=10^{-6}$.
  Bottom left: $\epsilon=10^{-9}$;  bottom right: $\epsilon=10^{-12}$.}
\label{l3dpunif}
\end{figure}

\begin{figure}[!ht]
\centering
\includegraphics[height=40mm]{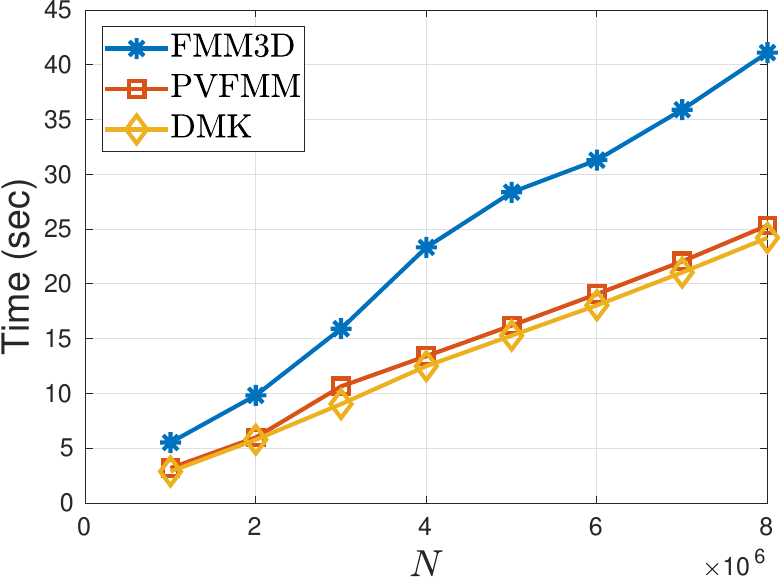}
\hspace{0.4in}
\includegraphics[height=40mm]{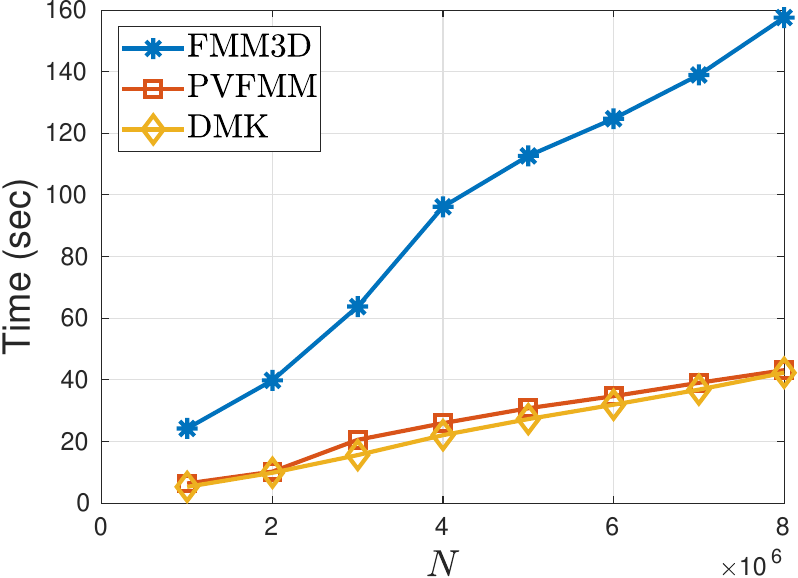}

\vspace{5mm}

\includegraphics[height=40mm]{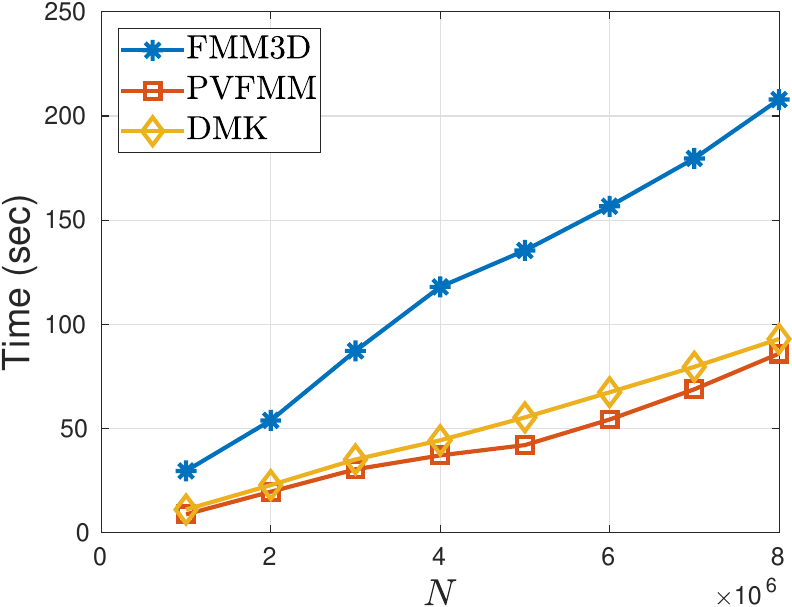}
\hspace{0.4in}
\includegraphics[height=40mm]{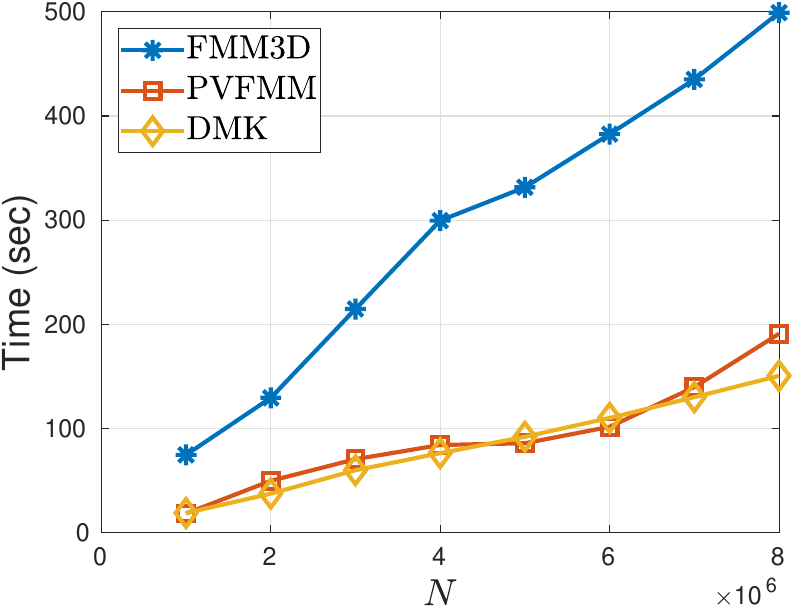}
\caption{\sf Linear scaling of \acron for the 3D Laplace kernel
  and comparison with the FMM3D and PVFMM libraries for various 
  precisions $\epsilon$. In these figures, the points are uniformly distributed
  on a sphere of radius $0.45$. The $x$-axis is the total number of points and
  the $y$-axis the total time in seconds. 
  Top left: $\epsilon=10^{-3}$;  top right: $\epsilon=10^{-6}$.
  Bottom left: $\epsilon=10^{-9}$;  bottom right: $\epsilon=10^{-12}$.}
\label{l3dpadap}
\end{figure}

\begin{figure}[!ht]
\centering
\includegraphics[height=50mm]{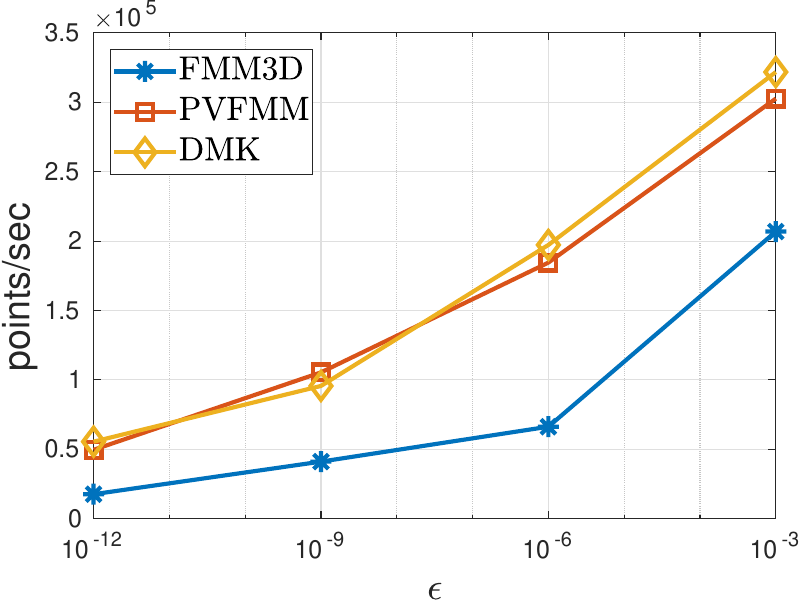}
\caption{\sf Average throughput of \acron for the 3D Laplace kernel
  and comparison with the FMM3D and PVFMM libraries. 
  The $x$-axis indicates the prescribed precision $\epsilon$, and
  the $y$-axis indicates the throughput measured in $10^5$ points per second.} 
\label{l3dpointpps}
\end{figure}

\subsubsection{The 3D kernels for the square-root of the Laplacian and the Yukawa operator}

In three dimensions,
the kernel for the square-root of the Laplacian is $1/r^2$ and for the Yukawa operator
is $e^{-\lambda r}/r$. 
Since SIMD-accelerated evaluation for $1/r^2$ is straightforward, we set
$n_s$ to have the same value as for the 
Laplace kernel. Since the Yukawa kernel is not scale-invariant,
acceleration is more difficult, we reduce the magnitude of $n_s$ to adjust the 
balance between far field and local work. 
We set $n_s = 40, 80, 300, 600$ for $\epsilon = 10^{-3}, 10^{-6}, 10^{-9}, 10^{-12}$, 
respectively. In \Cref{sly3dtimingresults}, we see the linear scaling performance 
of \acron for these two kernels. In \cref{sly3dthroughput}, we show the throughput
in the \acron framework. Note that the throughput for the square-root
of the Laplacian is similar to that for the Laplacian, while the throughput
of the 3D Yukawa kernel is significantly worse -  due almost entirely to the increased
expense of direct kernel evaluation.

\begin{figure}[!ht]
\centering
\includegraphics[height=40mm]{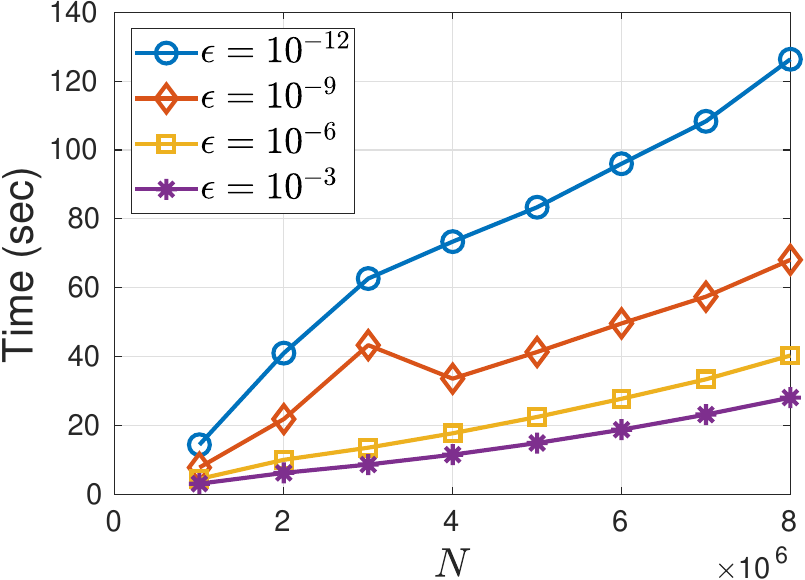}
\hspace{0.4in}
\includegraphics[height=40mm]{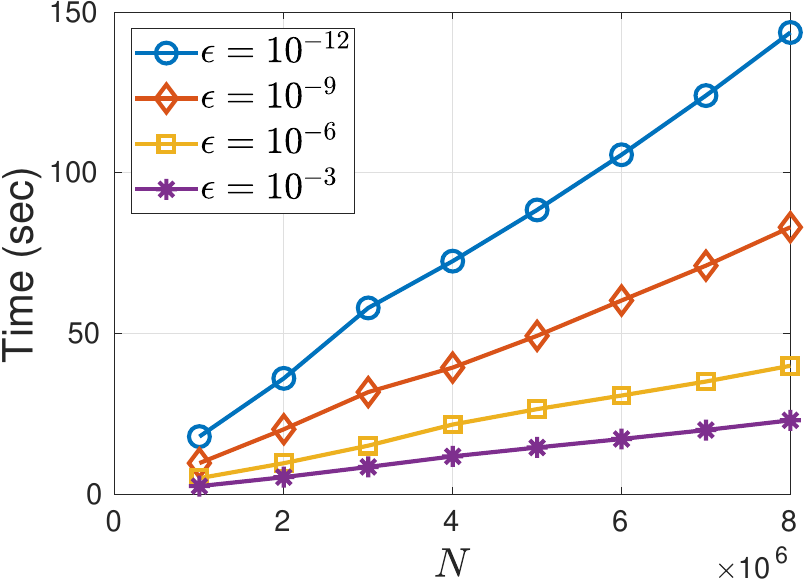}

\vspace{5mm}

\includegraphics[height=40mm]{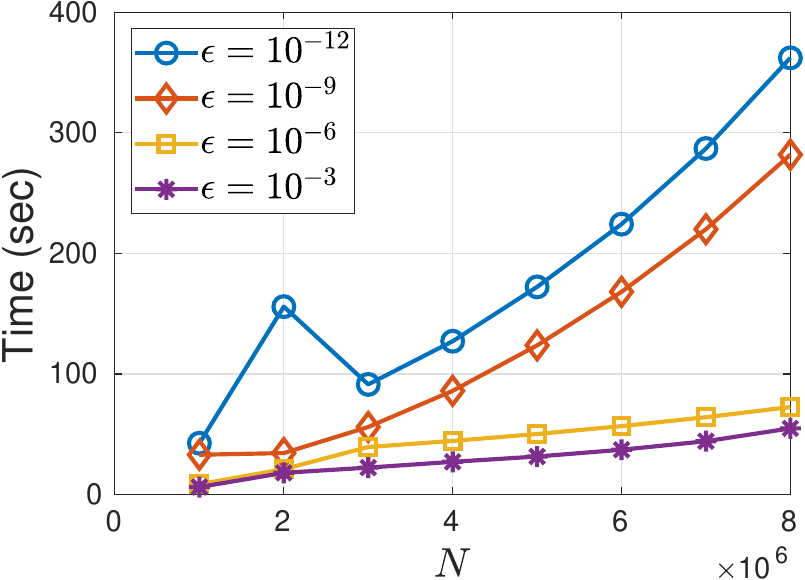}
\hspace{0.4in}
\includegraphics[height=40mm]{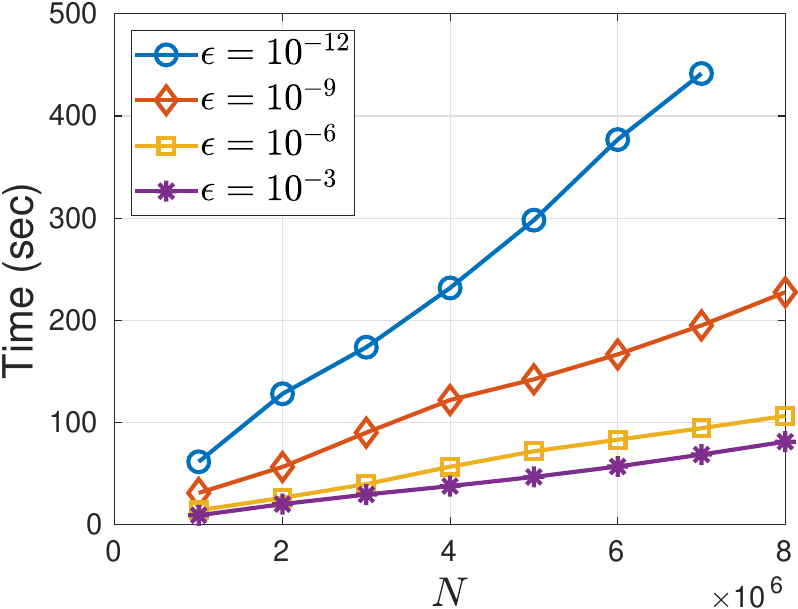}
\caption{\sf Timing results of \acron. Top: the kernel for
  the square-root of the Laplacian in 3D; bottom: the kernel for the Yukawa operator in 3D
  with $\lambda=6$. Left: data for uniform distribution of points; 
  right: data points distributed on a sphere.}
\label{sly3dtimingresults}
\end{figure}

\begin{figure}[!ht]
\centering
\includegraphics[height=40mm]{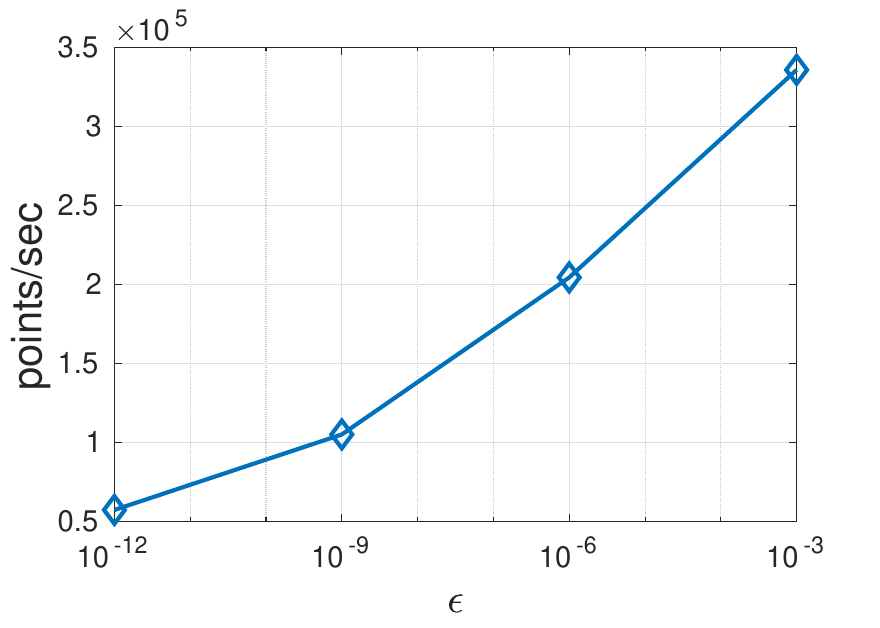}
\hspace{0.4in}
\includegraphics[height=40mm]{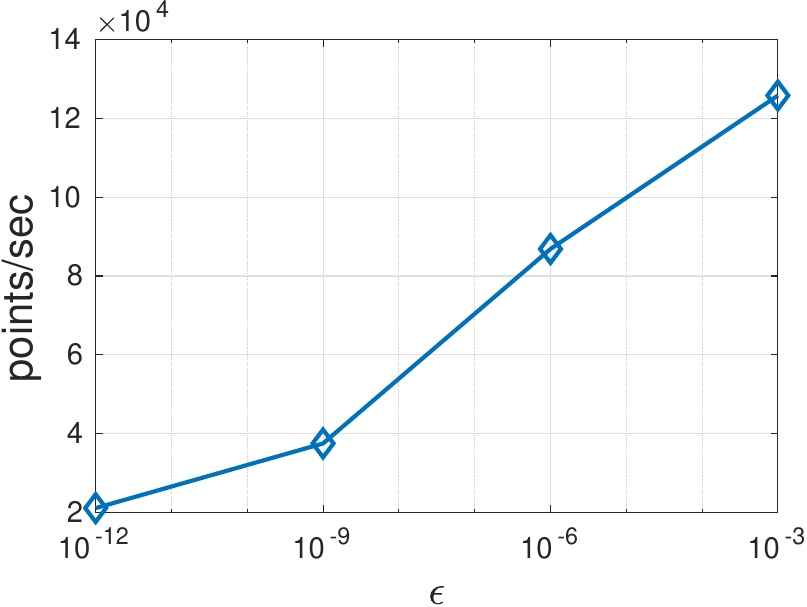}
\caption{\sf Average throughput of \acron. Left: the kernel for
  the square-root of the Laplacian in 3D; right: the kernel for the Yukawa operator in 3D 
  with $\lambda=6$.} 
\label{sly3dthroughput}
\end{figure}

\subsubsection{Kernels for the Laplace operator, 
the square-root of the Laplace operator and the Yukawa operator in 2D}

In two dimensions, the kernels for
Laplacian, the square-root of the Laplacian and the Yukawa operator are $\log(r)$, 
$1/r$, and $K_0(\lambda r)$, respectively. Both $\log(r)$ and $1/r$
easy to rescale and admit straightforward SIMD-accelerated fast kernel evaluation.
In our implementation, we set
$n_s = 120, 120, 160, 160$ for $\epsilon = 10^{-3}, 10^{-6}, 10^{-9}, 10^{-12}$, respectively.
Since the 2D Yukawa kernel is not scale-invariant, we have not yet developed a
SIMD-accelerated kernel routine. Thus, we set $n_s = 30, 30, 45, 45$ for
$\epsilon = 10^{-3}, 10^{-6}, 10^{-9}, 10^{-12}$, respectively.
In \Cref{2dtimingresults}, we illustrate the linear scaling of \acron
for these three kernels. \cref{2dthroughput} shows the average throughput.

\begin{remark}
  We make use of the fast kernel evaluation routine for $\log(r)$ from the C++ vector
  class library~\cite{vcl}. 
\end{remark}

In both two and three dimensions, the principal reason the performance
for the Yukawa kernel is slower than that for the Laplacian or its square root is that
we lack a SIMD-accelerated direct kernel evaluation routine.
In \cref{timingbreakup}, we show a breakdown of the timing for various
components of the algorithm.
Note that the cost of translating plane-wave representations is basically the same
for all kernels, so long as the tree has the same depth, and that the increase
in the total time required for the Yukawa kernel is due to the ten-fold increase
in the cost of kernel evaluation ($t_{\rm direct}$),
when $n_s$ is set to the same number as for the other two kernels.
By shrinking $n_s$, which cause one more level of refinement,
the total cost for the Yukawa kernel is reduced slightly.

\begin{table}[t]
  \caption{Timing results for various stages of the \acron algorithm for the Laplace kernel, 
    the kernel for the square-root of the Laplacian and the Yukawa kernel
    in two and three dimensions. The number of source points is four million on either a circle
    (2D) or a sphere (3D).
    The requested precision is $\epsilon=10^{-6}$. $n_s$ is the maximum number of particles in a
    leaf box. $t_{\rm tree}$ is the time for constructing
    the level-restricted tree from the particle distribution. $t_{\rm Fourier}$ is the time
    for translating plane-wave expansions. $t_{\rm direct}$ is the time for computing
    direct interactions using the residual kernel. $t_{\rm total}$ is the total computational time.
}
\sisetup{
  tight-spacing=false
}
\centering
\begin{tabular}{lccccc}
\toprule
{Kernel}  &  ${n_s}$ & ${t_{\rm tree}}$ & ${t_{\rm Fourier}}$ & ${t_{\rm direct}}$ & ${t_{\rm total}}$  \\
\midrule
\text{Two dimensions}\\
\midrule
$\log(r)$        & 120   &    4.98  &    0.91    &    2.11  &    8.58  \\
$1/r$            & 120   &    4.99  &    0.91    &    1.48  &    7.96  \\
$K_0(\lambda r)$ & 120   &    5.03  &    0.90    &    22.3  &    28.8  \\
$K_0(\lambda r)$ &  30   &    8.66  &    2.91    &    5.68  &    19.1  \\
\midrule
\text{Three dimensions}\\
\midrule
$1/r$            & 280   &    4.81  &    7.06    &    6.99  &    21.3  \\
$1/r^2$          & 280   &    4.82  &    8.04    &    6.23  &    21.8  \\
$e^{-\lambda r}/r$ &  280   &    5.33  &    7.17    &    63.4  &    78.3  \\
$e^{-\lambda r}/r$ &   80   &    11.0  &    18.3    &    19.6  &    56.4  \\
\bottomrule
\end{tabular}
\label{timingbreakup}
\end{table}

\begin{figure}[!ht]
\centering
\includegraphics[height=40mm]{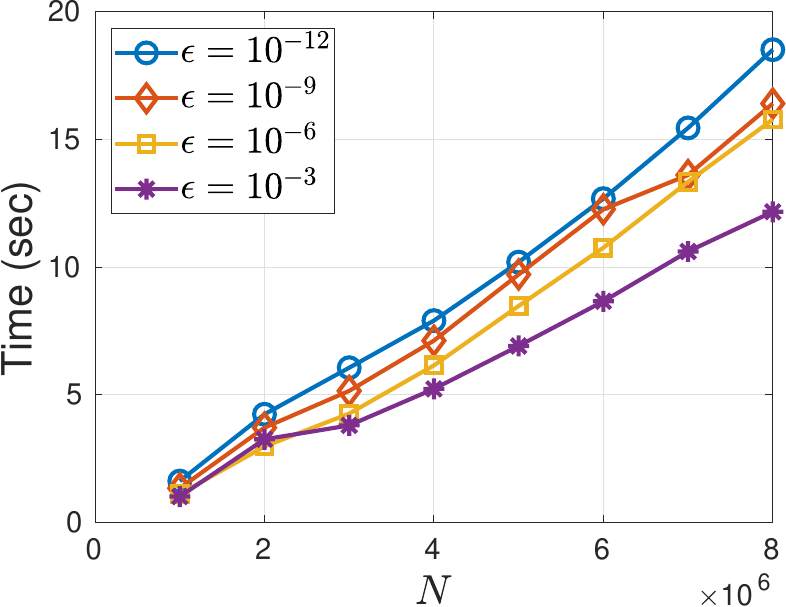}
\hspace{0.4in}
\includegraphics[height=40mm]{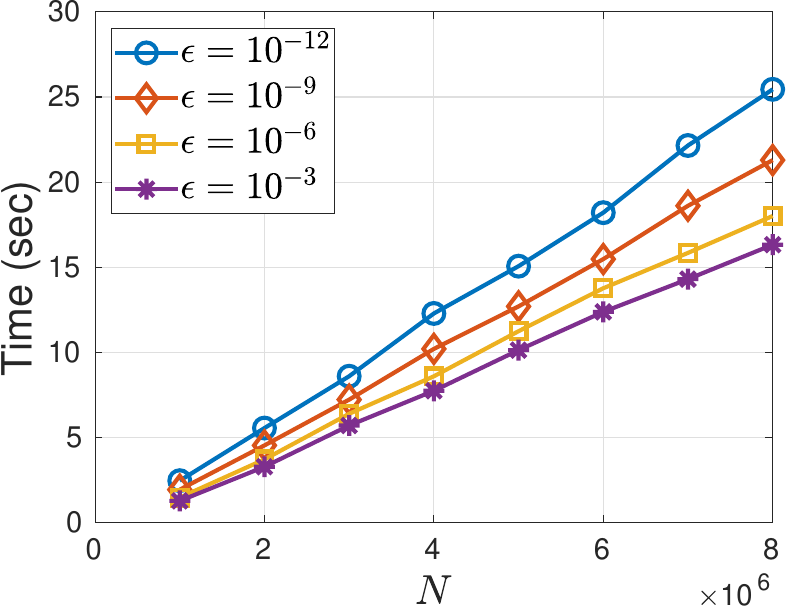}

\vspace{5mm}

\includegraphics[height=40mm]{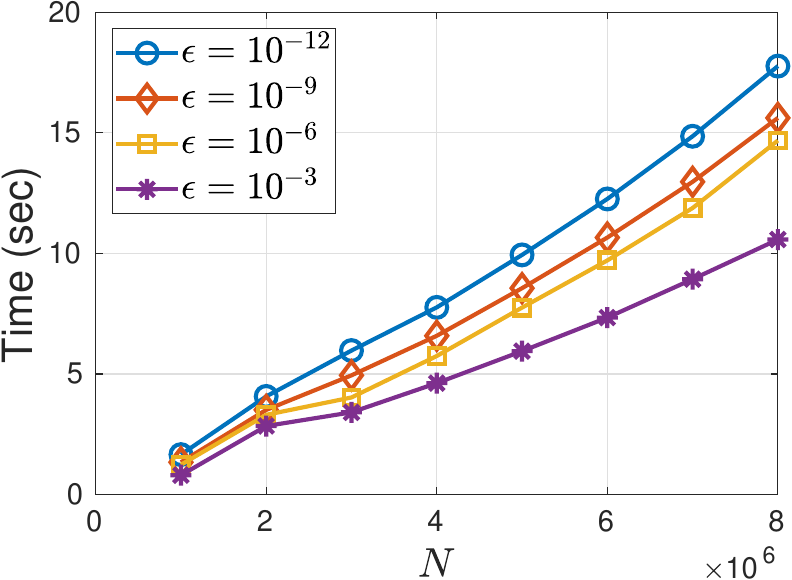}
\hspace{0.4in}
\includegraphics[height=40mm]{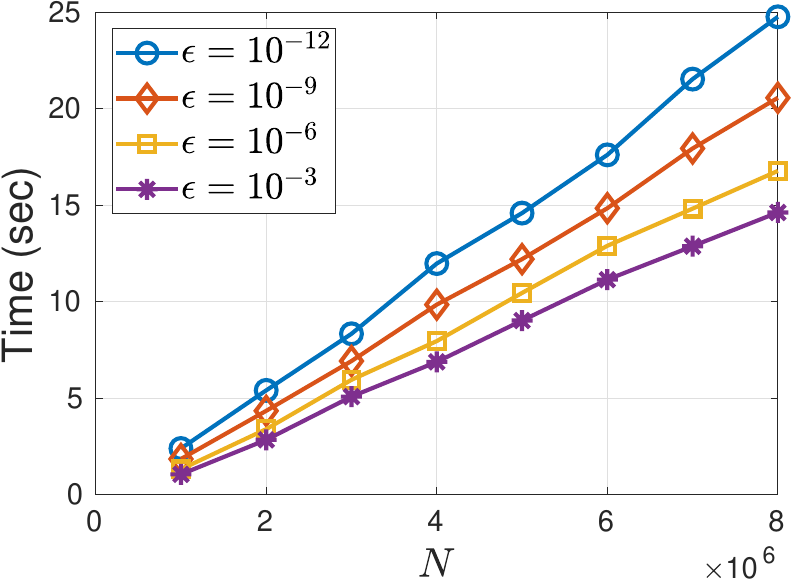}

\vspace{5mm}

\includegraphics[height=40mm]{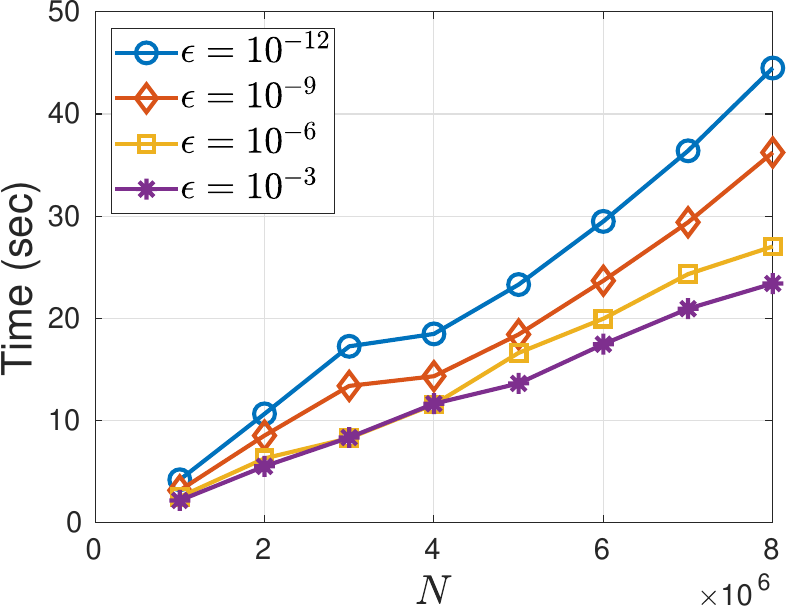}
\hspace{0.4in}
\includegraphics[height=40mm]{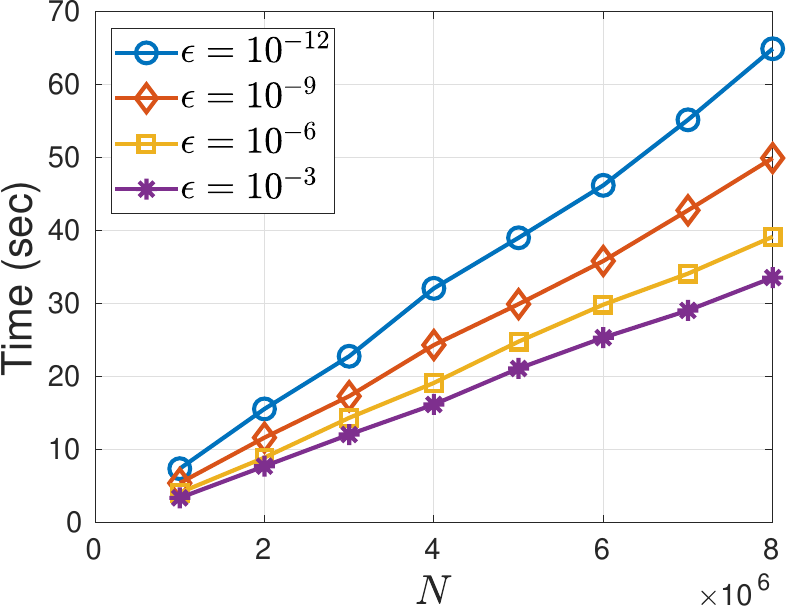}

\caption{\sf Timing results of \acron for various 2D kernels.
  Top: the Laplace kernel; middle: the kernel for
  the square-root of the Laplacian; bottom: the Yukawa kernel with
  $\lambda=6$. Left: uniform distribution; right: 
  points distributed on a circle.}
\label{2dtimingresults}
\end{figure}

\begin{figure}[!ht]
\centering
\includegraphics[height=30mm]{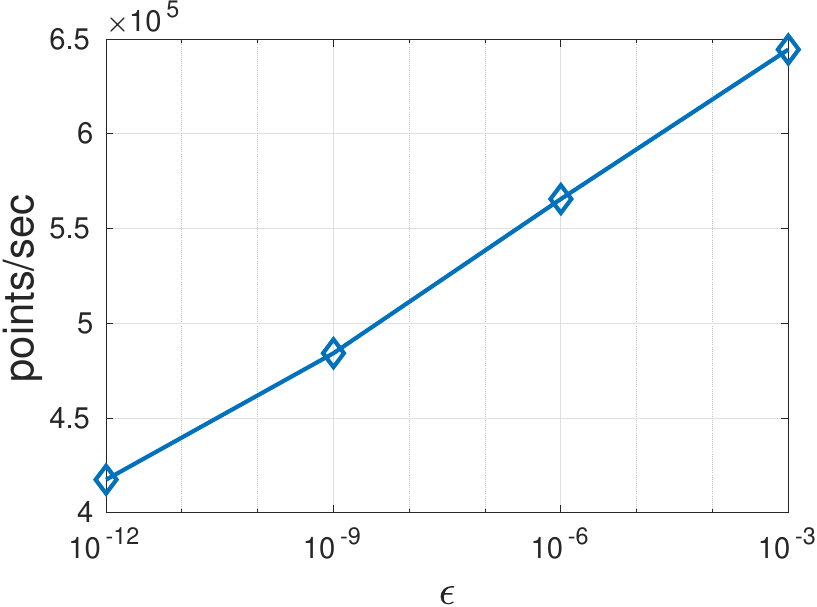}
\hspace{0.1in}
\includegraphics[height=30mm]{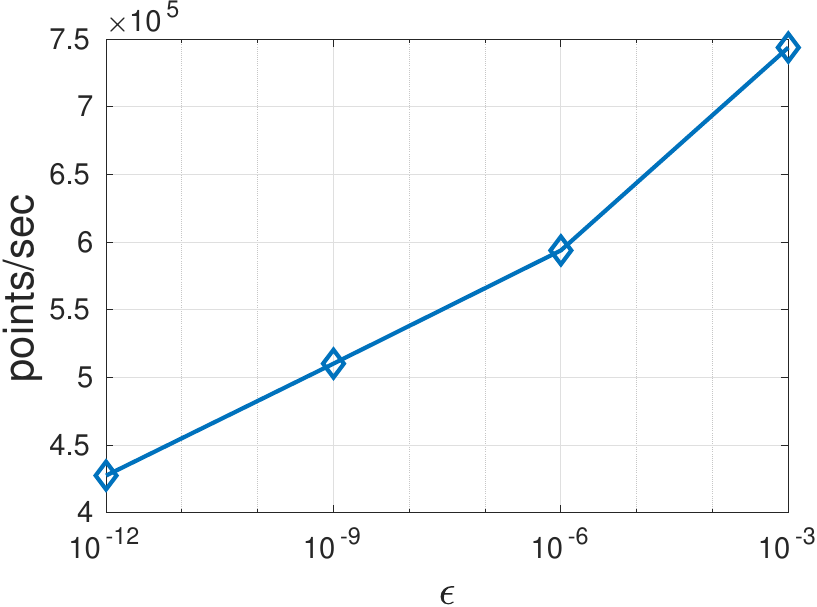}
\hspace{0.1in}
\includegraphics[height=30mm]{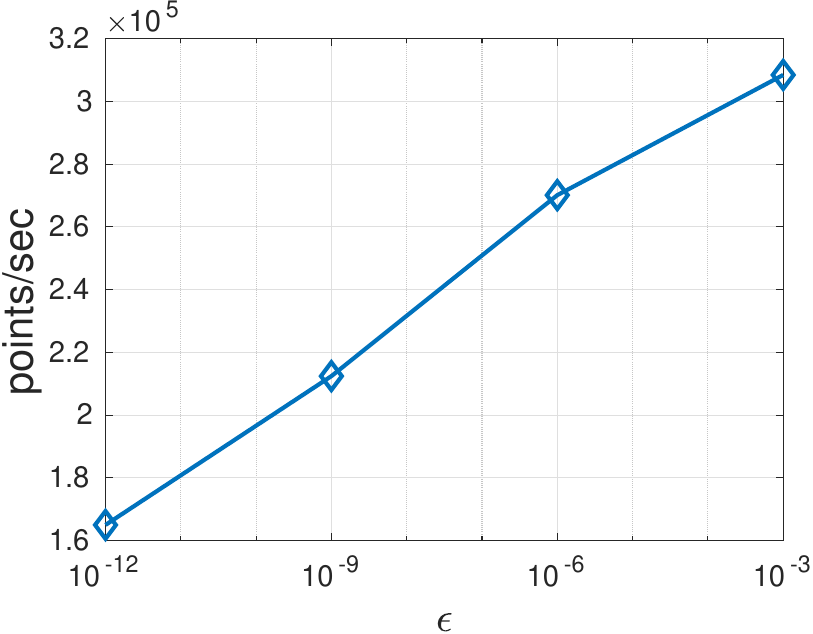}
\caption{\sf Average throughput of \acron for 2D kernels. Left: the
  Laplace kernel; middle: the kernel for the square-root of the Laplacian;
  right: the Yukawa kernel with $\lambda=6$.} 
\label{2dthroughput}
\end{figure}

\subsection{The ``box code": fast transforms using the continuous \acron framework}

The performance of \acron for computing volume integrals of the form 
\eqref{volumepotential} is much less dependent on the specific kernel, once the 
requested precision and polynomial approximation order $k$ for the input density
are fixed.
\Cref{boxcodepps} shows the average throughput
of the algorithm for six kernels: the Laplace kernels in two and three dimensions 
($\log(r)$ and $1/r$), the Yukawa kernels in two the three dimensions
($K_0(\lambda r)$ and $e^{-\lambda r}/r$), and the kernels for the square-root 
of the Laplacian in two and three dimensions
($1/r$ and $1/r^2$). In the two-dimensional setting, we fixed 
$k$ to be $16$. For three dimensions, we set $k=16$ for three and six digits
of accuracy and $k=20$ for nine and twelve digits of accuracy. The throughput of PVFMM
for the 3D Laplace kernel is generated with multipole expansion orders $p=4, 8, 12, 15$
and polynomial expansion orders $k=6, 10, 14, 16$ for $3, 6, 9, 12$ digits of accuracy,
respectively.

For our experiments involving the Laplace and Yukawa kernels in three dimensions, 
we used an analytic solution given as the sum of two Gaussians:
\be\label{uexact3d}
u_{\rm exact}(\x)=\frac{1}{\pi\delta^{d/2}}e^{-|\x-\x_1|^2/\delta}-\frac{1}{2\pi\delta^{d/2}}e^{-|\x-\x_2|^2/\delta}
\ee
with 
$\x_1=(0.1,0.02,0.04)$ and $\x_2=(0.03,-0.1,0.05)$.
We calculate the average throughput with
$\delta=4\cdot 10^{-3}, 10^{-3}, 10^{-4}, 10^{-5}$.
In the two-dimensional case, we used an analytic solution given by
\be\label{uexact2d}
u_{\rm exact}(\x) = e^{-(|\x|/r_0)^\alpha}
\ee
with $r_0=0.25$. The solution drops sharply beyond the circle of radius $r_0$
(see, for example, \cite{biros2015cicp,pvfmm}).
We calculate the average throughput over the parameter values $\alpha=60, 96, 110, 180$.
The input density is computed explicitly as $\rho(\x)=\Delta u_{\rm exact}(\x)$
and $\rho(\x)=(\Delta+\lambda^2) u_{\rm exact}(\x)$ for the Laplace and
Yukawa kernels, respectively. The adaptive tree is constructed by querying for values of
$\rho(\x)$ until it is resolved to the desired precision.

For the kernels corresponding to the square-root of the Laplacian, the input density
is assumed to be the sum of $40$ Gaussians with centers at $40$ equispaced points
on the circle of radius $0.15$ in two dimensions and variance $\delta$.
The average throughput is calculated over parameter values
$\delta=10^{-5}, 10^{-5}/3, 10^{-5}/9, 10^{-5}/27$.
In three dimensions, the input density is given by \cref{uexact3d}.
A reference solution is then calculated via the integral representation
\be
\frac{1}{r^\alpha}=\frac{1}{\Gamma(\alpha/2)}\int_{-\infty}^{\infty} e^{-r^2 e^t+\alpha t/2} dt
\ee
and the fact that the convolution of two Gaussians is another Gaussian
\be
\int_{\mathbb{R}^d} e^{-a|\x-\y|^2}e^{-b|\y|^2}d\y = \left(\frac{\pi}{a+b}\right)^{d/2}e^{-ab|\x|^2/(a+b)}.
\ee
This leads to
\be\label{uexactsl}
\int_{\mathbb{R}^d} \frac{e^{-|\y-\y_0|^2/\delta}}{|\x-\y|^\alpha}
d\y =\frac{1}{\Gamma(\alpha/2)}
\int_{-\infty}^{\infty} \left(\frac{\pi}{e^t+1/\delta}\right)^{d/2}
e^{-|\x-\y_0|^2/(e^{-t}+\delta)+\alpha t/2} dt.
\ee
Finally, \cref{uexactsl} can be evaluated numerically via the truncated trapezoidal rule,
since the integrand decays exponentially to zero at $\pm \infty$.

\begin{figure}[!ht]
\centering
\includegraphics[height=40mm]{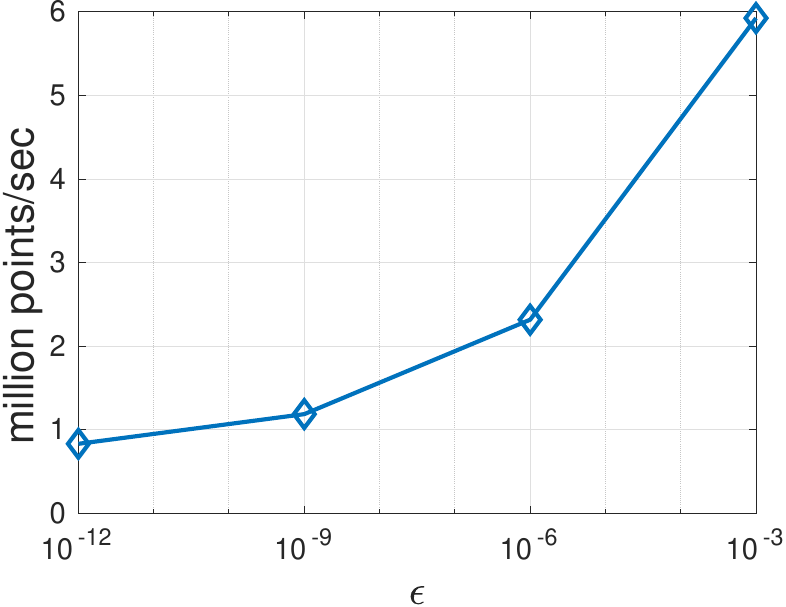}
\hspace{0.4in}
\includegraphics[height=40mm]{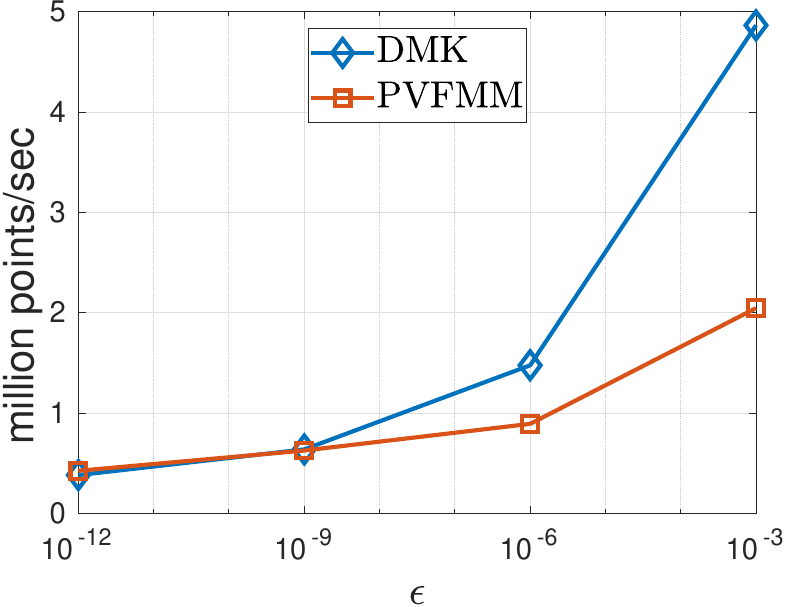}

\vspace{5mm}

\includegraphics[height=40mm]{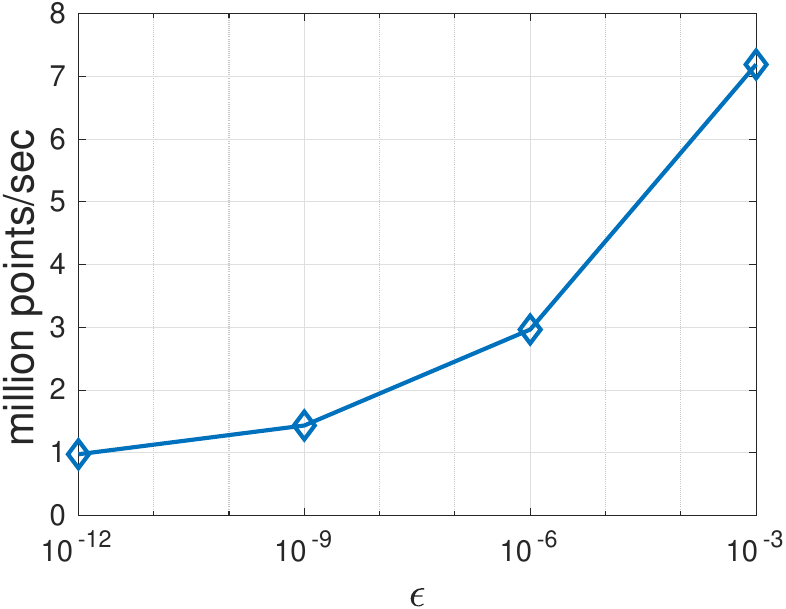}
\hspace{0.4in}
\includegraphics[height=40mm]{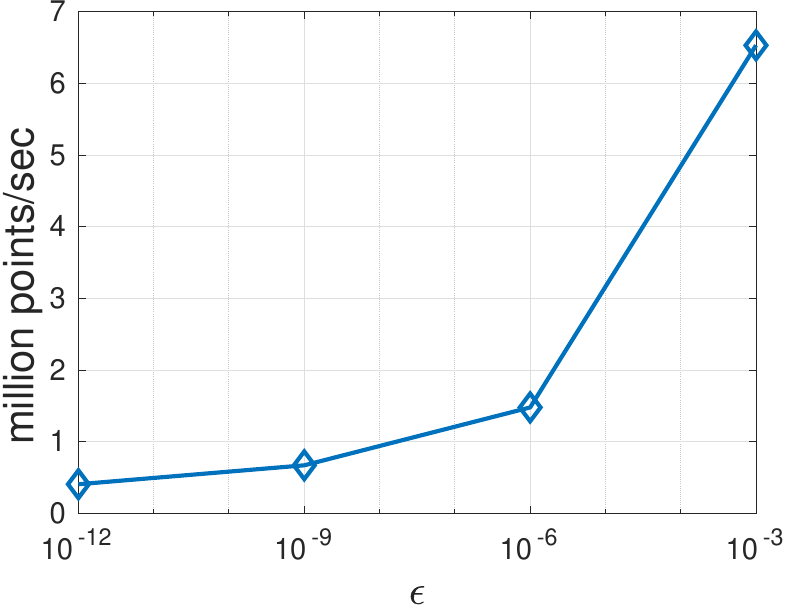}

\vspace{5mm}

\includegraphics[height=40mm]{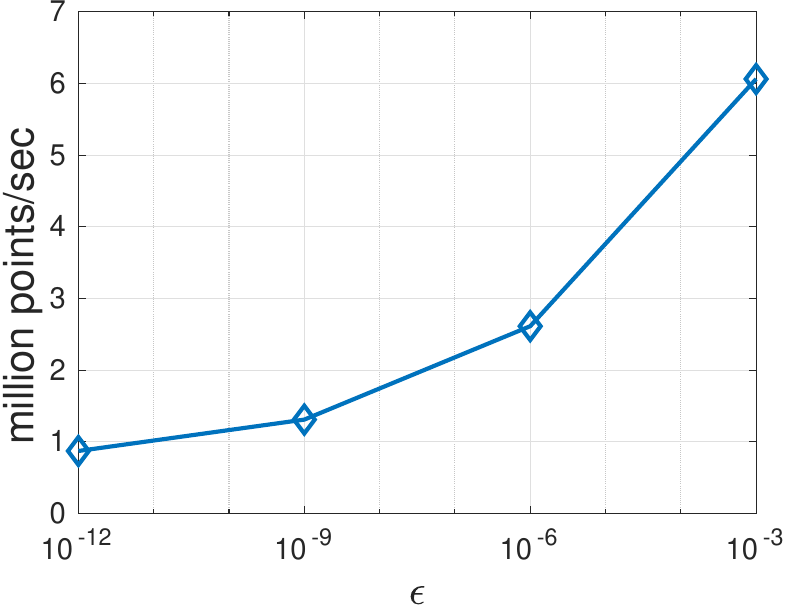}
\hspace{0.4in}
\includegraphics[height=40mm]{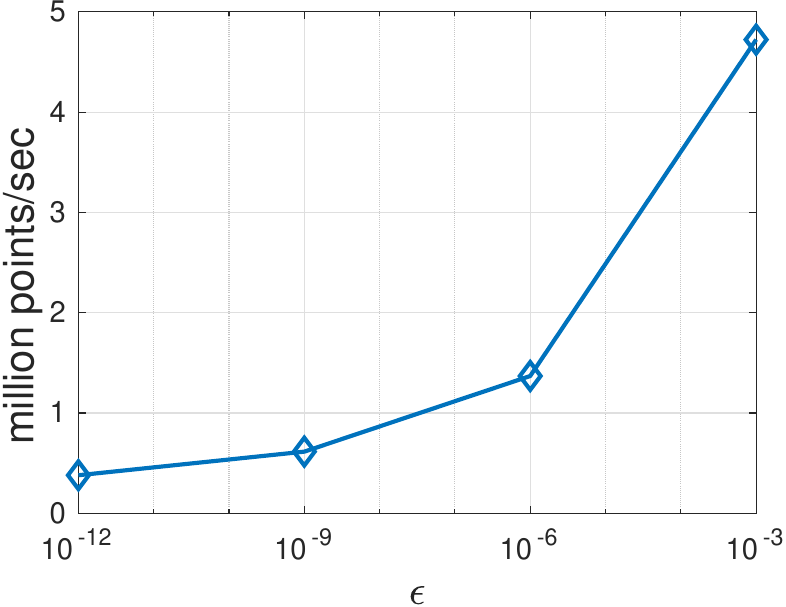}

\caption{\sf Throughput of \acron for various kernels with continuous source
  distributions
  in two and three dimensions. The $x$-axis is the prescribed precision, and
  the $y$-axis is the throughput measured in million points per second. Top: the Laplace
  kernel; middle: the Yukawa kernel with $\lambda=6$; bottom: the square-root Laplace
  kernel. Timings for 2D are on the left. Timings for 3D are on the right.}
\label{boxcodepps}
\end{figure}

The reason that the performance of the box code has such a weak dependence
on the kernel itself is that the direct interactions are accelerated by 
the sum-of-Gaussians approximation of the kernel as well as asymptotic analysis.
Thus the cost of near-field interactions does not depend 
on the cost of evaluating the kernel itself. Furthermore, the cost of direct interactions
goes down dramatically because of separation of variables. (It is also much more suitable
for low-level code optimization across heterogeneous computer architectures.)

\begin{remark}
We have used the sum-of-Gaussians approximation in our
current implementation of the box code, due to the fact that the SOG approximation 
is easily computed on the fly. Further speedup could be obtained
if we carried out kernel-splitting using PSWFs, and used the SOG approximation only 
for the residual kernel. 
We estimate that the throughput would increase by a factor of $1.5-2$.
\end{remark}

\section{Conclusions}

We have presented a new class of methods for the development of discrete and continuous
fast transforms with translation invariant kernels.
It draws from a wide variety of existing schemes: Ewald summation (which exploits
Fourier analysis and diagonalization), multilevel summation (which exploits
kernel splitting and hierarchical localization), and fast multipole methods (which
exploit the use of translation operators). 

The \acron framework starts with a hierarchical splitting of 
the given kernel as a sum of windowed, difference,
and residual kernels. In the execution of the method, all relevant interactions
take place within a box and its nearest neighbors at every level of the grid hierarchy.
This turns out to be much simpler than manipulating the ``interaction lists" that are
essential in the FMM to ensure the well-separateness criterion.
While the rank of the far-field
interaction is increased compared to the FMM, the ability to use
diagonal translation via localized Fourier transforms and acceleration of near-field
interactions using tensor product transforms makes the 
\acron framework competitive with the FMM, even for the Laplace kernel in
three dimensions. More striking is the fact that the performance of DMK-based
transforms is more or less independent of the kernel, so long as it is 
smooth away from the origin in Fourier space, especially for continuous sources. 
In that setting, the near neighbor interactions are computed using either
tensor product transforms or asymptotics, with only one-dimensional
transformation matrices needed (which can be computed on the fly).
For discrete transforms, the cost is more variable, because of the need to
compute near field interactions using the residual kernel. We believe that this is an 
area where significant improvements can be made, including better single core SIMD parallelism.

We have focused in this paper on a broad class of  non-oscillatory kernels, only some of
which were Green's functions for an underlying PDE.
For mildly oscillatory kernels, \acron can be applied without significant
modification. In the truly high-frequency regime, however, a fast algorithm
will require some additional tools:
either 
coupling to the high-frequency FMM (as in \cite{wideband2d,wideband3d}) 
or to a butterfly factorization \cite{butterfly_direct,li-2015,oneil-2010}, or possibly by 
a modification of the \acron framework using adaptive quadrature in the Fourier domain,
as in \cite{beylkin2009jcp}.
We considered only free-space interactions here, but it should be clear that,
in the \acron framework singly, doubly or triply periodic conditions are easy to apply 
since they only involve modification of the Fourier representation at the root of the tree
where the windowed kernel is applied (as in the fast Ewald method of \cite{shamshirgar2021jcp}).
In the continuous case, we have restricted ourselves to functions defined on the unit box $B_0$,
rather than a complex domain $\Omega \subset B_0$. Extensions to this case are being 
developed and will be reported at a later date.

Finally, it is worth noting that, using \acron, we represent the dense kernel matrix as
the sum of matrices, which are low rank but dense at coarse levels but high-rank and sparse
at finer levels. This is in contrast with the hierarchical compression of well-separated
blocks of the matrix, which underlies the FMM,
${\mathcal H}$-matrices and skeletonization-based schemes. The latter provide a route, not just
to fast transforms, but to fast matrix inversion.
This suggests the investigation of \acron as an
alternative route to the design of fast solvers as well.

\section*{Acknowledgments}
The authors would like to thank Alex Barnett, Charles Epstein, and Manas Rachh at the
Flatiron Institute for useful discussions. They would particularly like to
thank Libin Lu and Dhairya Malhotra at the Flatiron Institute for their help
in the implementation of the SIMD vectorization of local interactions. Finally,
they would like to thank Philip Greengard at Columbia University
for providing software for the evaluation of PSWFs.

\bibliographystyle{siam}
\bibliography{journalnames,fmm}

\appendix

\section{Mathematical tools}

\subsection{Hankel transform}
The Hankel transform is defined by the formula
\be
\hat{f}_\nu(k) = \int_0^\infty J_\nu(kr)f(r)rdr,
\ee
where
\be
J_\nu(x)=\frac{\left(\frac{1}{2}x\right)^\nu}{\pi^{\frac{1}{2}}\Gamma\left(\nu+\frac{1}{2}\right)}
\int_0^\pi\cos(x\cos\theta)(\sin\theta)^{2\nu}d\theta
\ee
is the Bessel function of the first kind of order $\nu$~\cite[\S10.9.4]{nisthandbook}.
In particular,
\be
J_0(x)=\frac{1}{\pi}\int_0^\pi e^{ix\cos\theta}d\theta, \quad
  J_{\frac{1}{2}}(x)=\sqrt{\frac{2x}{\pi}}\frac{\sin(x)}{x}.
\ee

\subsection{Radially symmetric functions}
Suppose that $f$ is radially symmetric, i.e., $f(\x)=f(r)$.
Then its Fourier transform $\hat{f}$ is also radially symmetric. Indeed, we have
\be
k^{\frac{d-2}{2}}\hat{f}(k) = (2\pi)^{\frac{d}{2}}\int_0^\infty J_{\frac{d-2}{2}}(kr)r^{\frac{d-2}{2}}f(r)rdr.
\ee
That is, the Fourier transform of a radially symmetric function in $\mathbb{R}^d$ can be
computed via the Hankel transform along the radial direction. In two dimensions,
\be
\hat{f}(k) = 2\pi\int_0^\infty J_{0}(kr)f(r)rdr.
\ee
In three dimensions,
\be\label{rbfft3d}
\hat{f}(k) = 4\pi\int_0^\infty \frac{\sin(kr)}{kr}f(r)r^2dr.
\ee

Some relevant Fourier transform pairs are listed below.
\begin{enumerate}[label=(\alph*)]
\item Gaussian in $\mathbb{R}^d$.
  \be\label{gaussiankernel}
  G(\x)=e^{-\delta^2 r^2}, \qquad \hat{G}(\bk) = \left(\frac{\sqrt{\pi}}{\delta}\right)^{d}
  e^{-k^2/(4\delta^2)}.
  \ee
\item Power function in $\mathbb{R}^d$.
  \be\label{powerfunction}
  f(\x)=\frac{1}{r^\alpha}, \qquad \hat{f}(\bk) = (2\pi)^{\frac{d}{2}}
  \frac{2^{\frac{d-\alpha}{2}}\Gamma\left(\frac{d-\alpha}{2}\right)}
       {2^{\frac{\alpha}{2}}\Gamma\left(\frac{\alpha}{2}\right)}
       \frac{1}{k^{d-\alpha}}.
  \ee
\item Green's function for 
  the Laplace operator ($-\Delta$):
  \be
\label{laplaceker}
  \ba
  G_{\rm L}(\x) &= -\frac{1}{2\pi} \log r,
  & \hat{G}_{\rm L}(\bk) = \frac{1}{k^2}, \quad \x\in\mathbb{R}^2, \\
  G_{\rm L}(\x) &= \frac{1}{4\pi} \frac{1}{r},
  & \hat{G}_{\rm L}(\bk) = \frac{1}{k^2}, \quad \x\in\mathbb{R}^3.
  \ea
  \ee
\item Green's function for 
  the Helmholtz operator ($-\Delta-\omega^2$): 
  \be\label{helmholtzker}
  \ba
  G_{\rm H}(\x) &= \frac{i}{4} H_0^{(1)}(\omega r),
  & \hat{G}_{\rm H}(\bk) = \frac{1}{k^2-\omega^2}, \quad \x\in\mathbb{R}^2, \\
  G_{\rm H}(\x) &= \frac{1}{4\pi} \frac{e^{i\omega r}}{r},
  & \hat{G}_{\rm H}(\bk) = \frac{1}{k^2-\omega^2}, \quad \x\in\mathbb{R}^3.
  \ea
  \ee
  Here $H_0^{(1)}$ is the Hankel function of the first kind of order zero.
\item Green's function for 
  the Yukawa operator ($-\Delta+\lambda^2$):
  \be\label{yukawakernelfouriertransform}
  \ba
  G_{\rm Y}(\x) &= \frac{1}{2\pi} K_0(\lambda r),
  & \hat{G}_{\rm Y}(\bk) = \frac{1}{k^2+\lambda^2}, \quad \x\in\mathbb{R}^2, \\
  G_{\rm Y}(\x) &= \frac{1}{4\pi} \frac{e^{-\lambda r}}{r},
  & \hat{G}_{\rm Y}(\bk) = \frac{1}{k^2+\lambda^2}, \quad \x\in\mathbb{R}^3.
  \ea
  \ee
  Here $K_0$ is the modified Bessel function of the second kind of order zero.
\end{enumerate}

\subsection{Fourier transform of radially symmetric functions in three dimensions}
Consider
\be
F(\x) = \frac{\int_0^{r} f(u)du}{r}, \quad \x\in \mathbb{R}^3,
\ee
where $f$ is an even nonnegative function. We also assume that
\be
\lim_{r\rightarrow \infty} \int_0^{r} f(u)du = 0,
\ee
which is true for either the difference kernel or
the truncated mollified kernel discussed above.
Then its Fourier transform $\widehat{F}$ can be calculated using spherical coordinates:
\be
\ba
\widehat{F}(k) &= 
\int_0^\infty \int_0^\pi \int_0^{2\pi} e^{-i|k| r \cos\theta}F(r) r^2 \sin\theta
drd\theta d\phi\\
&=2\pi \int_0^\infty \int_0^\pi e^{-i|k| r\cos\theta} F(r) r^2\sin\theta drd\theta\\
&=\frac{2\pi}{i|k|}\int_0^\infty \left(e^{i|k| r}-e^{-i|k| r}\right)
F(r) r dr\\
&= \frac{2\pi}{i|k|}\int_0^\infty \left(e^{i|k| r}-e^{-i|k| r}\right)
\left(\int_0^r f(u)du\right) dr\\
&= \frac{2\pi}{|k|^2}\int_0^\infty \left(e^{i|k| r}+e^{-i|k| r}\right)
f(r) dr\\
&= \frac{2\pi\hat{f}(k)}{k^2},
\ea
\ee
where the first equality uses the expression of the Fourier transform in spherical
coordinates, the fifth equality follows from integration by parts, and the last
equality follows from the assumption that $f$ is even.

\subsection{Integral representations of kernels}\label{sec:integralrepresentation}

A unified approach to developing a telescoping sum for the kernel of 
interest is to begin with an 
integral representation in terms of exponential functions or
Gaussians. Many kernels are equipped with such classical representations, suitable
for the \acron framework.
Some are listed below.
\begin{enumerate}[label=(\alph*)]
\item The power function \eqref{powerfunction} in $\mathbb{R}^d$
\cite{beylkin2010acha}.
  \be\label{powersog}
  \ba
  \frac{1}{r^\alpha}
  &=\frac{1}{\Gamma\left(\alpha\right)}
  \int_0^\infty e^{-r t} t^{\alpha-1}dt,\\
  &=\frac{1}{\Gamma\left(\alpha/2\right)}
  \int_0^\infty e^{-r^2 t} t^{\alpha/2-1}dt,\\
  &=\frac{2}{\Gamma\left(\alpha/2\right)}
  \int_0^\infty e^{-r^2 t^2} t^{\alpha-1}dt.
  \ea
  \ee
\item Green's function for the Laplace operator \eqref{laplaceker}:
  \be\label{laplacesog}
  \ba
  -\frac{1}{2\pi}\log r&=\frac{1}{4\pi}\int_0^\infty \frac{1}{t}e^{-r^2/(4t)}dt
  = \frac{1}{2\pi}\int_0^\infty e^{-r^2t^2}\frac{dt}{t},\\
  \frac{1}{4\pi r}&=\frac{1}{8\pi^{3/2}}\int_0^\infty \frac{1}{t^{3/2}}e^{-r^2/(4t)}dt
  = \frac{1}{2\pi^{3/2}}\int_0^\infty e^{-r^2t^2}dt.
  \ea
  \ee
Some of these formulas must be interpreted in a distributional sense. The first 
expression in each line can be understood to describe the connection 
between the Laplace kernel and the heat kernel, viewing the 
solution to the Poisson equation as the solution
to the initial value problem for the heat equation with the same forcing term,
as the solution to the heat flow problem reaches its equilibrium state as 
$t \rightarrow \infty$.
\item Green's function for 
  the Yukawa operator \eqref{yukawakernelfouriertransform}, (see
  \cite{bertoglio2012cpc,greengard2018sisc}):
  \be\label{yukawasog}
  \ba
  \frac{1}{2\pi} K_0(\lambda r)
  & = \frac{1}{2\pi}\int_0^\infty e^{-r^2t^2-\frac{\lambda^2}{4t^2}}\frac{dt}{t},
  \quad \x\in\mathbb{R}^2,\\
  \frac{1}{4\pi} \frac{e^{-\lambda r}}{r}
  & = \frac{1}{2\pi^{3/2}}\int_0^\infty e^{-r^2t^2-\frac{\lambda^2}{4t^2}}dt,
  \quad \x\in\mathbb{R}^3.
  \ea
  \ee
\item Green's function for the Helmholtz operator \eqref{helmholtzker}, (see
  \cite{beylkin2009jcp}):
  \be\label{helmholtzsog}
  \ba
  \frac{i}{4} H_0^{(1)}((\kappa+i\lambda) r)
  & = \frac{1}{2\pi}\int_0^\infty e^{-r^2t^2+\frac{(\kappa+i\lambda)^2}{4t^2}}\frac{dt}{t}, \quad \x\in\mathbb{R}^2,\\
  \frac{1}{4\pi} \frac{e^{i(\kappa+i\lambda) r}}{r}
  & = \frac{1}{2\pi^{3/2}}\int_0^\infty e^{-r^2t^2+\frac{(\kappa+i\lambda)^2}{4t^2}}dt, \quad \x\in\mathbb{R}^3.
  \ea
  \ee

\item Inverse multiquadric:
  \be\label{reglarizedkernelsog}
  \frac{1}{\sqrt{r^2+a^2}}
  = \frac{2}{\sqrt{\pi}}\int_0^\infty e^{-(r^2+a^2)t^2}dt.
  \ee
\item Inverse quadratic:
  \be\label{invquadratic}
  \frac{1}{r^2+a^2}=\int_0^\infty e^{-(r^2+a^2)t}dt.
  \ee
\end{enumerate}

\subsection{Prolate spheroidal wave functions of order zero}

Suppose that $c>0$ is a real number.
Prolate spheroidal wave functions (PSWFs) of order zero are eigenfunctions of 
the integral operator
$F_c: L^2[-1,1] \rightarrow L^2[-1,1]$, defined by the formula
\be\label{pswfintegraloperator}
F_c[\phi](x) = \int_{-1}^1 \phi(t)e^{icxt}dt.
\ee
It is known~\cite{osipov2013}
that the eigenfunctions $\psi_0^c$, $\psi_1^c$, $\ldots$ of $F_c$ are purely
real, orthonormal, and complete in $L^2[-1, 1]$. The even--numbered functions are
even; the odd--numbered ones are odd. Each function $\psi_n$ has exactly $n$ simple
roots in $(-1, 1)$. All eigenvalues $\lambda_n$ of $F_c$ are nonzero and simple; the
even--numbered ones are purely real, and the odd--numbered ones are purely imaginary;
in particular, $\lambda_n = i^n |\lambda_n|$, for every integer $n\ge 0$.

The PSWFs provide a natural tool for the study of band-limited functions on an interval.
Indeed, a function $f:\mathbb{R}\rightarrow \mathbb{C}$ is band-limited with band limit $c$
if for all real $x$,
\be
f(x)=\int_{-1}^1 \sigma(t)e^{icxt}dt,
\ee
for some $\sigma\in L^2[-1,1]$. It is known that if
\be
\int_{-1}^1 |\sigma(t)|^2dt=1,
\ee
then
\be\label{pswfoptimalproperty}
\int_{-1}^1 |f(x)|^2dx\le |\lambda_0(c)|^2.
\ee
The equality in \eqref{pswfoptimalproperty} occurs only if $\sigma=\psi_0^c$.

\subsection{Window functions}\label{sec:windownfunctions}
Window functions are widely used in signal processing and numerical analysis. 
In this paper, we focus on two window functions - 
the Gaussian~\eqref{gaussiankernel} and the PSWF
of order zero $\psic$. 
The Gaussian window function has a number of compelling properties. First, it
appear naturally in the integral representations of many other kernels (as seen above).
Second, Gaussian and its Fourier
transform are both smooth and have explicit expressions via elementary functions. 
Third, Gaussians
are the only class of functions whose $d$-dimensional version is simply the tensor product
of its one-dimensional version. 
That is, if $f(\x)=\prod_{i=1}^d f(x_i)$ for $\x\in \mathbb{R}^d$ with
$d\ge 2$, then $f(\x)=e^{-|\x|^2/\sigma}$ for some $\sigma$. Fourth, 
Gaussians satisfy several optimality properties, including minimizing 
the Heisenberg uncertainty for $L^2$ functions (the product of the second moment of 
$f$ and the second moment of $\widehat{f}$ divide by 
the product $\|f\|_2 \, \| \widehat{f}\|_2$)~\cite{dym1975,folland1997jfaa}.
While they work extremely well as window functions in finite precision (ignoring exponentially
small tails), they are well-known to be sub-optimal in terms of computational efficiency.
In particular, the Heisenberg property does not imply that a Gaussian is an optimal
window for band-limited functions. For that, the best choice, in some sense, is
the PSWF of order zero $\psic$.
Indeed, as shown in
\eqref{pswfoptimalproperty}, $\psic$ is the function with support in $[-1,1]$
with minimal $L^2$-norm (energy) outside the frequency interval $[-c,c]$. Moreover, if we
truncate $\psic$ at $\pm c$ and its Fourier transform at $\pm c$, then
\be\label{pswffouriertransform}
\hpsic(k)=\lambda_0 \psic(k/c),
\ee
where $\lambda_0$ is the associated eigenvalue of the operator defined in
\eqref{pswfintegraloperator}. Thus, the truncated $\psic$ is the analog of the Gaussian 
on a finite interval in the sense that the Fourier transform is the 
original function up to suitable scaling.

\begin{remark}
  It has been observed that the Kaiser-Bessel function and the 
  ``exponential of semicircle'' (ES) function are very close
  to $\psic$ in terms of the Fourier expansion length~\cite{barnett2021acha},
  and hence in terms of efficiency of approximation.
  The Kaiser-Bessel function and the ES function have the advantage of having a
  closed form expression, which is not the case for $\psic$. 
  However, the evaluation of all such special functions can be accelerated
  through the use of piecewise polynomial approximation, after which 
  the PSWF is as easy to evaluate as any other window function.
  In practice, we have found that the PSWF is slightly better than the Kaiser-Bessel
  or ES kernels in terms of the discretized Fourier expansion length,
  especially when the required accuracy is not very high. 
  Thus, we use either the Gaussian or the PSWF as window functions in our 
  numerical experiments.
\end{remark}

\end{document}